\theoremstyle{plain}
\newtheorem{theorem}{Theorem}[section]
\newtheorem{lemma}[theorem]{Lemma}
\newtheorem{corollary}[theorem]{Corollary}
\newtheorem{prop}[theorem]{Proposition}
\newtheorem{defn}[theorem]{Definition}
\newtheorem{example}[theorem]{Example}
\newtheorem{remark}[theorem]{Remark}
\numberwithin{equation}{section}
\newcommand{\Spec}{\operatorname{Spec}}
\newcommand{\Proj}{\operatorname{Proj}}
\newcommand{\Image}{\operatorname{Im}}
\newcommand{\twobar}{/\kern-0.5em/}
\newcommand{\threebar}{/\kern-0.5em/\kern-0.5em/}
\newcommand{\Z}{\mathbb{Z}}
\newcommand{\Q}{\mathbb{Q}}
\newcommand{\R}{\mathbb{R}}
\newcommand{\C}{\mathbb{C}}
\newcommand{\bP}{\mathbb{P}}
\newcommand{\into}{\hookrightarrow}
\newcommand{\Hom}{\mathrm{Hom}}
\newcommand{\cM}{\mathcal{M}}
\newcommand{\cO}{\mathcal{O}}
\newcommand{\cL}{\mathcal{L}}
\newcommand{\pt}{\mathrm{pt}}
\newcommand{\Pic}{\mathrm{Pic}}
\newcommand{\isoto}{\xrightarrow{\sim}}
\renewcommand{\Im}{\operatorname{Im}}
\newcommand{\Res}{\operatorname{Res}}
\newcommand{\bfM}{\textbf{M}}
\newcommand{\bfN}{\textbf{N}}
\newcommand{\cN}{\mathcal{N}}
\newcommand{\rK}{\mathrm{K}}
\newcommand{\rT}{\mathrm{T}}
\newcommand{\rA}{\mathrm{A}}
\def\subsubsection{\@startsection{subsubsection}{3}%
	\z@{.5\linespacing\@plus.7\linespacing}{-.5em}%
	{\normalfont\bfseries}}
\title{Elliptic hypertoric varieties}
\author{Naichung Conan Leung}
\address{The Institute of Mathematical Sciences and Department of Mathematics, The Chinese University of Hong Kong, Shatin, Hong Kong}
\email{leung@math.cuhk.edu.hk}
\author{Xiao Zheng}
\address{The Institute of Mathematical Sciences, The Chinese university of Hong Kong, Shatin, Hong Kong}
\email{xiaozh259@gmail.com}
\begin{document}
	
\begin{abstract}
We introduce elliptic hypertoric varieties, which is an elliptic analogue of hypertoric varieties and multiplicative hypertoric varieties. 
We also prove an elliptic version of Hikita conjecture, which relates elliptic (resp. additive and mulitplicative) hypertoric varieties 
to the equivariant elliptic cohomology (resp. ordinary cohomology and K-theory) of their 3d mirror hypertoric varieties.
\end{abstract}
	
	\maketitle
	
	
\section{Introduction} 

Additive hypertoric varieties, or simply hypertoric varieties $\mathfrak{M}%
^{+}$, are the hyperkahler analogues of toric varieties introduced in \cite{BD00}
as hyperkahler quotient of complex vector spaces by tori. Namely, 
\begin{equation*}
	\mathfrak{M}^{+}=\mathfrak{M}^{+}( \alpha ,\beta) =\mathbb{C}%
	^{2n}\threebar_{(\alpha ,\beta)}\rK_{\mathbb{R}}
\end{equation*}%
with $\rK_{\mathbb{R}}$ being a real subtorus of $\rT_{\mathbb{R}}\cong U(1)^{n}$ acting on $\mathbb{C}^{2n}=\left( T^{\ast }\mathbb{C}%
\right) ^{n}$ by the standard quaternionic action with real and complex
moment maps%
\begin{equation*}
(\mu _{\mathbb{R}},\mu _{\mathbb{C}})	: \left( T^{\ast }\mathbb{C}\right) ^{n}  \rightarrow \mathfrak{k}_{\mathbb{R}}^{\vee }\oplus  \mathfrak{k}^{\vee },
\end{equation*}
where $\alpha $ and $\beta $ are real and complex moment map values
respectively. It can also be described as a GIT quotient of the level set $\mu _{\mathbb{C}%
}^{-1}\left( \beta \right) $. 

In recent years, additive hypertoric varieties appear predominantly in the
context of 3-dimensional $\mathcal{N}=4$ SUSY gauge theories. Such a theory
is determined by a pair $\left( G,T^{\ast }\textbf{N}\right) ,$ where $G$ is a
complex reductive group and $\textbf{N}$ is a $G$-representation\footnote{%
	More generally, $T^{\ast }\textbf{N}$ could be replaced by a symplectic $G$%
	-representation.}. Additive hypertoric varieties are the Higgs branches of
such theories when $G$ is abelian. The gauge theory associated to a
general pair $\left( G,T^{\ast }\textbf{N}\right) $ admits two holomorphic symplectic
varieties, called the Higgs branch and the Coulomb branch, which are moduli
spaces of vacua for the theory. The Higgs branch $\mathcal{M}_{H}$ has a
simple description as a the hyperkahler quotient of $T^{\ast} \textbf{N}$ by the
maximal compact subgroup $G_{\mathbb{R}}\subset G$, 
\begin{equation*}
	\mathcal{M}_{H}=T^{\ast }\textbf{N} \threebar G_{\mathbb{R}}.
\end{equation*}%
The Coulomb branch was defined in \cite{Nak15,BFN16} as an affine variety whose
coordinate ring is the equivariant Borel-Moore homology of the variety of triples equipped with a convolution type
product. When $G=\rK$ is abelian, the Coulomb branch is also an additive
hypertoric variety, we have%
\begin{equation*}
	\mathcal{M}_{H}=T^{\ast } \frak{t} \threebar \rK_{\mathbb{R}}\text{
		and }\mathcal{M}_{C}=T^{\ast } \frak{t}^{\vee} \threebar \rA_{%
		\mathbb{R}}^{\vee },
\end{equation*}
where $\rK_{\mathbb{R}}$ and $\rA_{\mathbb{R}}$ are given by a short exact sequence of tori
 \begin{equation*} 
 	1\longrightarrow \rK {\longrightarrow} \rT {\longrightarrow} \rA \longrightarrow 1.
 \end{equation*}

3d mirror symmetry, or symplectic duality, predicts that there is a dual
3-dimensional $\mathcal{N}=4$, SUSY gauge theory which switches the roles
of the Higgs branch and the Coulomb branch, among many other surprising
predictions. Various predictions of 3d mirror symmetry have been verified
for hypertoric varieties \cite{BLPW14,KMP21,MSY20A,MSY20B,SZ20,Zho21,GHM22} etc. This includes the Hikita conjecture
\cite{Hik17} which in its simplest form states that if $X\rightarrow Y$ and $X^{!}\rightarrow Y^{!}$ are
dual pairs of symplectic resolutions, then there is an isomorphism%
\begin{equation*}
	H^{\ast }(
	X)\cong \mathbb{C} [ (Y^!)^{\rA}],
\end{equation*}%
where $\rA$ denotes the maximal torus in the Hamiltonian automorphism group of 
$X$. For hypertoric varieties and Springer resolutions, the equivariant and quantum extension of Hikita's conjecture was proved in \cite{KMP21} relating the quantized coordinate ring of $Y^!$ with the quantum cohomology of $X$. On the other hand, a K-theoretic analogue of \cite{KMP21} was proved in \cite{Zho21}, which relates the quantum K-theory of $\mathcal{M}_{H}$ with the quantized coordinate ring of the $K$-theoretic Coulomb branch $\mathcal{M}_{C}^{\times}$. For abelian gauge theories, $\mathcal{M}_{C}^{\times}$ is a multiplicative hypertoric variety. (The definitions of additive and multiplicative hypertoric varieties are recalled in Appendix \ref{sec:appendix}.)



It is a natural question whether there exists an analogous result for the elliptic cohomology of $X^{!}$, at
least in the hypertoric cases. The purpose of this paper is twofold. First we give a definition of elliptic
hypertoric varieties $\mathfrak{M}^{\tau}$ and describe their basic
properties. Second we establish the following elliptic version of the equivariant Hikita conjecture: 
\begin{theorem} (Collorary \ref{cor:Hikita})
	There is an isomorphism
	\begin{equation*}
		\mathrm{Ell}_{\rK^{\vee}}(\mathfrak{M}^+_v)\cong (\widetilde{\mathfrak{M}_u^{\tau}})^{\rA}.
	\end{equation*}
\end{theorem}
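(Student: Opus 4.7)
The plan is to identify both sides with the same combinatorial scheme over $E^{\rK^{\vee}}$, built from the Gale-dual pair $1\to\rK\to\rT\to\rA\to 1$ (governing the elliptic side) and $1\to\rA^{\vee}\to\rT^{\vee}\to\rK^{\vee}\to 1$ (governing the additive side), with the parameter correspondence $v\leftrightarrow u$ supplied by 3d mirror symmetry for hypertoric pairs.

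\emph{Left side.} I would first give an explicit GKM-style model for $\mathrm{Ell}_{\rK^{\vee}}(\mathfrak{M}^+_v)$. The additive hypertoric variety $\mathfrak{M}^+_v$ is covered by toric affines indexed by the vertices/chambers of the associated hyperplane arrangement, and the residual $\rK^{\vee}$-Hamiltonian action has isolated fixed points parameterized by the same combinatorial data. Using the Grojnowski/Ginzburg–Kapranov–Vasserot picture of equivariant elliptic cohomology, the scheme $\mathrm{Ell}_{\rK^{\vee}}(\mathfrak{M}^+_v)$ is then obtained by gluing copies of $E^{\rK^{\vee}}$ (one per fixed point) along codimension-one subschemes cut out by the weights of $\rK^{\vee}$ at each fixed point. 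The parameter $v$ enters as the choice of stability/polarization selecting which vertices actually appear.

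\emph{Right side.} The elliptic hypertoric variety $\mathfrak{M}_u^{\tau}$ was defined earlier in the paper as an elliptic quotient of a level set of an elliptic moment map (taking values in a line bundle over a power of $E$) by a torus with parameter $u$. The cover $\widetilde{\mathfrak{M}_u^{\tau}}$ carries a residual $\rA$-action, and its $\rA$-fixed locus may be computed directly from the defining equations: only sections/coordinates of $\rA$-weight zero contribute, which forces the fixed scheme to decompose into components indexed by the same combinatorial data as above, now read off via the Gale-dual arrangement.

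\emph{Matching and obstacle.} I would then construct the isomorphism stratum-by-stratum: on each component the identification is essentially tautological once Gale duality and $v\leftrightarrow u$ are fixed, and globally one must check that the gluing/transition data agree. This is the step that should carry the real content of the theorem. In the additive and multiplicative settings, the analogous checks are algebraic identities in polynomial or Laurent-polynomial rings; in the elliptic setting they become genuine theta-function identities for sections of line bundles on products of elliptic curves, and reproducing the hypertoric cocycles from the elliptic reduction is the place where the argument must be handled carefully. Two secondary subtleties I would have to address are: choosing the correct cover $\widetilde{\mathfrak{M}_u^{\tau}}$ so that $(\widetilde{\mathfrak{M}_u^{\tau}})^{\rA}$ sees no spurious gerbe/quotient ambiguity relative to $\mathrm{Ell}_{\rK^{\vee}}(\mathfrak{M}^+_v)$, and verifying that under Gale duality the parameters $v$ and $u$ match on the nose (rather than up to a translation on $E^{\rK^{\vee}}$), so that the Hikita-type isomorphism is canonical rather than just a non-canonical identification of schemes.
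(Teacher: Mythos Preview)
Your outline has the right shape but misses the key technical obstacle on the left side and misidentifies the object on the right side.

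\textbf{Left side: the $\rK^{\vee}$-action is not GKM.} You propose to compute $\mathrm{Ell}_{\rK^{\vee}}(\mathfrak{M}^+_v)$ directly via a GKM-style gluing of copies of $E^{\rK^{\vee}}$ over fixed points. But the paper explicitly notes that the $\rK^{\vee}$-action on $\mathfrak{M}^+_v(\alpha,\beta)$ is \emph{not} GKM: it has infinitely many one-dimensional orbits. The paper's workaround is to enlarge the torus to $\bm{\rK}=\C^{\times}_{\hbar}\times\rK^{\vee}$, for which the action \emph{is} GKM (by Harada--Holm), obtain the explicit circuit presentation
\[
\mathrm{Ell}_{\bm{\rK}}(\mathfrak{M}^+_v)\;=\;\Proj\,\mathrm{R}(E_{\tau,\hbar}\times E_{\tau}^n)\big/\big(\vartheta_S : S\in\mathrm{Circ}(v)\big),
\]
and then prove separately (via equivariant formality lemmas for all subgroups) that $\mathrm{Ell}_{\rK^{\vee}}$ is the fiber product with $\{\hbar=0\}$. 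Your proposal skips this step, and without it the GKM gluing is not justified.

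\textbf{Right side: $\widetilde{\mathfrak{M}_u^{\tau}}$ is not a cover.} You treat $\widetilde{\mathfrak{M}_u^{\tau}}$ as a cover and worry about gerbe ambiguities. In the paper $\widetilde{\mathfrak{M}_u^{\tau}}$ denotes the deformation space $X_{\vartheta}^n\twobar\rK$, i.e.\ the GIT quotient \emph{without} imposing the complex moment-map level. Its $\rA$-fixed locus is computed by taking $\rA$-coinvariants of the invariant ring $\mathrm{R}(X_{\vartheta}^n)^{\rK}$; all nontrivial generators $r^{\lambda}$ die, and the relations $r^{\lambda}r^{-\lambda}=\prod_i\vartheta(x_i)^{|\lambda_i|}$ leave exactly the circuit ideal $\big(\prod_{i\in S}\vartheta(x_i):S\in\mathrm{Circ}(v)\big)$ in $\mathrm{R}(E_{\tau}^n)$.

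\textbf{Matching.} Consequently the ``theta-function identities'' you anticipate are not needed: both sides are literally $\Proj$ of the same ring $\mathrm{R}(E_{\tau}^n)$ modulo the same circuit ideal, and the isomorphism is immediate once the two presentations are in hand. The substantive work is entirely in the two inputs above, not in the comparison.
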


Here the subscripts $u$ and $v$ are combinatorial inputs for a pair of dual hypertoric varieties and   
$\widetilde{\mathfrak{M}^{\tau}_u}$ is a deformation space of the elliptic hypertoric variety $\mathfrak{M}^{\tau}_u$. See Section \ref{sec:Hikita} for the details.

\bigskip 


An additive (resp. multiplicative) hypertoric variety is defined as the
quotient of the $\rT\cong \left( \mathbb{C}^{\times }\right) ^{n}$%
-Hamiltonian (resp. quasi-Hamiltonian) holomorphic symplectic space $\left(
T^{\ast }\mathbb{C}\right) ^{n}$ (resp. $(T^*\C\setminus \{zw\! =\! 1\})^n$) with moment map $\mu _{+}:\left( T^{\ast }\mathbb{C}\right)
^{n}\rightarrow \mathfrak{t}^{\vee}\cong  \mathfrak{t}$ (resp. $\mu
_{\times }:(T^*\C\setminus \{zw\! =\! 1\})^n\rightarrow \rT$)
by a subtorus $\rK$.  In order to define elliptic hypertoric varieties, we first need to construct a two dimensional symplectic manifold $X_{\vartheta }$,
as the elliptic analogue to $T^{\ast }\mathbb{C}$ and $T^*\C\setminus \{zw\! =\! 1\}$, which admits a $\mathbb{C}^{\times }$-symmetry and a morphism%
\begin{equation*}
	\mu :X_{\vartheta }\rightarrow E_{\tau }, 
\end{equation*}%
which serves the purpose of a moment map. 
The definition of the building block $X_{\vartheta }$ is given in Section \ref{sec:X_theta}. The elliptic hypertoric varieties $\mathfrak{M}^{\tau }$ are defined in Section \ref{sec:ell_hypertoric} as the ``elliptic''-Hamiltonian reduction (which we introduce in Section \ref{sec:e_ham}) of $X_{\vartheta }^{n}$ by $\rK$. 
The precise statement and  the proof of elliptic Hikita conjecture in the hypertoric cases will be
given in Section \ref{sec:Hikita}.

\addtocontents{toc}{\protect\setcounter{tocdepth}{1}}

\subsection*{Acknowledgment}
Both authors are very grateful to Michael McBreen for excellent explanations about 3d mirror symmetry, multiplicative hypertoric varieties and Hikita conjecture in numerous discussion sessions.
We would also like to thank Siu-Cheong Lau and Ziming Ma for helpful discussions. The work of N. C. Leung described in this paper was substantially supported by grants from the Research Grants Council of the Hong Kong Special Administrative Region, China (Project No. CUHK14301619 and CUHK14306720) and a direct grant from CUHK (Project No. CUHK4053400).

\section{e-Hamiltonian manifolds and reduction} \label{sec:e_ham}
In this section, we introduce a notion of ``elliptic''-Hamiltonian (e-Hamiltonian) $\rT$-manifolds for which the moment maps are valued in elliptic curves, an important class of complex two-dimensional examples, and the analogue of Hamiltonian reduction. 

\subsection{Moment maps valued in elliptic curves}
Let $\bm{H}$ denote the upper-half plane
\[
\bm{H}=\{\tau\in \C | \Im \tau >0\}.
\]
For $\tau\in \bm{H}$, let $\Gamma=\langle 1,\tau \rangle$  
be the lattice in $\C$ generated by $1$ and $\tau$, and let $E_{\tau}$ denote the elliptic curve
\[
E_{\tau}:=\C/\Gamma=\C^{\times}/q^{\Z}, \qquad q=\exp(2\pi i \tau).
\] 

Suppose $\rT\!=\!(\C^{\times})^n$ acts on a holomorphic symplectic manifold $(X,\omega)$ preserving the symplectic form. We will identify $\frak{t}=\mathrm{Lie}(\rT)$ with $\frak{t}^{\vee}$ via an invariant inner product $\langle -,- \rangle$ on $\frak{t}$. For $\zeta\in\frak{t}$, we denote by $v_{\zeta}$ the vector field on $X$ generated by $\zeta$. Then, the $\rT$-action is Hamiltonian if there is a $\rT$-invariant holomorphic map $\mu_+: X\to \frak{t}$ satisfying
\[
\omega(v_{\zeta},-)=d\langle\mu_+,\zeta\rangle.
\]
On the other hand, the $\rT$-action is ``quasi''-Hamiltonian (cf. \cite{AMM98}), if it is there is a $\rT$-invariant holomorphic map $\mu_{\times}: X\to \rT$ satisfying
\[
\omega(v_{\zeta},-)= \langle \mu_{\times}^{*}\eta, \zeta \rangle,
\]
where 
\[
\eta=\sum_{i=1}^n d\log t_i, \quad (t_1,\ldots,t_n)\in \rT,
\]
is understood as a $\frak{t}$-valued one-form on $\rT$.

Since $\eta$ is invariant under the $q^{\Z^n}$-action on $\rT$ defined by
\[
(q^{k_1},\ldots,q^{k_n})\cdot(t_1,\ldots,t_n)=(q^{k_1}t_1,\ldots,q^{k_n}t_n), 
\]
where $(q^{k_1},\ldots,q^{k_n})\in q^{\Z^n}$, it descends to a $\frak{t}$-valued one form $\eta$ (via an abuse of notation) on $E_{\tau}^n=\rT/q^{\Z^n}$. This leads us to the following definition:

\begin{defn}
\label{defn:elliptic_T_manifold}
	A symplectic $\rT$-manifold $(X,\omega)$ is called $e$-Hamiltonian if 
	there exists an invariant holomorphic map 
	$\mu: M\to E_{\tau}^n$ satisfying
	\begin{equation} \label{eq:ell_moment_map}
	\omega(v_{\zeta},-)= \langle \mu^{*}\eta, \zeta \rangle,
	\end{equation}
	where $\eta\in \Omega^1(E_{\tau}^n,\frak{t})$ is the $\frak{t}$-valued one-form defined above.
\end{defn}

Note that the e-moment map $\mu$ is a part of the definition. For brevity, we will simply refer to $(X,\omega)$ as an e-Hamiltonian $\rT$-manifold or say that the $\rT$-action on $(X,\omega)$ is e-Hamiltonian. 


\subsection{Two dimensional examples} \label{sec:examples}
In this subsection we construct a class of complex two dimensional examples $(X_{\theta},\omega_{\theta})$ of e-Hamiltonian $\rT$-manifolds for $\rT=\C^{\times}$. These examples depend on a global section $\theta$ of holomorphic line bundle $L$ over $E_{\tau}$ and a factorization $L=L_1\otimes L_2$. Topologically, $X_{\theta}$ is the total space of a $\C^{\times}$-fibration over $E_{\tau}$ with fibers degenerating to the union $\C\cup_0 \C$ of two affine lines over the vanishing loci of $\theta$. For a detailed construction of $(X_{\theta},\omega_{\theta})$, let's first recall a few basic facts about holomorphic line bundles over elliptic curves and theta functions (cf. \cite{POL03}).

Let $H:\C^2 \to \C$ be a hermitian form\footnote{$H$ is an $\R$-bilinear form which is $\C$-linear in the first argument and $\C$-antilinear in the second argument.} on $\C$ such that $E:=\Im H$ is integral on $\Gamma^2$. Let $\chi:\Gamma\to U(1)$ be a character twisted by $H$, namely $\chi$ satisfies 
\[
\chi(\gamma+\gamma')=\chi(\gamma)\chi(\gamma')\exp(i\pi  E(\gamma,\gamma')), \qquad \gamma,\gamma'\in \Gamma.
\]
To a pair $(H,\chi)$, we can associate a holomorphic line bundle $L_{(H,\chi)}$ over $E_{\tau}$ whose total space is the quotient of $\C^2$ by the $\Gamma$-action defined by
\[
\gamma\cdot(w,x)=\left(e_{(\chi,H)}(\gamma,x) w   ,x+\gamma  \right),
\]
where $e_{(H,\chi)}:\Gamma\times \C\to \C^{\times}$ is the automorphy factor  
\[
e_{(H,\chi)}(\gamma,x)=\chi(\gamma)\exp\left(\pi H(x,\gamma)+\frac{\pi}{2}H(\gamma,\gamma) \right).
\]
More explicitly, for $\gamma=a+b\tau$, we can take 
\begin{equation} \label{eq:H}
H(x,y)=k\cfrac{x\bar{y}}{\Im \tau},
\end{equation}
and
\begin{equation} \label{eq:chi}
\chi(\gamma)=(-1)^{ab}\exp(2\pi i E(x_0,\gamma))
\end{equation}
where $k=\deg L_{(H,\chi)}$ and $x_0\in \C$. 
We denote by $\mathcal{P}$ to set of all pairs $(H,\chi)$ of the form \eqref{eq:H}, \eqref{eq:chi}. It has an obvious group structure with the multiplication given by
$(H,\chi)\cdot (H',\chi')=(H+H',\chi\chi')$. By the Appell-Humbert theorem, the map $(H,\chi)\mapsto L_{(H,\chi)}$ is an isomorphism $\mathcal{P}\cong \Pic(E_{\tau})$.

For a line bundle $L$ over $E_{\tau}$ given by an automorphy factor $e_L$, the global sections of $L$ are holomorphic functions on $\C$ satisfying 
\[
\theta(x+\gamma)=e_{L}(x,\gamma)\theta(x),
\]
which are exactly theta functions with automorphy factor $e_L$. Moreover, we have
\[
\dim H^0(E_{\tau},L)= \begin{cases}
0 & \text{for } \deg L<0\\
1 & \text{for } \deg L=0\\
\deg L & \text{for } \deg L>0
\end{cases}
\] 
by the Riemann-Roch theorem.

Now, let's fix a line bundle $L$ over $E_{\tau}$ with $\deg \cL> 0$, a nonzero global section $\theta \in H^0(E_{\tau},L)$, and a factorization $L=L_1\otimes L_2$. For simplicity, we will assume their  automorphy factors $e_{L}$, $e_{L_1}$, $e_{L_2}$ are in $\mathcal{P}$. The
total space of the vector bundle $L_1\oplus L_2$ over $E_{\tau}$ can be obtained as 
the quotient of $\C^3$ by the action of $\Gamma$ defined as
\begin{equation} \label{eq:gamma_action}
\gamma\cdot (z,w,x)=(e_{L_1}(\gamma,x)z,e_{L_2}(\gamma,x)w,x+\gamma), 
\end{equation}
where $(z,w,x)$ are coordinates on $\C^3$.

We define a hypersurface $X_{\theta}$ in $\mathrm{Tot}(L_1\oplus L_2)$ by 
\[
X_{\theta}=\widetilde{X}_{\theta}/\Gamma,
\]
where $\widetilde{X}_{\theta}$ is the $\Gamma$-invariant hypersurface in $\C^3$ given by 
\begin{equation} \label{eq:X_tilde}
\widetilde{X}_{\theta}=\{(z,w,x)\in \C^3|zw=\theta(x)\}. 
\end{equation}
We note that $X_{\theta}$ has at worst orbifold singularities. We will assume $X_{\theta}$ is smooth, this amounts to $\theta$ having no repeated zeroes, which is the case for generic choices of $\theta$. We equip $X_{\theta}$ with the symplectic form $\omega_{\theta}$ given by the $\Gamma$-invariant symplectic form $\widetilde{\omega}_{\theta}$ on 
$\widetilde{X}_{\theta}$
\begin{equation} \label{eq:omega_tilde}
\widetilde{\omega}_{\theta}=\Res \cfrac{dz\wedge dw\wedge dx}{zw-\theta(x)}.
\end{equation}
The $\C^{\times}$-action on $\widetilde{X}_{\theta}$ defined by
\begin{equation} \label{eq:action}
t\cdot (z,w,x)=(tz,t^{-1}w,x), \quad t\in \C^{\times}.
\end{equation} 
is Hamiltonian with respect to $\widetilde{\omega}_{\theta}$ and commutes with the $\Gamma$-action. The moment map $\widetilde{\mu}_{\theta}:\widetilde{X}_{\theta} \to \C$ satisfies
\[
d\widetilde{\mu}_{\theta} = \Res d\log(zw-\theta(x))\wedge dx = dx.
\]
Thus, $\widetilde{\mu}_{\theta}$ is the projection to the $x$-plane (up to an additive constant, which we choose to be zero). 
By construction, \eqref{eq:action} defines a $\C^{\times}$-action on $X_{\theta}$ which preserves $\omega_{\theta}$, and $\widetilde{\mu}_{\theta}$ descends to the projection map $\mu_{\theta}:X_{\theta}\to E_{\tau}$.  It is easy to see that $(X_{\theta},\omega_{\theta})$ 
satisfies the definition of an e-Hamiltonian $\C^{\times}$-manifold and $\mu_{\theta,L}$ is the e-moment map. We note swapping $L_1$ and $L_2$ in the above construction will result in the same holomorphic symplectic manifold with the opposite $\C^{\times}$-action. 


Implicit in the above construction is a more general recipe for producing e-Hamiltonian $\rT$-manifolds from Hamiltonian $\rT$-manifolds for $\rT\!=\!(\C^{\times})^n$, 
i.e.

\begin{prop} \label{prop:X_tilde}
	Let $(\widetilde{X}, \widetilde{\omega}) $ be a Hamiltomnian $\rT$-manifold.  
	Suppose we have a holomorphic, free, proper action of $\Gamma^n$ on $\widetilde{X}$ which commute with the $\rT$-action and preserves $\widetilde{\omega}$.
	 Suppose further that the moment map $\widetilde{\mu}:\widetilde{X}\to \C^n$ intertwines the $\Gamma^n$-action on $\widetilde{X}$ and the natural $\Gamma^n$-action on $\C^n$. Then, $X=\widetilde{X}/\Gamma^n$ equipped with the symplectic from $\omega$ induced from $\widetilde{\omega}$ is an e-Hamiltonian $\rT$-manifold, and $\widetilde{\mu}$ descends to an e-moment map $\mu:X\to E_{\tau}^n$.
\end{prop}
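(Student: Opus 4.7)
The argument is one of descent along the quotient map $p:\widetilde{X}\to X:=\widetilde{X}/\Gamma^n$.

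First, since $\Gamma^n$ acts holomorphically, freely and properly on $\widetilde{X}$, the quotient $X$ inherits a holomorphic manifold structure for which $p$ is a local biholomorphism, and the $\Gamma^n$-invariant form $\widetilde{\omega}$ descends to a holomorphic symplectic form $\omega$ on $X$ characterized by $p^*\omega=\widetilde{\omega}$. Because the $\rT$- and $\Gamma^n$-actions on $\widetilde{X}$ commute, the $\rT$-action descends to a holomorphic $\omega$-preserving action on $X$. By the intertwining hypothesis, $\widetilde{\mu}$ maps each $\Gamma^n$-orbit in $\widetilde{X}$ into a single $\Gamma^n$-orbit in $\C^n$, hence descends to a holomorphic $\rT$-invariant map $\mu:X\to\C^n/\Gamma^n=E_\tau^n$.

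It remains to verify the e-moment map identity \eqref{eq:ell_moment_map}. Since $p$ is a local biholomorphism, it is enough to check the identity after pulling back along $p$. Writing $v_\zeta$ ambiguously for the fundamental vector field on $\widetilde{X}$ or on $X$ generated by $\zeta\in\frak{t}$, the left-hand side pulls back to $\widetilde{\omega}(v_\zeta,-)$, which by the Hamiltonian hypothesis on $\widetilde{X}$ equals $d\langle\widetilde{\mu},\zeta\rangle$. The right-hand side pulls back to $\langle\widetilde{\mu}^*\pi^*\eta,\zeta\rangle$, where $\pi:\C^n\to E_\tau^n$ is the covering. Thus the identity on $X$ reduces to the identification of $\pi^*\eta$ with the tautological $\frak{t}$-valued $1$-form $\sum_i dx_i\otimes e_i$ on $\C^n$; once this is in hand, $\widetilde{\mu}^*\pi^*\eta=d\widetilde{\mu}$ as $\frak{t}$-valued forms and the two sides coincide.

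To confirm the identification, I would unpack the definition of $\eta$: it is defined on the multiplicative torus $\rT=(\C^\times)^n$ as $\sum_i d\log t_i$, descends to $E_\tau^n=\rT/q^{\Z^n}$, and pulls back along the exponential covering $\C^n\to\rT$, $x_i\mapsto e^{2\pi i x_i}$, to $2\pi i\sum_i dx_i$, matching the tautological form up to a universal constant absorbed in the normalization of the invariant pairing on $\frak{t}$ (equivalently, of $\widetilde{\mu}$). The main obstacle is therefore purely one of bookkeeping: keeping the additive cover $\C^n\to E_\tau^n$ and the multiplicative cover $\rT\to E_\tau^n$ synchronized under the exponential map, and choosing the normalizations of $\widetilde{\mu}$ and of the pairing on $\frak{t}$ compatibly with the convention adopted for $\eta$. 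Once conventions are fixed, the rest of the argument is automatic descent.
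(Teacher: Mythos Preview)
Your proposal is correct and follows essentially the same descent argument as the paper: both reduce the e-moment map identity on $X$ to the Hamiltonian identity on $\widetilde{X}$ via pullback along the quotient, then identify $\langle\eta,\zeta\rangle$ on $E_\tau^n$ with the form $\sum_i\zeta_i\,dx_i$ on $\C^n$. The only difference is that you explicitly flag the $2\pi i$ normalization arising from $t_i=\exp(2\pi i x_i)$, whereas the paper simply writes $\sum_i\zeta_i\,dx_i=\sum_i\zeta_i\,d\log t_i$ without comment; your caution here is warranted but does not change the substance of the proof.
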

\begin{proof}
Since $(X,\omega)$ is a symplectic $\rT$-manifold, it suffices to check that $\mu$ is an e-moment map, i.e., it satisfies \eqref{eq:ell_moment_map}. In coordinates, we have
	\[
	\widetilde{\omega}(\widetilde{v}_{\zeta},-)=\widetilde{\mu}^*\left (\sum_{i=1}^n \zeta_i dx_i \right).
	\]
where $\widetilde{v}_{\zeta}$ is the vector field on $\widetilde{X}$ generated by $\zeta\in\frak{t}$, and $x_1,\ldots,x_n$ are coordinates on $\frak{t}=\C^n$. Since the actions of $\rT$ and $\Gamma^n$ commute, $\widetilde{v}_{\zeta}$ descends to vector field $v_{\zeta}$ on $X$ generated by $\zeta$. Moreover, $\sum_{i=1}^n \zeta_i dx_i$ is $\Gamma^n$-invariant and defines a $1$-form on $E_{\tau}^n$. 
This means $\mu$ satisfies
\[
\omega(v_{\zeta},-)=\mu^*\left(\sum_{i=1}^n \zeta_i dx_i\right),
\]
which is exactly \eqref{eq:ell_moment_map} since we have 
\[
\sum_{i=1}^n \zeta_i dx_i=\sum_{i=1}^n \zeta_i d\log t_i=\langle \eta,\zeta\rangle, \qquad (t_1,\ldots,t_n)\in \rT.
\] 
\end{proof}

Similarly, we can construct e-Hamiltonian $\rT$-manifolds from quasi-Hamiltonian $\rT$-manifolds:
\begin{prop} \label{prop:X_hat}
	Let $(\hat{X}, \hat{\omega}) $ be a quasi-Hamiltomnian $\rT$-manifold. 
	Suppose we have a holomorphic, free, proper action of $\Z^n$ on $\hat{X}$ which commute with the $\rT$-action and preserves $\hat{\omega}$. Suppose further that the moment map $\hat{\mu}:\hat{X}\to \rT\cong (\C^{\times})^n$ intertwines the $\Z^n$-action on $\hat{X}$ and the natural $q^{\Z^n}$-action on $\rT$. Then, $X=\hat{X}/\Z^n$ equipped with the symplectic from $\omega$ induced from $\hat{\omega}$ is an e-Hamiltonian $\rT$-manifold, and $\hat{\mu}$ descends to an e-moment map $\mu:X\to E_{\tau}^n$.
\end{prop}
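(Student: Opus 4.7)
The plan mirrors that of Proposition \ref{prop:X_tilde}, with the multiplicative Maurer--Cartan form $\eta = \sum_{i=1}^n d\log t_i$ on $\rT$ playing the role that $\sum_{i=1}^n \zeta_i dx_i$ on $\C^n$ played there. First I would check that the three pieces of data descend to $X = \hat{X}/\Z^n$: since the $\Z^n$-action is free and proper, $X$ is a complex manifold; since $\Z^n$ preserves $\hat{\omega}$, this form descends to a holomorphic symplectic form $\omega$ on $X$; since the $\Z^n$- and $\rT$-actions commute, the $\rT$-action descends to $X$ preserving $\omega$, and the fundamental vector field $\hat{v}_\zeta$ descends to $v_\zeta$; finally, the intertwining hypothesis on $\hat{\mu}$ is exactly what is needed to produce a holomorphic $\rT$-invariant factorization $\mu: X \to \rT/q^{\Z^n} = E_\tau^n$.

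The remaining step is to verify the e-Hamiltonian identity \eqref{eq:ell_moment_map} for $\mu$. For this I would use that the $\frak{t}$-valued $1$-form $\eta$ on $\rT$ is invariant under $q^{\Z^n}$-translation, since $d\log(q^{k_i} t_i) = d\log t_i$; this is the same descent used in the paragraph preceding Definition \ref{defn:elliptic_T_manifold} to view $\eta$ as a form on $E_\tau^n$. Consequently, if $p:\hat{X}\to X$ is the quotient covering and $\pi:\rT\to E_\tau^n$ the standard one, then $p^*(\mu^*\eta) = \hat{\mu}^*(\pi^*\eta) = \hat{\mu}^*\eta$, and similarly $p^*(\omega(v_\zeta,-)) = \hat{\omega}(\hat{v}_\zeta,-)$. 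Thus the quasi-Hamiltonian identity $\hat{\omega}(\hat{v}_\zeta,-) = \langle \hat{\mu}^*\eta,\zeta\rangle$ on $\hat{X}$, which holds by hypothesis, is precisely the pullback of the desired identity $\omega(v_\zeta,-) = \langle \mu^*\eta,\zeta\rangle$ on $X$. Since $p$ is a local biholomorphism and both sides of the target identity are globally defined on $X$, the identity itself holds on $X$.

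The main subtlety is really just the clean statement of the descent of $\eta$, and the observation that the terms in the quasi-Hamiltonian identity on $\hat{X}$ are each the pullbacks of their counterparts on $X$. No genuine analytic or geometric obstacle arises beyond what is already present in the additive case of Proposition \ref{prop:X_tilde}; the proof is formally parallel, with the multiplicative Maurer--Cartan form replacing the additive one.
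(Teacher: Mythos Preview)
Your proposal is correct and is exactly the argument the paper intends: the paper gives no separate proof of Proposition~\ref{prop:X_hat}, only prefacing it with ``Similarly'' to indicate that the proof of Proposition~\ref{prop:X_tilde} carries over with the multiplicative form $\eta = \sum_i d\log t_i$ in place of $\sum_i \zeta_i\,dx_i$. You have spelled out precisely this parallel argument, including the key observation that $\eta$ is $q^{\Z^n}$-invariant and hence descends to $E_\tau^n$.
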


\subsection{e-Hamiltonian reduction} 
We now introduce the analogue of Hamiltonian reduction for e-Hamiltonian $\rT$-manifolds. This provides a general method for constructing new e-Hamiltonian $\rT$-manifolds from the existing ones.

Let $\rK\cong (\C^{\times})^k$ be a $k$-dimensional subtorus of $\rT$. The inclusion $\phi:\rK\to \rT$ has the form
\begin{equation} \label{eq:K}
\phi(a_1,\ldots,a_k)=\left(\prod_{j=1}^k a_j^{\phi_{1j} },\ldots, \prod_{j=1}^k a_j^{\phi_{nj}}\right),
\end{equation}
where $(\phi_{ij})\in \mathrm{Mat}_{n\times k}(\Z)$. 
We defines a morphism $\phi^{\vee}_{\tau}: E_{\tau}^n \to E_{\tau}^k$ by
\begin{equation} \label{eq:phi_vee}
\phi^{\vee}_{\tau}([x_1,\ldots,x_n])=\left( \left[\sum_{i=1}^n \phi_{i1}x_i,\ldots,\sum_{i=1}^n \phi_{ik}x_i \right] \right).  
\end{equation}

The following lemma shows that the $\rK$-action on an e-Hamiltonian 

\begin{lemma} \label{lemma:K_action}
	Suppose $(X,\omega)$ is an e-Hamiltonian $\rT$-manifold with e-moment map $\mu:X\to E_{\tau}^n$. Then it is also an e-Hamiltonian $\rK$-manifold with e-moment map $\mu_{\rK}=\phi^{\vee}_{\tau}\circ \mu:X\to E_{\tau}^k$ for any subtorus $\rK$ in $\rT$.
\end{lemma}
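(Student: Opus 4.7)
The plan is to verify the e-Hamiltonian defining equation \eqref{eq:ell_moment_map} for the $\rK$-action with the claimed moment map $\mu_{\rK}=\phi^{\vee}_{\tau}\circ\mu$. Since $(X,\omega)$ is already a symplectic manifold and $\rK$ acts preserving $\omega$ (as $\rK\subset \rT$), the only thing to check is the pairing identity.

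First I would observe that for any $\zeta\in \mathfrak{k}$, the vector field on $X$ generated by the $\rK$-action coincides with the vector field generated by $d\phi(\zeta)\in\mathfrak{t}$ under the $\rT$-action; call both $v_{\zeta}$. Applying the e-Hamiltonian hypothesis for $\rT$ then gives
\[
\omega(v_{\zeta},-)=\langle \mu^*\eta_{\mathfrak{t}}, d\phi(\zeta)\rangle,
\]
where $\eta_{\mathfrak{t}}$ denotes the $\mathfrak{t}$-valued one-form on $E_{\tau}^n$ of Definition \ref{defn:elliptic_T_manifold}.

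Next I would carry out the linear-algebraic comparison of one-forms. Writing $\eta_{\mathfrak{t}}=\sum_{i=1}^n dx_i\otimes e_i$ and $\eta_{\mathfrak{k}}=\sum_{j=1}^k dy_j\otimes f_j$ in terms of coordinates on $E_{\tau}^n$ and $E_{\tau}^k$ and the standard bases of $\mathfrak{t}$ and $\mathfrak{k}$, the definition \eqref{eq:phi_vee} gives $(\phi^{\vee}_{\tau})^*dy_j=\sum_i \phi_{ij}\,dx_i$. Pairing with $\zeta=\sum_j \zeta_j f_j$ and comparing with the Lie algebra differential $d\phi(\zeta)=\sum_i \bigl(\sum_j \phi_{ij}\zeta_j\bigr)e_i$ read off from \eqref{eq:K}, one sees directly that
\[
\langle (\phi^{\vee}_{\tau})^*\eta_{\mathfrak{k}},\,\zeta\rangle = \langle \eta_{\mathfrak{t}},\, d\phi(\zeta)\rangle
\]
as one-forms on $E_{\tau}^n$. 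Pulling back by $\mu$ and combining with the previous display yields the required identity $\omega(v_{\zeta},-)=\langle \mu_{\rK}^*\eta_{\mathfrak{k}},\zeta\rangle$.

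I do not expect any real obstacle here; the statement is essentially the functoriality of the moment map under restriction of the group, and the only substantive point is that the morphism $\phi^{\vee}_{\tau}$ defined in \eqref{eq:phi_vee} is precisely the one whose differential is the transpose of $d\phi:\mathfrak{k}\to \mathfrak{t}$, which is what makes the two pairings agree. The bookkeeping amounts to checking that the integer matrix $(\phi_{ij})$ appears in the same way on both sides.
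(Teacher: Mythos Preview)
Your proposal is correct and follows essentially the same approach as the paper: both reduce to verifying the moment map identity and establish it via the key equality $\langle(\phi^{\vee}_{\tau})^*\eta_{\mathfrak{k}},\zeta\rangle=\langle\eta_{\mathfrak{t}},d\phi(\zeta)\rangle$, after which the $\rT$ e-Hamiltonian hypothesis finishes the argument. The only difference is cosmetic: the paper packages the transpose as a map $\iota^{\vee}$ on Lie-algebra-valued forms and writes the chain of equalities in one line, whereas you spell out the matrix entries $(\phi_{ij})$ explicitly.
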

\begin{proof}
Since $(X,\omega)$ is a symplectic K-manifold, it suffices to check that $\mu_{\rK}$ is an e-moment map, i.e, it satisfies \eqref{eq:ell_moment_map}. Let $\eta_T$ and (resp. $\eta_K$) be denote the $\frak{t}$- (resp. $\frak{k}$-) valued $1$-forms on $E_{\tau}^n$ (resp. $E_{\tau}^k$) as in Definition \ref{defn:elliptic_T_manifold}. Let $\iota: \frak{k}\to \frak{t}$ and $\iota^{\vee}: \frak{t}\to \frak{k}$ denote both the maps on Lie algebras and the maps on the the Lie algebra-valued differential forms $\iota: \Omega(-,\frak{k})\to  \Omega(-,\frak{t})$ and $\iota^{\vee}: \Omega(-,\frak{t})\to \Omega(-,\frak{k})$ induced by $\phi$. Then, we have
\[
\langle \mu_{\rK}^*\eta_{\rK}, \zeta\rangle= \langle \mu^*\circ (\phi^{\vee}_{\tau})^*\eta_{\rK},\zeta \rangle = \langle  \iota^{\vee}(\mu^*\eta_{\rT}),\zeta)\rangle=\langle \mu^*\eta_{\rT}, \iota(\zeta) \rangle=\omega(v_{\zeta},-),
\]
for all $\zeta\in \frak{k}$.

\end{proof}

We now define the analogue of Hamiltonian reduction for e-Hamiltonian $\rT$-manifolds. 

\begin{defn} 
	\label{def:e_reduction}
	Suppose $(X,\omega)$ is an e-Hamiltonian $\rK$-manifold, with e-moment map $\mu_{\rK}:X\to E_{\tau}^k$. Let $(\cL,\beta)$ be a pair such that $\beta \in E_{\tau}^k$ and $\cL$ is a $\rK$-linearized ample line bundle over $X$. We define an e-Hamiltonian reduction $M(\cL,\beta)$ of $X$ by $\rK$ to be the GIT quotient 
	\[
	M(\cL,\beta)=\mu_{\rK}^{-1}(\beta)\twobar_{\cL} K.
	\] 
\end{defn}


Let's assume $M(\cL,\beta)$ is a complex manifold and denote by $\bar{\omega}$ the induced symplectic form. The restriction of $\mu$ to $\mu_{\rK}^{-1}(\beta)$ is $\rK$-invariant and descends to a map $\bar{\mu}: M(\cL,\beta)\to E_{\tau}^n$. 
It is easy to see that the residual $\rT$-action on $(M(\cL,\beta),\bar{\omega})$ is e-Hamiltonian and $\bar{\mu}$ is an e-moment map. On the other hand, the subtorus $\rK$ acts trivially on $M(\cL,\beta)$, so we have an action of $\rA=\rT/\rK$ on $M(\cL,\beta)$. The following lemma shows that the $\rA$-action on $M(\cL,\beta)$ is also e-Hamiltonian.

\begin{lemma} \label{lemma:residual_action}
Let $(X,\omega)$ be an e-Hamiltonian $\rT$-manifold with an e-moment map $\mu:X\to E_{\tau}^n$ and let $\rK\subset \rT$ be a subtorus which acts trivially on $X$. Then the action of $\rA=\rT/\rK$ on $X$ is also e-Hamiltonian. 
\end{lemma}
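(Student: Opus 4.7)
The plan is to take $\mu_\rA := \pi_\tau \circ \mu : X \to E_\tau^m$ as the candidate e-moment map for the residual $\rA$-action, where $\pi_\tau : E_\tau^n \to E_\tau^m$ is the morphism induced on the elliptic quotients by the torus quotient $\pi : \rT \to \rA \cong (\C^\times)^m$. Writing $\pi$ via an integer matrix $(\pi_{ji}) \in \mathrm{Mat}_{m \times n}(\Z)$, the descent $\pi_\tau$ is built exactly as in \eqref{eq:phi_vee}; well-definedness on $E_\tau^n = \C^n/\Gamma^n$ is automatic since integer matrices preserve the period lattice. The $\rA$-invariance of $\mu_\rA$ is immediate from the $\rT$-invariance of $\mu$, so the only content is to verify \eqref{eq:ell_moment_map}.

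For this verification I would use the invariant inner product on $\mathfrak{t}$ to split $\mathfrak{t} = \mathfrak{k} \oplus \mathfrak{k}^\perp$ and identify $\mathfrak{a} \cong \mathfrak{k}^\perp$ with the restricted inner product, so that $\pi_* : \mathfrak{t} \to \mathfrak{a}$ becomes the orthogonal projection $P_\perp$. Because the Maurer--Cartan form transforms functorially as $\pi^* \eta_\rA = \pi_* \circ \eta_\rT$, for any $\zeta \in \mathfrak{a}$, viewed via this identification as an element of $\mathfrak{k}^\perp \subset \mathfrak{t}$, the self-adjointness of $P_\perp$ together with $P_\perp \zeta = \zeta$ yields
\[
\langle \pi_\tau^* \eta_\rA, \zeta \rangle_{\mathfrak{a}} = \langle P_\perp(\eta_\rT), \zeta \rangle_{\mathfrak{k}^\perp} = \langle \eta_\rT, \zeta \rangle_{\mathfrak{t}}.
\]

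The remaining ingredient is the triviality of the $\rK$-action: since the $\rT$-action factors through $\rA$, the residual fundamental vector field $v_\zeta$ coincides with the $\rT$-fundamental vector field generated by $\zeta$ viewed as an element of $\mathfrak{k}^\perp \subset \mathfrak{t}$; any other lift would produce the same vector field, as the difference lies in $\mathfrak{k}$ and hence acts trivially. Combining this with the $\rT$ e-Hamiltonian condition and the display above gives
\[
\omega(v_\zeta, -) = \langle \mu^* \eta_\rT, \zeta \rangle = \langle \mu^* \pi_\tau^* \eta_\rA, \zeta \rangle = \langle \mu_\rA^* \eta_\rA, \zeta \rangle,
\]
which is exactly \eqref{eq:ell_moment_map} for the $\rA$-action.

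No single step is hard; the main subtlety is fixing the inner product on $\mathfrak{a}$ coherently with that on $\mathfrak{t}$. The orthogonal-complement identification is the right choice because it makes the natural inclusion $\mathfrak{a} \hookrightarrow \mathfrak{t}$, used as the canonical lift for the vector-field computation, simultaneously the adjoint of $\pi_*$, which is exactly what forces the pairings under $\eta_\rT$ and $\pi_\tau^* \eta_\rA$ to agree.
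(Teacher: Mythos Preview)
Your argument is correct, but the construction of the e-moment map differs from the paper's. You take $\mu_\rA = \pi_\tau \circ \mu$, where $\pi_\tau : E_\tau^n \to E_\tau^m$ is the descent of the quotient $\pi : \rT \to \rA$ itself; this map is always defined, and you only invoke triviality of the $\rK$-action at the very end, to identify the $\rA$- and $\rT$-fundamental vector fields. The paper instead uses $\rK$-triviality at the outset: since $\mu_\rK = \phi^\vee_\tau \circ \mu$ is forced to be constant, the image of $\mu$ sits in a single coset of $\ker\phi^\vee_\tau = \mathrm{Im}\,\psi^\vee_\tau$, and one sets $\mu_\rA = (\psi^\vee_\tau)^{-1}\circ t_{a_0^{-1}}\circ \mu$ for a chosen $a_0\in\mathrm{Im}\,\mu$. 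The verification in both cases comes down to the adjointness of $\pi_*$ and $\pi^\vee$; you phrase it via the orthogonal splitting $\mathfrak{t}=\mathfrak{k}\oplus\mathfrak{k}^\perp$, the paper via the dual map. Your route is slightly more direct and avoids the auxiliary translation, while the paper's construction has the virtue of landing in $\ker\phi^\vee_\tau\subset E_\tau^n$, which is exactly the copy of $E_\tau^d$ used later in the explicit formulas for $\mu_{\C,\rA}$ on elliptic hypertoric varieties. Note that the two e-moment maps genuinely differ: they are related by the isogeny $\pi_\tau\circ\psi^\vee_\tau$ (with matrix $MM^T$ if $M$ is the matrix of $\pi_*$), and correspond to different choices of inner product on $\mathfrak{a}$, as you correctly flag in your final paragraph.
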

\begin{proof}
Since $(X,\omega)$ is a symplectic $\rA$-manifold, it suffices to show the existence of an e-moment map. Let $\pi: \frak{t}\to \frak{a}$ and $\pi^{\vee}: \frak{a}\to \frak{t}$ denote both the maps on Lie algebras and the maps on the the Lie algebra-valued differential forms $\pi: \Omega(-,\frak{t})\to  \Omega(-,\frak{a})$ and $\pi^{\vee}: \Omega(-,\frak{a})\to \Omega(-,\frak{t})$ induced by the quotient map $\psi:\rT\to \rA$. We have an short exact sequence
\begin{equation} \label{eq:ell_ses}
1\longrightarrow E_{\tau}^d\overset{\psi^{\vee}_{\tau}}{\longrightarrow} E_{\tau}^n \overset{\phi^{\vee}_{\tau}}{\longrightarrow} E_{\tau}^k \longrightarrow 1,
\end{equation}
where $d=n-k$. For $a\in E_{\tau}^n$ we denote by $t_{a}:E_{\tau}^n\to E_{\tau}^n$ the translation by $a$. Since the $\rK$-action on $X$ is trivial, the moment map $\mu_{\rK}=\phi^{\vee}_{\tau}\circ \mu$ is necessarily constant. 
We fix $a_0\in \Image{\mu}$ and define a map $\mu_{\rA}:X\to E_{\tau}^d$ by
\begin{equation} \label{eq:D_moment_map}
\mu_{\rA}=(\psi^{\vee}_{\tau})^{-1}\circ t_{a_0^{-1}}\circ \mu. 
\end{equation} 
Note that $(\psi^{\vee}_{\tau})^{-1}$ is an isomorphism $\ker \phi^{\vee}_{\tau}\isoto E_{\tau}^d$. Then, the map $\mu_{\rA}$ satisfies
\[
\langle \mu_{\rA}^*\eta_{\rA}, \pi(\zeta)\rangle=\langle \mu_{\rA}^* (\pi^{\vee} (\eta_{\rA})),\zeta\rangle=\langle \mu_{\rA}^* \circ (\psi^{\vee}_{\tau})^*\eta_{\rT},\zeta\rangle =\langle \mu^*\eta_{\rT},\zeta\rangle=\omega(v_{\zeta},-)=\omega(v_{\pi(\zeta)},-), 
\]
for all $\zeta\in \frak{t}$, where $\eta_{\rA}$ denotes the $\frak{a}$-valued one form on $E_{\tau}^d$ defined as in Definition \ref{defn:elliptic_T_manifold}. 
Since the map $\pi:\frak{t}\to \frak{a}$ is surjective, it follows that $\mu_{\rA}$ is an e-moment map.
\end{proof}

\section{Elliptic hypertoric varieties} \label{sec:ell}

\subsection{The building block} \label{sec:X_theta}
Additive and multiplicative hypertoric varieties are given by Hamiltonian and quasi-Hamiltonian reduction of $(T^*\C)^n$ and $(T^*\C\setminus \{zw\!=\!1\})^n$ by a subtorus $\rK$ of $\rT\!=\!(\C^{\times})^n$, respectively (see appendix \ref{sec:appendix}). Analogously, we define elliptic hypertoric varieties to be e-Hamiltonian reduction of $X_{\vartheta}^n$ by $\rK$. Here the building block $X_{\vartheta}$ belongs to the class of two dimensional e-Hamiltonian $\C^{\times}$-manifold introduced in Section \ref{sec:examples}. A detailed description of $X_{\vartheta}$ is as follows.

Let $\tau\in \bm{H}$ and $\Gamma=\langle 1,\tau\rangle$. We set 
\[
E_{\tau}=\C/\Gamma=\C^{\times}/q^{\Z}, \quad  q=\exp(2\pi i \tau). 
\]
Let $\widetilde{X}_{\vartheta}$ be a hypersurface in $\C^3$ defined by
\[
\widetilde{X}_{\vartheta} =\{(z,w,x)\in \C^3|zw=\vartheta(x)\}. 
\] 
Here $\vartheta:\C\to \C$ is the Jacobi theta function 
\begin{equation*}
\theta(x)=(t^{1/2}-t^{-1/2}) \prod_{m>0}(1-q^mt)(1-q^mt^{-1}), \quad  t=\exp(2\pi i x),
\end{equation*}
which satisfies
\begin{equation*}
\vartheta(x+\gamma)=(-1)^b q^{-b^2/2}t^{-b} \vartheta(x), \quad \gamma=a+b\tau\in \Gamma.
\end{equation*}
The theta function $\vartheta$ defines a section of a degree $1$ line bundle over $E_{\tau}$ with a unique zero at the identity element $e$. We denote this line bundle by $\cO(e)$ and note that it has the automorphy factor 
\begin{equation*}
\alpha:\Gamma\times \C\to \C^{\times}, \qquad \alpha(\gamma,x)= (-1)^b q^{-b^2/2}t^{-b}. 
\end{equation*}
Let $\Gamma$ act on $\C^3$ by
\[
\gamma\cdot (z,w,x)=(z,\alpha(\gamma,x)w,x+\gamma). 
\]
The quotient $\C^3/\Gamma$ is the total space $\mathrm{Tot}(\cO\oplus\cO(e))$ of the rank $2$ bundle $\cO\oplus\cO(e)$ over $E_{\tau}$. 

We define $X_{\vartheta}$  to be the hypersurface 
\[
X_{\vartheta}=\widetilde{X}_{\vartheta}/\Gamma 
\]
in $\mathrm{Tot}(\cO\oplus\cO(e))$. 
We equip $\widetilde{X}$ with the holomorphic sympletic form  $\widetilde{\omega}_{\vartheta}$  \[
\widetilde{\omega}_{\vartheta}=\Res \cfrac{dz\wedge dw\wedge dx}{zw-\vartheta(x)}.
\] 
This in turn equips $X_{\vartheta}$ with a holomorphic symplectic form $\omega_{\vartheta}$.
The $\C^{\times}$-action defined by \eqref{eq:action} on $(X_{\vartheta},\omega_{\vartheta})$ is e-Hamiltonian with the e-moment map given by the projection to $E_{\tau}$. The e-Hamiltonian $\C^{\times}$-manifold $(X_{\vartheta},\omega_{\vartheta})$ will be our building block for elliptic hypertoric varieties.

\subsection{Hypertoric varieties via e-Hamiltonian reduction}  \label{sec:ell_hypertoric}
Let $\mathfrak{t}$ and $\mathfrak{a}$ be complex vector spaces of dimensions $n$ and $d$ respectively. Let $\{e_1,\ldots,e_n\}\subset \mathfrak{t}_{\Z}$ be an integer basis and $\{e_1^{\vee},\ldots,e_n^{\vee}\}$ the dual basis. The combinatorial input for a hypertoric variety is a collection $u=\{u_1,\ldots,u_n\}$ of primitive vectors in $\mathfrak{a}_{\Z}$ which span $\mathfrak{a}_{\Z}$ over the integers. From which we obtain short exact sequences
\begin{equation} \label{eq:ses1}
0\longrightarrow \mathfrak{k}\overset{\iota}{\longrightarrow}\mathfrak{t}\overset{\pi}{\longrightarrow}\mathfrak{a}\longrightarrow 0,
\end{equation}

\begin{equation}  \label{eq:ses2}
0\longleftarrow \mathfrak{k}^{\vee}\overset{\iota^{\vee}}{\longleftarrow}\mathfrak{t}^{\vee}\overset{\pi^{\vee}}{\longleftarrow}\mathfrak{a}^{\vee}\longleftarrow 0.
\end{equation} 
by setting $\pi(e_i)=u_i$. Here $\mathfrak{k}=\ker \pi$, and \eqref{eq:ses2} is the dual sequence of \eqref{eq:ses1}. Exponentiating \eqref{eq:ses1}  gives us a short exact sequence of tori
\begin{equation} \label{eq:ses3}
1\longrightarrow \rK \overset{\phi}{\longrightarrow} \rT \overset{\psi}{\longrightarrow} \rA \longrightarrow 1,
\end{equation}
where $\rT=\mathfrak{t}/\mathfrak{t}_{\Z}$, $\rA=\mathfrak{a}/\mathfrak{a}_{\Z}$, and $\rK=\ker (\rT\to \rA)$. 

Let's consider $X_{\vartheta}^n$ equipped with the holomorphic sympletic form  $\omega_{\C}$ induced from the $\Gamma^n$-invariant symplectic form 
\begin{equation*} \label{eq:omega_c}
\widetilde{\omega}_{\C}=\sum_{i=1}^n \Res \cfrac{dz_i\wedge dw_i\wedge dx_i}{z_iw_i-\vartheta(x_i,\tau)} 
\end{equation*}
on $\widetilde{X}_{\vartheta}^n$. The $\rT$-action on $\widetilde{X}_{\vartheta}^n$ defined by
\[
\vec{t}\cdot(\vec{z},\vec{w},\vec{x})=(t_1z_1,t_1^{-1}w_1,x_1,\ldots,t_nz_n,t_n^{-1}w_n,x_n)
\]
is Hamiltonian with respect to $\widetilde{\omega}_{\C}$ with the moment map $\widetilde{\mu}:\widetilde{X}_{\vartheta}^n\to \C^n$ given by 
\[
\widetilde{\mu}(\vec{z},\vec{w},\vec{x})=\vec{x}.
\]
This in turn defines an e-Hamiltonian $\rT$-action on $(X_{\vartheta}^n,\omega_{\C})$ with the e-moment map $\mu_{\C}: X_{\vartheta}^n\to E_{\tau}^n$ given by the projection to $E_{\tau}^n$. The $\rK$-action on $(X_{\vartheta}^n,\omega_{\C})$ is also e-Hamiltonian by Lemma \ref{lemma:K_action} with the moment map given by $\mu_{\C,\rK}=\phi^{\vee}_{\tau}\circ \mu_{\C}$, where $\phi^{\vee}_{\tau}: E_{\tau}^n\to E_{\tau}^k$ is the ``transpose'' of $\phi$ defined in \eqref{eq:phi_vee}.

We choose an ample line bundle $\cL=\otimes_{i=1}^n \cL_i$ over $X_{\vartheta}^n$, where $\cL_i$ is the pullback of the line bundle $\cO(e)$ over $E_{\tau}$ via the composition
\[
X_{\vartheta}^n\overset{\mu_{\C}}{\longrightarrow}E_{\tau}^n\overset{\mathrm{pr}_i}{\longrightarrow} E_{\tau}.
\]
Here $\mathrm{pr}_i$ denotes the projection to the $i$-th component. More explicitly, the total space of $\cL$ is the quotient of $\C\times \widetilde{X}_{\vartheta}$ by the $\Gamma^n$-action defined as
\[
\vec{\gamma}\cdot(v, (\vec{z},\vec{w},\vec{x}))=\left(\left(\prod_{i=1}^n \alpha (\gamma_i,x_i)\right)v ,\vec{\gamma}\cdot(\vec{z},\vec{w},\vec{x})\right).  
\] 
We will also equip $X_{\vartheta}^n$ with a Kähler form $\omega_{\R}$ induced by the $\Gamma^n$-invariant Kähler form 
\begin{equation*} \label{eq:omega_r}
\widetilde{\omega}_{\R}= \cfrac{\sqrt{-1}}{2}\partial\bar{\partial} \left( \sum_{i=1}^n  |z_i|^2+ \exp\left (-2\pi  \frac{(\Im x_i)^2}{\Im \tau} \right)|w_i|^2+\frac{|x_i|^2}{\Im \tau} \right),
\end{equation*}
on $\widetilde{X}_{\vartheta}^n$. Note that $[\omega_{\R}]=c_1(\cL)$. 

The restriction of the $\rT$-action on $\widetilde{X}_{\vartheta}^n$ to the compact real torus $\rK_{\R}\cong U(1)^k$ in $\rK$ is Hamiltonian with respect to $\widetilde{\omega}_{\R}$. The moment map $\widetilde{\mu}_{\R,\rK}:\widetilde{X}^n_{\vartheta}\to \mathfrak{k}_{R}^{\vee}$ given by 
\[
\widetilde{\mu}_{\R,\rK}(\vec{z},\vec{w},\vec{x})=\frac{1}{2} \sum_{i=1}^n 	\left( |z_i|^2 - \exp\left (-2\pi \frac{(\Im x_i)^2}{\Im \tau} \right) |w_i|^2 \right) \iota^{\vee} e_i^{\vee}
\]
is $\Gamma^k$-invariant and descends to the moment map $\mu_{\R,\rK}:X_{\vartheta}^n\to \mathfrak{k}_{R}^{\vee}$ for the Hamiltonian $\rK_{\R}$-action on $(X^n_{\vartheta},\omega_{\R})$.

We are now ready to define elliptic hypertoric varieties.

\begin{defn}[Elliptic hypertoric varieties] 
	\label{defn:ell_hypertoric}
	Given a collection $u=\{u_1,\ldots,u_n\}$ of primitive vectors in $\mathfrak{a}_{\Z}$, $\tau\in\bm{H}$, and 
	a choice of parameters 
	$(\alpha,\beta)\in \mathfrak{k}_{\R}^{\vee}\times E_{\tau}^k$, we define the associated (elliptic) hypertoric variety $\mathfrak{M}^{\tau}_u(\alpha,\beta)$ to be e-Hamiltonian reduction (Definition \ref{def:e_reduction}) of $X_{\vartheta}^n$ by $\rK$
	\begin{equation} \label{eq:ell_hypertoric}
	\mathfrak{M}^{\tau}_u(\alpha,\beta)=(\mu_{\C,\rK})^{-1}(\beta)\twobar_{\alpha} \rK.  
	\end{equation}
	Here $\alpha$ determines a lift of the $\rK$-action to the total space of $\cL$.  Equivalently, we can define $\mathfrak{M}^{\tau}_u(\alpha,\beta)$ to be the symplectic reduction of $(\mu_{\C,\rK})^{-1}(\beta)$ by $\rK_{\R}$
	\[
	 \mathfrak{M}^{\tau}_u(\alpha,\beta)=(\mu_{\R,\rK},\mu_{\C,\rK})^{-1}(\alpha,\beta)/\rK_{\R}. 
	\]
\end{defn}
The GIT quotient \eqref{eq:ell_hypertoric} is a priori only defined for characters $\alpha\in\mathfrak{k}_{\Z}=\Hom(\rK,\C^{\times})$. However, as GIT stability condition remains unchanged if we replace $\alpha$ by multiple of itself, we can formally define \eqref{eq:ell_hypertoric}  for $\alpha\in\mathfrak{k}_{\Z}\otimes_{\Z}\Q$. Furthermore, since GIT stability depends locally constantly on $\alpha$, we can then extend \eqref{eq:ell_hypertoric} to $\mathfrak{k}_{\R}$. It is also easy to see that $\mathfrak{M}^{\tau}_u(\alpha,\beta)$ does not depend on the signs of the vectors $u_i$ in the input datum.

Similar to the case of additive hypertoric varieties, $\mathfrak{M}^u_{\tau}(\alpha,\beta)$ is the most singular when $(\alpha,\beta)=(0,0)$, and variations of the parameters $\alpha$ and $\beta$ correspond to partial resolutions and complex deformations of $\mathfrak{M}^u_{\tau}(0,0)$, respectively.

When $\mathfrak{M}^{\tau}_u (\alpha,\beta)$ is smooth (see Theorem \ref{thm:smoothness_ell} for smoothness criteria), it is equipped with a  holomorphic symplectic form $\bar{\omega}_{\C}$ and a Kähler form $\bar{\omega}_{\R}$ induced by $\omega_{\C}$ and $\omega_{\R}$, respectively. 
The residue action of the torus $\rA$ on $\mathfrak{M}^{\tau}_u(\alpha,\beta)$ is e-Hamiltonian with respect to $\bar{\omega}_{\C}$ (by Lemma \ref{lemma:residual_action}) and Hamiltonian with respect to $\bar{\omega}_{\R}$. The corresponding moment maps $\mu_{\R,\rA}: \mathfrak{M}_u^{\tau}(\alpha,\beta) \to \mathfrak{a}_{\R}^{\vee}$ and $\mu_{\C,\rA}: \mathfrak{M}_u^{\tau}(\alpha,\beta) \to E_{\tau}^d$ are given by

\[
\mu_{\R,\rA}([\vec{z},\vec{w},\vec{x}])=\frac{1}{2}\sum_{i=1}^n \left( |z_i|^2 - \exp\left (-2\pi \frac{(\Im x_i)^2}{\Im \tau} \right)|w_i|^2 -\alpha_i \right)  e_i^{\vee} \in \ker (\mathfrak{t}_{\R}^{\vee}\to \mathfrak{k}_{\R}^{\vee})=\frak{a}^{\vee}_{\R},
\]
and
\[
\mu_{\C,\rA}([\vec{z},\vec{w},\vec{x}])=\sum_{i=1}^n \left[x_i-\beta_i \right] \in \ker(E_{\tau}^n \overset{\phi^{\vee}_{\tau}}{\longrightarrow} E_{\tau}^k)\cong E_{\tau}^d, 
\]
where $(\alpha_1,\ldots,\alpha_n)\in \mathfrak{t}_{\R}^{\vee}$ and $(\beta_1,\ldots,\beta_n)\in E_{\tau}^n$ are lifts of $(\alpha,\beta)$. We note that both $\mu_{\R,\rA}$ and $\mu_{\C,\rA}$ are surjective. 

The following are elliptic hypertoric analogues of some familiar spaces:

\begin{example} \label{ex:TPd}
	Let $\{u_1,\ldots,u_{n}\}\subset \mathfrak{a}_{\Z}\cong \Z^{n-1}$ be a collection such that $u_1,\ldots,u_{n-1}$ is an integer basis and $u_{n}=-\sum_{i=1}^{n-1} u_i$. Then $\rK$ is the diagonal subtorus. For $\alpha$ a regular value and $0\in E_{\tau}$ the identity element, $\mathfrak{M}^{\tau}_u(\alpha,0)$ is the analogue of the additive hypertoric variety $\mathfrak{M}_u^+(\alpha,0)=T^*\bP^{n-1}$.
\end{example}

\begin{example} \label{ex:A_n}
	Let $u_1\in\mathfrak{a}\cong \Z$ be a primitive integer vector and define the map $\pi:\mathfrak{t} \to\mathfrak{a}$ by $\pi(e_i)=u_1$  for $i=1,\ldots,n$. Then $\rK$ is the subtorus
	\begin{equation} \label{eq:A_n}
	K=\{(t_1,\ldots,t_{n})\in T| \prod_{i=1}^{n} t_i=1\}.
	\end{equation}
	For $\alpha$ a regular value and $0\in E_{\tau}^{n-1}$ the identity element, $\mathfrak{M}^{\tau}_u(\alpha,0)$ is the analogue of the additive hypertoric variety  $\mathfrak{M}_u^+(\alpha,0)= \widetilde{\C^2/\Z_{n}}$, and the latter is the minimal resolution of the type-$A_{n-1}$ Kleinian singularity.
\end{example}


\subsection{Topology and geometry of elliptic hypertoric varieties}
In \cite{DS17}, Dancer and Swann introduced 
additive hypertoric varieties of infinite topological type
as hyperkähler quotients of Hilbert manifolds, generalizing the previous constructions of additive hypertoric varieties of finite topological type \cite{BD00}, and of complex $2$-dimensional hyperkähler manifolds of $A_{\infty}$-type \cite{AKL89,Got94}. In order for the hyperkähler quotient construction to work, the hyperplane arrangement associated to a resulting hypertoric variety cannot be periodic. On the other hand, the universal cover of an elliptic hypertoric variety can be viewed as a version of additive hypertoric variety of infinite topological type whose associated hyperplane arrangement is periodic. 
In this subsection, we will study the topology and geometry of elliptic hypertoric varieties through their universal covers.

Let $\widetilde{\mu}_{\R,\rK}:\widetilde{X}^n_{\vartheta}\to \frak{k}_{\R}^{\vee}$ and $\widetilde{\mu}_{\C,\rK}:\widetilde{X}^n_{\vartheta}\to  \frak{k}^{\vee}$ be the moment maps for the actions of $\rK$ and $\rK_{\R}$ on $\widetilde{X}^n_{\vartheta}$ with respect to $\widetilde{\omega}_{\C}$ and $\widetilde{\omega}_{\R}$, respectively. Let $(\alpha,\tilde{\beta})\in \frak{k}_{\R}^{\vee}\times \mathfrak{k}$ 
We define $\widetilde{\mathfrak{M}}_{\tau}^u(\alpha,\tilde{\beta})$ to be the Hamiltonian reduction
\[
\widetilde{\mathfrak{M}}_{\tau}^u(\alpha,\tilde{\beta}):=(\widetilde{\mu}_{\R,\rK},\widetilde{\mu}_{\C,\rK})^{-1} (\alpha,\tilde{\beta})/\rK_{\R}= (\widetilde{\mu}_{\C,\rK})^{-1}(\tilde{\beta})\twobar_{\alpha} \rK.
\]
The moment maps for the residual $\rA$-action on $\widetilde{\mathfrak{M}}^u_{\tau}(\alpha,\tilde{\beta})$ are given by
\[
\widetilde{\mu}_{\R,\rA}(\vec{z},\vec{w},\vec{x})=\frac{1}{2}\sum_{i=1}^n \left( |z_i|^2 - \exp\left (-2\pi \frac{(\Im x_i)^2}{\Im \tau} \right)|w_i|^2 -\alpha_i \right)  e_i^{\vee} \in \ker (\mathfrak{t}_{\R}^{\vee}\to \mathfrak{k}_{\R}^{\vee})=\frak{a}^{\vee}_{\R},
\]
and
\[
\widetilde{\mu}_{\C,\rA}(\vec{z},\vec{w},\vec{x})=\sum_{i=1}^n  (x_i-\widetilde{\beta_i}) e_i^{\vee}  \in \ker(\frak{t}^{\vee} \to \frak{k}^{\vee})\cong \frak{a}^{\vee},
\]
where $(\tilde{\beta}_1,\ldots,\tilde{\beta}_n)\in \frak{t}^{\vee}$ is a lift of $\tilde{\beta}$. 

The sequence \eqref{eq:ses2} induces a short exact sequence of lattices
\begin{equation}
0\to \Gamma^d\to \Gamma^n\to \Gamma^k\to 0. 
\end{equation}
The action of $\Gamma^d$ on $\widetilde{X}^n_{\vartheta}$ commutes with the $\rK$-action and preserves the semi-stable locus of the level set  $(\widetilde{\mu}_{\C,\rK})^{-1}(\tilde{\beta})$ and thus descends to an action on $\widetilde{\mathfrak{M}}^u_{\tau}(\alpha,\tilde{\beta})$. If $\tilde{\beta}$ is a lift of $\beta\in E_{\tau}^k$, then the quotient of $\widetilde{\mathfrak{M}}^u_{\tau}(\alpha,\tilde{\beta})$ by $\Gamma^d$ is the elliptic hypertoric variety $\mathfrak{M}^u_{\tau}(\alpha,\beta)$, and $\widetilde{\mu}_{\C,\rA}$ descends to the e-moment map $\mu_{\C,\rA}:\mathfrak{M}^u_{\tau}(\alpha,\beta)\to E_{\tau}^d$. 


We now introduce the hyperplane arrangements associated to $\mathfrak{M}^u_{\tau}(\alpha,\beta)$ and $\widetilde{\mathfrak{M}}^u_{\tau}(\alpha,\tilde{\beta})$, which are combinatorial devices that encode the data used to construct hypertoric varieties.

The real and elliptic hyperplane arrangements associated to $\mathfrak{M}^u_{\tau}(\alpha,\beta)$ are collections of hyperplanes $\mathcal{H}_{\R}=\{H_{\R,i}\}_{i=1}^n$ and $\mathcal{H}_{\tau}=\{H_{\tau,i}\}_{i=1}^n$ in $\frak{a}^{\vee}_{\R}$ and $E_{\tau}^d$, respectively, 
\[
H_{\R,i}=\{a\in \frak{a}_{\R}^{\vee}|\langle a, u_i \rangle-\alpha_i =0\},
\]
where $(\alpha_1,\ldots,\alpha_n)\in \frak{t}_{\R}^{\vee}$ is a lift of $\alpha$, and 
\begin{equation} \label{eq:ell_hyperplane}
H_{\tau,i}=\{b=(b_1,\ldots,b_n)\in E_{\tau}^d= \ker(E_{\tau}^n \overset{\phi^{\vee}_{\tau}}{\longrightarrow} E_{\tau}^k) |b_i-\beta_i=0\}. 
\end{equation}
We denote by
\[
\mathcal{H}_{\C}=\{H_{\C,i,\gamma}|i=1,\ldots,n, \gamma \in\Gamma\}
\] 
the periodic complex hyperplane arrangement in $\frak{a}^{\vee}$, 
\[
H_{\C,i,\gamma}=\{\tilde{b}\in \frak{a}^{\vee}= \ker(\frak{t}^{\vee} \to \frak{k}^{\vee})| \langle \tilde{b}, u_i\rangle - \tilde{\beta}_i-\gamma=0\}.
\] 
$\mathcal{H}_{\R}$ and $\mathcal{H}_{\C}$ are the real and complex hyperplane arrangements associated to $\widetilde{\mathfrak{M}}^u_{\tau}(\alpha,\tilde{\beta})$. We will also denote by $\mathcal{A}=\{\mathcal{A}_i\}$ and $\widetilde{\mathcal{A}}=\{\widetilde{\mathcal{A}}_{i,\gamma}\}$ the arrangements of real codimension $3$ subspaces in $\frak{a}^{\vee}_{\R}\times E_{\tau}^d$ and $\frak{a}^{\vee}_{\R}\oplus\frak{a}^{\vee}$, respectively, 
\[
\mathcal{A}_i=H_{\R,i}\times H_{\tau,i},
\]
\[
\widetilde{\mathcal{A}}_{i,\gamma}=H_{\R,i}\times H_{\C,i,\gamma}.
\] 


The following properties of $\widetilde{\mathfrak{M}}^u_{\tau}(\alpha,\tilde{\beta})$ can be characterized in term of the combinatorics of its hyperplane arrangements.

\begin{theorem} {\cite[Theorem ~3.1]{BD00}} \label{thm:homeo_stabilizer}
	Let $\{u_1,\ldots,u_n\} \subset \frak{a}_{\Z}$ be as in Definition \ref{defn:ell_hypertoric} and set $\widetilde{\mu}_{\rA}= (\widetilde{\mu}_{\R,\rA},\widetilde{\mu}_{\C,\rA})$. Then
	\begin{itemize}
		\item The map $\widetilde{\mu}_{\rA}: \widetilde{\mathfrak{M}}^u_{\tau}(\alpha,\tilde{\beta}) \to \frak{a}^{\vee}_{\R}\oplus \frak{a}^{\vee}= \R^{3d}$ descends to a homeomorphism $\widetilde{\mathfrak{M}}^u_{\tau}(\alpha,\tilde{\beta})/\rA_{\R}\to \R^{3d}$. 
		\item For $p\in \frak{a}^{\vee}_{\R}\oplus \frak{a}^{\vee}$, the stabilizer of a point in $\widetilde{\mu}_{\rA}^{-1}(p)$ is the subtorus in $\rA$ whose Lie algebra is spanned by the vectors $u_i$ for which $p\in \widetilde{\mathcal{A}}_{i,\gamma}$.
	\end{itemize}
\end{theorem}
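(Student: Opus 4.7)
The plan is to adapt the classical argument for additive hypertoric varieties (cf. [BD00, Theorem 3.1]) to the universal cover $\widetilde{\mathfrak{M}}^u_\tau(\alpha,\tilde{\beta})$, which by construction is an ordinary Kähler/holomorphic-symplectic reduction of $\widetilde{X}^n_\vartheta$ by $\rK_{\R}$, with residual Hamiltonian $\rA_{\R}$-action. Fix lifts $(\alpha_i)\in \frak{t}^{\vee}_{\R}$ of $\alpha$ and $(\tilde{\beta}_i)\in \frak{t}^{\vee}$ of $\tilde{\beta}$. For $p=(a,\tilde{b})\in \frak{a}^{\vee}_{\R}\oplus \frak{a}^{\vee}$ with lifts $(a_i),(\tilde{b}_i)$, a representative $(\vec{z},\vec{w},\vec{x})\in \widetilde{X}^n_\vartheta$ of a point in $\widetilde{\mu}_{\rA}^{-1}(p)$ satisfies, coordinate-wise,
\begin{equation*}
z_iw_i = \vartheta(x_i), \qquad x_i = \tilde{\beta}_i + \tilde{b}_i, \qquad |z_i|^2 - c_i|w_i|^2 = \alpha_i + 2a_i =: s_i,
\end{equation*}
where $c_i = \exp(-2\pi(\Im x_i)^2/\Im\tau) > 0$. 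Both bullets reduce to showing that this system cuts out a unique $\rT_{\R}$-orbit in $\widetilde{X}_\vartheta^n$ and to identifying the corresponding stabilizer.

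The bijectivity I would establish by a case analysis in each factor. If $\vartheta(x_i)\neq 0$, one uses the $i$-th circle in $\rT_{\R}$ to normalize $z_i \in \R_{>0}$; then $w_i = \vartheta(x_i)/z_i$ is determined, and $|z_i|^2$ is the unique positive root of the quadratic $r^2 - s_i r - c_i|\vartheta(x_i)|^2 = 0$ (its discriminant $s_i^2 + 4c_i|\vartheta(x_i)|^2$ is strictly positive). If $\vartheta(x_i)=0$, the equation $z_iw_i=0$ splits into two branches, and the sign of $s_i$ selects a unique one: $w_i=0,\,|z_i|^2=s_i$ when $s_i>0$; $z_i=0,\,c_i|w_i|^2=-s_i$ when $s_i<0$; and $z_i=w_i=0$ when $s_i=0$. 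In each case the $\rT_{\R}$-orbit in that factor is a single circle (or a point), so the descended map $\widetilde{\mathfrak{M}}^u_\tau(\alpha,\tilde{\beta})/\rA_{\R}\to \R^{3d}$ is a continuous bijection; continuity of the inverse is transparent from the explicit parametrization, and properness follows because $|p|\to\infty$ forces either $|x_i|\to\infty$ or $|s_i|\to\infty$, pushing representatives out of every compact subset of $\widetilde{X}^n_\vartheta$.

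For the stabilizer claim, note that the $\rT$-stabilizer at $(\vec{z},\vec{w},\vec{x})$ is generated by the coordinate subgroups $\C^{\times}_i$ for which $z_i=w_i=0$, so on the quotient by $\rK$ the $\rA$-stabilizer has Lie algebra $\mathrm{Span}\{u_i = \pi(e_i) : z_i = w_i = 0\}$. By the coordinate analysis, $z_i=w_i=0$ happens precisely when $s_i=0$ and $\vartheta(x_i)=0$, i.e., simultaneously $\langle a,u_i\rangle-\alpha_i=0$ (so $p$ lies on $H_{\R,i}$) and $\langle \tilde{b},u_i\rangle-\tilde{\beta}_i\in\Gamma$ (so $p$ lies on $H_{\C,i,\gamma}$ for some $\gamma$), which is exactly the condition $p\in \widetilde{\mathcal{A}}_{i,\gamma}$. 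The main obstacle I expect is not conceptual but bookkeeping: unlike the additive case, the conformal factor $c_i$ couples the Kähler form to the complex moment map value $x_i$, so one must check that the positivity of $c_i$ together with the zero structure of $\vartheta$ really reproduces the BD-type case analysis uniformly in $p$; the exponential decay of $|\vartheta(x_i)|$ as $|\Im x_i|\to\infty$ is precisely balanced by $c_i$ in the formula for $\widetilde{\omega}_{\R}$, which should suffice.
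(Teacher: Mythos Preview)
Your approach is correct and is precisely the adaptation of the argument in \cite[Theorem~3.1]{BD00} that the paper intends: the paper does not write out a proof of this statement, explicitly noting that ``we will omit the proofs since they are almost verbatim'' from \cite{BD00}. Your coordinate-wise case analysis (the quadratic in $r=|z_i|^2$ when $\vartheta(x_i)\neq 0$, and the sign-of-$s_i$ trichotomy when $\vartheta(x_i)=0$) is exactly the BD argument, with the only modification being the positive weight $c_i=\exp(-2\pi(\Im x_i)^2/\Im\tau)$ in front of $|w_i|^2$; as you observe, positivity of $c_i$ is all that is needed, so the remark about exponential decay of $\vartheta$ being balanced by $c_i$ is unnecessary for this statement (it becomes relevant only for metric questions, not for the set-theoretic homeomorphism and stabilizer computation).
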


The arrangement $\widetilde{\mathcal{A}}$ is called \textit{simple} if every subset of $m$-elements with nonempty intersection intersects in real codimension $3m$. The collection of vectors $u=\{u_1,\ldots,u_n\}$ is called \textit{unimodular} if every subset of $d$ linearly independent vectors in $u$ spans $\frak{a}_{\Z}$ over $\Z$.  

\begin{theorem} {\cite[Theorem ~3.2 \& 3.3]{BD00}}  \label{thm:smoothness}
	$\widetilde{\mathfrak{M}}^u_{\tau}(\alpha,\tilde{\beta})$ is an orbifold with at worst abelian quotient singularities if and only if every $\widetilde{\mathcal{A}}$ is simple. It is a smooth manifold if and only if, in addition, $u=\{u_1,\ldots,u_n\}$ is unimodular. 
\end{theorem}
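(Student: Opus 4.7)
The plan is to reduce this statement to the additive case treated in \cite{BD00} by establishing a local analytic isomorphism between $\widetilde{X}_{\vartheta}^n$ and $(T^*\C)^n$ as Hamiltonian $\rT$-manifolds. Since smoothness and the orbifold condition are purely local, and since the formation of the symplectic quotient commutes with passing to $\rT$-invariant analytic neighbourhoods, this will make the criteria on $\widetilde{\mathcal{A}}$ an immediate consequence of the Bielawski--Dancer analysis.

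First I would analyze $\widetilde{X}_{\vartheta}$ pointwise. Because $\vartheta$ has only simple zeros (by our smoothness hypothesis), the defining equation $zw-\vartheta(x)=0$ has non-vanishing differential everywhere, so $\widetilde{X}_{\vartheta}$ is smooth. Away from the loci $\{z=w=0\}$, the coordinate $x$ is a local coordinate and the surface is a $\C^{\times}$-fibration. At a fixed point of the $\C^{\times}$-action, i.e.\ a point where $z=w=0$ and $\vartheta(x_0)=0$, the implicit function theorem (using $\vartheta'(x_0)\ne 0$) lets us solve $x = x_0 + zw/\vartheta'(x_0) + O((zw)^2)$, so $(z,w)$ are local holomorphic coordinates. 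A direct computation shows that in these coordinates the residue form $\widetilde{\omega}_{\vartheta}$ becomes $dz\wedge dw$ up to a non-vanishing holomorphic factor, the $\C^{\times}$-action is the standard weight $(1,-1)$-action, and the moment map $\widetilde{\mu}_{\vartheta}=x$ becomes $zw/\vartheta'(x_0)$ plus higher order corrections. An equivariant Darboux/Moser argument then upgrades this to a genuine Hamiltonian $\C^{\times}$-isomorphism with an open subset of $T^*\C$.

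Next, taking the $n$-fold product gives a local Hamiltonian $\rT$-isomorphism between $\widetilde{X}_{\vartheta}^n$ and $(T^*\C)^n$. Consequently, the $\rK$-symplectic reduction defining $\widetilde{\mathfrak{M}}^u_{\tau}(\alpha,\tilde{\beta})$ is, in a neighbourhood of any point, literally the local model for an additive hypertoric variety. The hyperplane arrangement $\widetilde{\mathcal{A}}$ coincides with the one attached to this local additive model (infinite and $\Gamma^d$-periodic globally, but involving only finitely many $\widetilde{\mathcal{A}}_{i,\gamma}$ near any fixed point). By Theorem \ref{thm:homeo_stabilizer}, the stabilizer of $p$ in $\rA$ is the subtorus with Lie algebra spanned by $\{u_i : \widetilde{\mu}_{\rA}(p)\in \widetilde{\mathcal{A}}_{i,\gamma}\}$; using the short exact sequence $1\to\rK\to\rT\to\rA\to 1$, this translates into the $\rK$-stabilizer on the level set being isomorphic to the cokernel of the integer map $\Z^{|S|}\to \frak{a}_{\Z}$ sending $e_i\mapsto u_i$, where $S$ is the incident index set at $p$.

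Finally I would combine these ingredients. The stabilizer $\rK_p$ is finite iff the corresponding $u_i$'s are linearly independent, which is precisely the simplicity of $\widetilde{\mathcal{A}}$ (an $m$-fold intersection of real-codimension-$3$ subspaces has codimension $3m$ exactly when their defining normals are independent); this gives the orbifold criterion via the symplectic slice theorem. The stabilizer is trivial iff the independent $u_i$'s also generate a primitive sublattice of $\frak{a}_{\Z}$, which is the unimodularity condition; this gives the smoothness criterion. The converses follow by exhibiting points on the offending strata. The chief obstacle is the local analytic identification in the first step, and specifically showing that the residue symplectic form and moment map coming from the theta function can be holomorphically matched with the standard ones on $T^*\C$; once this is done, the rest of the argument is a faithful transcription of the Bielawski--Dancer proof to the periodic setting.
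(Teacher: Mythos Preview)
Your proposal is correct and, in spirit, matches the paper's approach: the paper actually omits the proof entirely, noting only that it is ``almost verbatim'' from \cite[Theorems 3.2 and 3.3]{BD00}. What you have written is a careful unpacking of why the Bielawski--Dancer argument transfers, so there is no conflict.

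The one difference worth noting is methodological. You first build an explicit local holomorphic Hamiltonian identification $\widetilde{X}_{\vartheta}\cong T^*\C$ near each $\C^\times$-fixed point (via the implicit function theorem and an equivariant Darboux step), and then invoke \cite{BD00} as a black box. The paper's ``verbatim'' remark instead intends that one rerun the BD00 stabilizer analysis directly on $\widetilde{X}_{\vartheta}^n$: since the $\rT$-stabilizer of a point is the coordinate subtorus indexed by $\{i:z_i=w_i=0\}$, and the moment map is the projection to the $x$-coordinates, the entire argument (which only uses this combinatorial stabilizer data and the identification of $\widetilde{\mu}_{\rA}$ with the orbit map) goes through without needing a symplectic normal form. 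Your route is slightly more work but has the virtue of making the reduction to \cite{BD00} literally an isomorphism of local models; the paper's implicit route avoids the holomorphic Darboux/Moser step, which in the holomorphic category is a little delicate to cite cleanly.

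One small imprecision: your claim that the $\rK$-stabilizer at a point with incident index set $S$ is ``the cokernel of $\Z^{|S|}\to\frak{a}_{\Z}$'' is not quite right as stated (that cokernel has a free part when $|S|<d$). What you want is that $\rK\cap\rT_S=\ker(\rT_S\to\rA)$, which is finite iff the $u_i$ for $i\in S$ are linearly independent, and trivial iff in addition they span a primitive sublattice of $\frak{a}_{\Z}$. This is exactly the simplicity/unimodularity dichotomy, so your conclusion stands.
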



Theorem \ref{thm:homeo_stabilizer} and \ref{thm:smoothness} are counterparts of  Theorem ~3.1, 3.2 and 3.3 in \cite{BD00}. We will omit the proofs since they are almost verbatim. Together the theorems imply:
\begin{corollary}{\cite[Corollary ~3.5]{BD00}} \label{cor:fixed_points}
Suppose $\widetilde{\mathfrak{M}}^u_{\tau}(\alpha,\tilde{\beta})$ is an orbifold, then 
\begin{itemize}
\item The set of fixed points for the $\rA$-action is in one-to-one correspondence with the set of intersection points of $d$ elements in $\widetilde{\mathcal{A}}$.
\item If $p\in \frak{a}^{\vee}_{\R}\oplus \frak{a}^{\vee}$ lies in exactly $r$ elements in $\widetilde{\mathcal{A}}$, then the stabilizer of a point in $\widetilde{\mu}^{-1}_{\rA}(p)$ is an $r$-dimensional subtorus in $\rA$. 
\end{itemize}
\end{corollary}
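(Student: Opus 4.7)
The plan is to deduce the corollary directly from Theorems \ref{thm:homeo_stabilizer} and \ref{thm:smoothness} by a dimension-counting argument, exactly as in \cite{BD00}. Since the moment map $\widetilde{\mu}_{\rA}$ descends to a homeomorphism $\widetilde{\mathfrak{M}}^u_{\tau}(\alpha,\tilde{\beta})/\rA_{\R}\to \R^{3d}$, the orbit space is parametrized by $p\in \frak{a}^{\vee}_{\R}\oplus \frak{a}^{\vee}$. An $\rA$-fixed point corresponds to an $\rA_{\R}$-orbit whose stabilizer is all of $\rA_{\R}$, equivalently the Lie algebra of the stabilizer has dimension $d$; by the second bullet of Theorem \ref{thm:homeo_stabilizer}, this is precisely the condition that $\{u_i : p\in \widetilde{\mathcal{A}}_{i,\gamma}\}$ spans $\frak{a}$.

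First I would handle the second bullet. Suppose $p$ lies in exactly $r$ elements $\widetilde{\mathcal{A}}_{i_1,\gamma_1},\ldots,\widetilde{\mathcal{A}}_{i_r,\gamma_r}$. Since $\widetilde{\mathfrak{M}}^u_{\tau}(\alpha,\tilde{\beta})$ is an orbifold, Theorem \ref{thm:smoothness} gives simplicity, so this intersection has real codimension $3r$ in $\frak{a}^{\vee}_{\R}\oplus\frak{a}^{\vee}$. Each $\widetilde{\mathcal{A}}_{i,\gamma} = H_{\R,i}\times H_{\C,i,\gamma}$ cuts out the single linear condition $\langle -, u_i\rangle = \text{const}$ (once in the real factor and once in the complex factor, contributing real codimension $3$). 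The total codimension being $3r$ forces the $r$ linear functionals $\langle -, u_{i_1}\rangle, \ldots, \langle -, u_{i_r}\rangle$ to be linearly independent on $\frak{a}^{\vee}_{\R}$, which is equivalent to $u_{i_1},\ldots, u_{i_r}$ being linearly independent in $\frak{a}$. Therefore the stabilizer subtorus, which by Theorem \ref{thm:homeo_stabilizer} has Lie algebra $\Span\{u_{i_1},\ldots,u_{i_r}\}$, is exactly $r$-dimensional.

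The first bullet then follows immediately: a fixed point requires stabilizer dimension $d$, so by the above we need $p$ to lie in exactly $d$ subspaces of $\widetilde{\mathcal{A}}$ (fewer than $d$ cannot span $\frak{a}$; more than $d$ would violate simplicity since $d$ independent $u_i$'s would already give codimension $3d$, filling the ambient space, so a $(d+1)$-fold intersection would have to be empty). Conversely, given any intersection point of $d$ elements in $\widetilde{\mathcal{A}}$, simplicity ensures the corresponding $u_i$'s are linearly independent hence span $\frak{a}$, so the stabilizer is all of $\rA$. Combined with the injectivity of $\widetilde{\mu}_{\rA}$ on orbits, this establishes the bijection.

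I do not anticipate any essential obstacle; the argument is purely combinatorial once Theorems \ref{thm:homeo_stabilizer} and \ref{thm:smoothness} are in hand, and the passage to the elliptic/periodic setting has already been absorbed into the indexing of $\widetilde{\mathcal{A}}$ by the lattice $\Gamma$. The only minor care needed is to verify that ``exactly $r$'' in the second bullet is compatible with the indexing: each periodic translate $\widetilde{\mathcal{A}}_{i,\gamma}$ is counted separately, but since $H_{\C,i,\gamma}\cap H_{\C,i,\gamma'}=\emptyset$ for $\gamma\ne\gamma'$ (parallel hyperplanes), at most one $\gamma$ per $i$ contributes at any given $p$, so $r\leq n$ as expected.
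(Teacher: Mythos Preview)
Your proposal is correct and matches the paper's approach exactly: the paper gives no explicit proof, simply stating that Theorems~\ref{thm:homeo_stabilizer} and~\ref{thm:smoothness} together imply the corollary (citing \cite[Corollary~3.5]{BD00}), and your dimension-counting argument is precisely the standard deduction from those two results.
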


\begin{theorem} \label{thm:topological_type}
Suppose $\widetilde{\mathfrak{M}}^u_{\tau}(\alpha,\tilde{\beta})$ and $\widetilde{\mathfrak{M}}^u_{\tau}(\alpha',\tilde{\beta}')$ are both generic, then
$\widetilde{\mathfrak{M}}^u_{\tau}(\alpha,\tilde{\beta})$ is homeomorphic to $\widetilde{\mathfrak{M}}^u_{\tau}(\alpha',\tilde{\beta}')$. 
\end{theorem}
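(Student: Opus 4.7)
The plan is to adapt the Bielawski--Dancer argument \cite{BD00} to the periodic setting. The key structural input is Theorem~\ref{thm:homeo_stabilizer}: the e-moment map $\widetilde\mu_{\rA}$ induces a homeomorphism $\widetilde{\mathfrak M}^u_\tau(\alpha,\tilde\beta)/\rA_\R \isoto \R^{3d}$ whose point fibers are compact $\rA_\R$-orbits with stabilizers determined entirely by the incidence pattern with the arrangement $\widetilde{\mathcal A}$. Consequently the homeomorphism type of $\widetilde{\mathfrak M}^u_\tau(\alpha,\tilde\beta)$ depends only on the ambient isotopy class of the periodic codimension-three arrangement $\widetilde{\mathcal A}\subset \frak a_\R^\vee\oplus \frak a^\vee$, so it suffices to join any two generic parameters by a path of arrangements whose combinatorial type is constant.

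To this end we analyze the parameter space $\Lambda=\frak k_\R^\vee\times \frak k^\vee$, of real dimension $3k$. By Theorem~\ref{thm:smoothness}, $(\alpha,\tilde\beta)\in\Lambda$ is generic precisely when $\widetilde{\mathcal A}$ is simple. For each $(d+1)$-tuple of indices $(i_1,\gamma_1),\ldots,(i_{d+1},\gamma_{d+1})$ with $u_{i_1},\ldots,u_{i_{d+1}}$ spanning $\frak a$, the condition $\bigcap_{j}\widetilde{\mathcal A}_{i_j,\gamma_j}\neq\emptyset$ amounts to one real equation on $\alpha$ together with one complex equation on $\tilde\beta$, and hence cuts out a real codimension-three affine subspace of $\Lambda$. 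Taking the union over all such finite tuples yields at most countably many codimension-$\geq 3$ affine subvarieties; their complement $\Lambda_\mathrm{reg}\subset \Lambda$ is open, dense, and path-connected (since a countable union of real codimension-$\geq 3$ affine subspaces cannot disconnect $\R^{3k}$).

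Choose a smooth path $(\alpha_t,\tilde\beta_t)_{t\in[0,1]}$ in $\Lambda_\mathrm{reg}$ joining $(\alpha,\tilde\beta)$ and $(\alpha',\tilde\beta')$, and form the total family
\[
\widetilde{\mathcal X}:=\{(p,t)\in \widetilde X_\vartheta^n\times [0,1] \mid (\widetilde\mu_{\R,\rK},\widetilde\mu_{\C,\rK})(p)=(\alpha_t,\tilde\beta_t)\}/\rK_\R,
\]
with its natural projection $\pi:\widetilde{\mathcal X}\to [0,1]$ whose fiber over $t$ is $\widetilde{\mathfrak M}^u_\tau(\alpha_t,\tilde\beta_t)$. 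Along the path the simplicity of $\widetilde{\mathcal A}_t$ is preserved, so $\pi$ is a submersion with all fibers smooth (Theorem~\ref{thm:smoothness}), and the combined map $(\widetilde\mu_{\rA},\pi):\widetilde{\mathcal X}\to \R^{3d}\times [0,1]$ is proper with compact torus fibers. An Ehresmann-type trivialization applied to this proper submersion then produces the desired homeomorphism $\widetilde{\mathfrak M}^u_\tau(\alpha,\tilde\beta)\cong \widetilde{\mathfrak M}^u_\tau(\alpha',\tilde\beta')$.

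The main obstacle we anticipate is this last trivialization step. While the motion of $\widetilde{\mathcal A}_t$ inside $\R^{3d}$ manifestly gives an ambient isotopy of arrangements, the $\rA_\R$-torus fibration over the open strata has nontrivial monodromy prescribed by the unimodular datum $u$, and we must check that this monodromy is transported consistently through the family. This is handled in the classical case \cite{BD00} by observing that the local model near each intersection stratum is the standard hyperk\"ahler quotient $(T^*\C^r)\threebar \rT_\R^r$, whose topology is independent of the parameters; the global trivialization is then built stratum by stratum and patched. The periodicity of $\widetilde{\mathcal A}$ under $\Gamma^d$ introduces no new difficulty since the $\Gamma^d$-action on $\widetilde{\mathfrak M}^u_\tau$ commutes with both $\rA_\R$ and the family structure, so the construction can be carried out $\Gamma^d$-equivariantly and descends without modification to the elliptic hypertoric variety itself.
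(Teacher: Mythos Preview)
Your setup coincides with the paper's: both observe that the non-generic locus in $\frak k_\R^\vee\times\frak k^\vee$ has real codimension three and join the two parameters by a path $\ell$ in its complement. The divergence is in the trivialization step, and there the proposal has a genuine gap.

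Neither of the maps you invoke satisfies the hypotheses of Ehresmann's theorem. The projection $\pi:\widetilde{\mathcal X}\to[0,1]$ is a submersion but not proper, since each fiber surjects onto $\R^{3d}$ via $\widetilde\mu_{\rA}$; the combined map $(\widetilde\mu_{\rA},\pi)$ is proper but not a submersion, since the torus fibers drop dimension along the strata of $\widetilde{\mathcal A}_t$. Your fallback to the local-model patching of \cite{BD00} is exactly what the paper warns against: it remarks explicitly that the Bielawski--Dancer proof of their Theorem~6.1 relies on hyperk\"ahler rotation, which is unavailable on $\widetilde X_\vartheta^n$, and therefore an alternative argument is required. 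A stratified isotopy lemma (Thom--Mather) might rescue your approach, but that would require verifying Whitney conditions and is not what you have written.

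The paper instead works \emph{before} taking the $\rK_\R$-quotient. It forms the smooth level-set family $\cM_\ell=\widetilde\mu_\rK^{-1}(\ell)\subset\widetilde X_\vartheta^n$, lets $F:\cM_\ell\to\R$ be the projection along $\ell$, and takes the gradient $\nabla F$ with respect to the restriction $g$ of the flat metric on $\C^{3n}$. Non-compactness again obstructs completeness of this vector field; the paper handles it by passing to the conformally rescaled complete metric $g'=g/(1+\rho)^2$, where $\rho$ is the $g$-distance to a basepoint, and checking directly that $\nabla F$ is $g'$-bounded and hence complete (citing \cite{Gli97}). Since $g$ and $F$ are $\rK_\R$-invariant, the flow is $\rK_\R$-equivariant and carries one level set diffeomorphically to the other; quotienting gives the desired homeomorphism. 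This replaces your Ehresmann appeal with an explicit, self-contained gradient-flow argument.
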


Theorem \ref{thm:topological_type} is the counterpart of Theorem 6.1 in \cite{BD00}. Their argument involves hyperkähler rotation which is not available in our situation. Thus, we present an alternative proof here.

\begin{proof}
Let's set $\widetilde{\mu}_{\rK}=(\widetilde{\mu}_{\R,\rK},\widetilde{\mu}_{\C,\rK})$. We have $\widetilde{\mathfrak{M}}^u_{\tau}(\alpha,\tilde{\beta})=\widetilde{\mu}_{\rK}^{-1}(\alpha,\tilde{\beta})/\rK_{\R} $ and $\widetilde{\mathfrak{M}}^u_{\tau}(\alpha',\tilde{\beta}')=\widetilde{\mu}_{\rK}^{-1}(\alpha',\tilde{\beta}')/\rK_{\R}$. Therefore, it suffices to show that the level sets $\widetilde{\mu}_{\rK}^{-1}(\alpha,\tilde{\beta})$ and $\widetilde{\mu}_{\rK}^{-1}(\alpha',\tilde{\beta}')$ are $\rK_{\R}$-equivariantly diffeomorphic. The diffeomorphism can be constructed as follows.

Fisrt, we note that the critical values of $\widetilde{\mu}_{\rK}$ coincides with the set in $\frak{k}_{\R}^{\vee}\times \mathfrak{k}=\R^{3k}$ for which $d+1$ elements in $\widetilde{\mathcal{A}}$ have nonempty intersection, which is in real codimension three. Let $U$ denote the complement of this set. and note that we have $(\alpha,\tilde{\beta}), (\alpha',\tilde{\beta}')\in U$. Let $\ell':\R\to \R^{3k}$ be the straight line passing through $(\alpha,\tilde{\beta})$ and $(\alpha',\tilde{\beta}')$. 
If $\ell'$ is in $U$, then we set $\ell=\ell'$, otherwise, let $\ell$ be a small deformation of $\ell'$ in $U$ passing through $(\alpha,\tilde{\beta})$ and $(\alpha',\tilde{\beta}')$. Let $\cM_{\ell} \subset \widetilde{X}^n$ be the submanifold
\[
\cM_{\ell}=\{p\in \widetilde{X}^n| \widetilde{\mu}_{\rK}(p)\in \ell\}. 
\]
Let $F:\cM_{\ell}\to \R$ be the function defined by $F= P_{\vec{v}}\circ  t_{(\alpha,\tilde{\beta})}\circ \widetilde{\mu}_{\rK}|_{\cM_{\ell}}$ where $t_{(\alpha,\tilde{\beta})}:\R^{3k}\to \R^{3k}$ is the translation by $(\alpha,\tilde{\beta})$ and $P_{\vec{v}}: \R^{3k}\to \R$ is the projection to the line in the direction of $\vec{v}=(\alpha'-\alpha,\tilde{\beta}'-\tilde{\beta})$. By construction, $F$ has no critical points. 

Now, let $g$ be the restriction to $\cM_{\ell}$ of the standard metric on $\C^{3n}$, and let $\nabla F$ be the gradient vector field of $F$ with respect to $g$. We choose a point $p_0\in \cM_{\ell}$ and define a function $\rho:\cM_{\ell}\to [0,\infty)$ by the Riemannian distance to $p_0$ with respect to $g$. Since $g$ is a complete metric, the auxillary metric $g'$ defined by 
\[
g'=\frac{g}{(1+\rho)^2}
\]
is also complete (see \cite[Theorem ~1.2 \& 1.3]{Gli97}). By direct computation, we see that $\nabla F$ is bounded with respect to $g'$. This means $\nabla F$ is a complete vector field (see \cite[Theorem ~1.1]{Gli97}). Moreover, since both $g$ and $F$ are $\rK_{\R}$-invariant, $\nabla F$ is also $\rK_{\R}$-invariant. The flow of $\nabla F$ gives a $\rK_{\R}$-equivariant diffeomorphism between $\widetilde{\mu}_{\rK}^{-1}(\alpha,\tilde{\beta})$ and $\widetilde{\mu}_{\rK}^{-1}(\alpha',\tilde{\beta}')$.
\end{proof}

When $\tilde{\beta}$ is a lift of $\beta$, it is easy to see that $\widetilde{\mathfrak{M}}^u_{\tau}(\alpha,\tilde{\beta})$ is a covering space for $\mathfrak{M}^u_{\tau}(\alpha,\beta)$. The first part of Theorem \ref{thm:homeo_stabilizer} implies that  $\widetilde{\mathfrak{M}}^u_{\tau}(\alpha,\tilde{\beta})$ is connected. This also means $\mathfrak{M}^u_{\tau}(\alpha,\beta)$ is connected. The following proposition shows that $\widetilde{\mathfrak{M}}^u_{\tau}(\alpha,\tilde{\beta})$ is a universal cover for $\mathfrak{M}^u_{\tau}(\alpha,\beta)$.

\begin{prop}
$\pi_1(\widetilde{\mathfrak{M}}^u_{\tau}(\alpha,\tilde{\beta}))=0$. 
\end{prop}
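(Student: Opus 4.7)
The plan is to exploit the $\rA_{\R}$-quotient homeomorphism $\widetilde{\mu}_{\rA}:\widetilde{\mathfrak{M}}/\rA_{\R}\isoto \R^{3d}$ of Theorem \ref{thm:homeo_stabilizer} together with the local hyperk\"ahler model near the periodic arrangement. By Theorem \ref{thm:topological_type} it suffices to verify the claim for one generic choice of $(\alpha,\tilde{\beta})$; connectedness of $\widetilde{\mathfrak{M}}:=\widetilde{\mathfrak{M}}^u_{\tau}(\alpha,\tilde{\beta})$ is already part of Theorem \ref{thm:homeo_stabilizer}.

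Set $U=\R^{3d}\setminus\widetilde{\mathcal{A}}$ and $\widetilde{\mathfrak{M}}^{\circ}=\widetilde{\mu}_{\rA}^{-1}(U)$. By Corollary \ref{cor:fixed_points} the $\rA_{\R}$-action is free on $\widetilde{\mathfrak{M}}^{\circ}$, so $\widetilde{\mathfrak{M}}^{\circ}\to U$ is a principal $\rA_{\R}$-bundle. Since $\widetilde{\mathcal{A}}$ is a locally finite collection of real codimension-three affine subspaces of $\R^{3d}$, both loops ($1$-spheres) and their bounding $2$-disks can be perturbed off $\widetilde{\mathcal{A}}$ by transversality, so $\pi_1(U)=\pi_1(\R^{3d})=0$. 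The long exact sequence of the principal bundle therefore collapses to a surjection
\[
\iota_*\colon \pi_1(\rA_{\R})\twoheadrightarrow \pi_1(\widetilde{\mathfrak{M}}^{\circ}).
\]
Again by Corollary \ref{cor:fixed_points}, the complement $\widetilde{\mathfrak{M}}\setminus\widetilde{\mathfrak{M}}^{\circ}$ is a locally finite union of submanifolds of real codimension at least four (a codim-$3r$ stratum of $\widetilde{\mathcal{A}}$ has $r$-dimensional stabilizer, contributing preimage codimension $4r$), so the inclusion $\widetilde{\mathfrak{M}}^{\circ}\hookrightarrow \widetilde{\mathfrak{M}}$ induces an isomorphism on $\pi_1$.

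It remains to show each generator $[U(1)_{u_i}]\in \pi_1(\rA_{\R})=\mathfrak{a}_{\Z}$ maps to zero in $\pi_1(\widetilde{\mathfrak{M}}^{\circ})$. Near a generic point of the hyperplane preimage $\widetilde{\mu}_{\rA}^{-1}(\widetilde{\mathcal{A}}_{i,\gamma})$, the defining equation $z_iw_i=\vartheta(x_i)$ linearises and an $\rA_{\R}$-equivariant slice argument (transported from the ordinary hypertoric setting \cite[\S 3]{BD00} using the $\Gamma^d$-equivariance of our construction) identifies a neighborhood with $T^*\C\times W$, where $U(1)_{u_i}$ acts on $T^*\C$ with weights $\pm 1$ and trivially on the slice $W$. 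The orbit $U(1)_{u_i}\cdot(z_0,0)$ with $z_0\neq 0$ lies in $(T^*\C\setminus\{0\})\times W\subset \widetilde{\mathfrak{M}}^{\circ}$ and bounds a disk there, since $T^*\C\setminus\{0\}\simeq S^3$ is simply connected. Because $\{u_1,\ldots,u_n\}$ spans $\mathfrak{a}_{\Z}$ over $\Z$, these circles generate $\pi_1(\rA_{\R})$, so $\iota_*=0$ and therefore $\pi_1(\widetilde{\mathfrak{M}})=\pi_1(\widetilde{\mathfrak{M}}^{\circ})=0$. The main technical obstacle is producing the $\rA_{\R}$-equivariant local slice model uniformly along the $\Gamma^d$-periodic hyperplane preimages, which hinges on matching the elliptic coordinates with the flat hyperk\"ahler model near each zero of $\vartheta$.
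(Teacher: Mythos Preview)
Your argument is correct and reaches the conclusion, but it is organised differently from the paper's proof. The paper argues more directly: since $\widetilde{\mathfrak{M}}/\rA_{\R}\cong\R^{3d}$, every loop $\tilde\gamma$ in $\widetilde{\mathfrak{M}}$ has contractible image, so (treating the quotient as an $\rA_{\R}$-``fibration'') one first homotopes $\tilde\gamma$ into a single $\rA_{\R}$-orbit, and then slides that orbit along a path in $\R^{3d}$ toward an $\rA$-fixed point (which exists by Corollary~\ref{cor:fixed_points}), shrinking it to a point. You instead isolate the free locus $\widetilde{\mathfrak{M}}^{\circ}$, use the long exact sequence of the honest principal bundle $\widetilde{\mathfrak{M}}^{\circ}\to U$ to reduce to killing the generators $[U(1)_{u_i}]$ of $\pi_1(\rA_{\R})$, and kill each one via a local model near the corresponding hyperplane. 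Both proofs rest on the same mechanism---orbit loops contract because of the higher-stabiliser strata---but yours makes the codimension bookkeeping and the role of the spanning condition on $\{u_i\}$ explicit, while the paper's is shorter and appeals to a genuine fixed point rather than the individual hyperplanes.

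One remark on the obstacle you flag at the end: you do not need a \emph{uniform} model along the entire $\Gamma^d$-periodic family. For each $i$ it suffices to know the slice representation of $U(1)_{u_i}$ at a single generic point of a single preimage $\widetilde{\mu}_{\rA}^{-1}(\widetilde{\mathcal A}_{i,\gamma})$. Since the action is holomorphic symplectic and $u_i$ is primitive, the normal weights are $\pm 1$, the punctured slice is $\C^2\setminus\{0\}\simeq S^3$, and the orbit circle bounds. This is the standard equivariant slice theorem; no global matching with a flat hyperk\"ahler model is required.
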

\begin{proof}
Let's consider the map 
\[
\rho:\pi_1(\widetilde{\mathfrak{M}}^u_{\tau}(\alpha,\tilde{\beta}))\to \pi_1(\widetilde{\mathfrak{M}}^u_{\tau}(\alpha,\tilde{\beta})/\rA_{\R})
\] 
on fundamental groups induced by the quotient map. By Theorem \ref{thm:homeo_stabilizer}, the latter group is simply $\pi_1(\R^{3d})=0$. Let $\tilde{\gamma}$ be a loop in $\widetilde{\mathfrak{M}}^u_{\tau}(\alpha,\tilde{\beta})$ covering a contractible loop $\gamma$ in $\widetilde{\mathfrak{M}}^u_{\tau}(\alpha,\tilde{\beta})/\rA_{\R}$. Since $\widetilde{\mathfrak{M}}^u_{\tau}(\alpha,\tilde{\beta})\to \widetilde{\mathfrak{M}}^u_{\tau}(\alpha,\tilde{\beta})/\rA_{\R}$ is a $\rA_{\R}$-fibration, we may assume $\gamma$ is constant and $\tilde{\gamma}$ is contained in a $\rA_{\R}$-orbit. By Corollary \ref{cor:fixed_points}, the $\rA_{\R}$-action on $\widetilde{\mathfrak{M}}^u_{\tau}(\alpha,\tilde{\beta})$ has at least one fixed point and is 
is free on an open dense subset. We can therefore contract the loop $\tilde{\gamma}$ by deforming it to a fixed point through $\rA_{\R}$-orbits. This means $\rho$ is an isomorphism.
\end{proof}

Suppose $\tilde{\beta}$ is a lift of $\beta$, then the following properties for $\mathfrak{M}^u_{\tau}(\alpha,\beta)$ follows from the properties for its universal cover $\widetilde{\mathfrak{M}}^u_{\tau}(\alpha,\tilde{\beta})$ listed above.

\begin{theorem} \label{thm:homeo_stabilizer_ell}
	Let's set $\mu_{\rA}=(\mu_{\R,\rA},\mu_{\C,\rA})$. Then
	\begin{itemize}
		\item The map $\mu_{\rA}: \mathfrak{M}^u_{\tau}(\alpha,\beta) \to \frak{a}^{\vee}_{\R}\times E_{\tau}^d$ descends to a homeomorphism $\mathfrak{M}^u_{\tau}(\alpha,\beta)/\rA_{\R}\to \frak{a}^{\vee}_{\R}\times E_{\tau}^d$. 
		\item For $p\in \frak{a}^{\vee}_{\R}\times E_{\tau}^d$, the stabilizer of a point in $\mu^{-1}_{\rA}(p)$ is the subtorus in $\rA$ whose Lie algebra is spanned by the vectors $u_i$ for which $p\in \mathcal{A}_{i}$.
	\end{itemize}
\end{theorem}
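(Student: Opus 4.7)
The plan is to derive both statements from the corresponding assertions for the universal cover $\widetilde{\mathfrak{M}}^u_{\tau}(\alpha,\tilde{\beta})$ established in Theorem \ref{thm:homeo_stabilizer}, together with the fact that $\mathfrak{M}^u_{\tau}(\alpha,\beta)=\widetilde{\mathfrak{M}}^u_{\tau}(\alpha,\tilde{\beta})/\Gamma^d$, where the $\Gamma^d$-action is free, proper, and commutes with the residual $\rA_{\R}$-action.

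For the first item, I would start by recording the compatibility of moment maps under the covering: by construction, $\widetilde{\mu}_{\R,\rA}$ is $\Gamma^d$-invariant and descends to $\mu_{\R,\rA}$, whereas $\widetilde{\mu}_{\C,\rA}$ intertwines the $\Gamma^d$-action on $\widetilde{\mathfrak{M}}^u_{\tau}(\alpha,\tilde{\beta})$ with the translation action of $\Gamma^d$ on $\frak{a}^{\vee}$, so that it descends to $\mu_{\C,\rA}$ with target $E_{\tau}^d=\frak{a}^{\vee}/\Gamma^d$. Since the actions of $\rA_{\R}$ and $\Gamma^d$ commute and $\Gamma^d$ acts freely, we obtain
\[
\mathfrak{M}^u_{\tau}(\alpha,\beta)/\rA_{\R} \;=\; \widetilde{\mathfrak{M}}^u_{\tau}(\alpha,\tilde{\beta})\big/(\rA_{\R}\times \Gamma^d) \;\cong\; (\frak{a}^{\vee}_{\R}\oplus\frak{a}^{\vee})/\Gamma^d \;=\; \frak{a}^{\vee}_{\R}\times E_{\tau}^d,
\]
where the middle identification uses Theorem \ref{thm:homeo_stabilizer} applied to the universal cover, and the induced map on the quotient is precisely $\mu_{\rA}$. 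One then verifies that this map is a homeomorphism (continuity and bijectivity are immediate; properness follows from the properness of $\widetilde{\mu}_{\rA}/\rA_{\R}$ on a compact fundamental domain for $\Gamma^d$).

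For the second item, since the $\Gamma^d$-action on $\widetilde{\mathfrak{M}}^u_{\tau}(\alpha,\tilde{\beta})$ is free and commutes with $\rA$, the covering projection identifies the $\rA$-stabilizer of any point $p\in\mu_{\rA}^{-1}(q)$ with the $\rA$-stabilizer of any lift $\tilde{p}\in\widetilde{\mu}_{\rA}^{-1}(\tilde{q})$ for some (any) lift $\tilde{q}$ of $q$. By Theorem \ref{thm:homeo_stabilizer}, the latter is the subtorus of $\rA$ whose Lie algebra is spanned by those $u_i$ with $\tilde{q}\in \widetilde{\mathcal{A}}_{i,\gamma}$ for some $\gamma\in\Gamma$. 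Under the projection $\frak{a}^{\vee}_{\R}\oplus\frak{a}^{\vee}\to \frak{a}^{\vee}_{\R}\times E_{\tau}^d$, the union $\bigsqcup_{\gamma}\widetilde{\mathcal{A}}_{i,\gamma}$ maps exactly onto $\mathcal{A}_i$ (compare \eqref{eq:ell_hyperplane} with the description of $H_{\C,i,\gamma}$), so the condition $\tilde{q}\in\widetilde{\mathcal{A}}_{i,\gamma}$ for some $\gamma$ is equivalent to $q\in \mathcal{A}_i$, independent of the choice of lift.

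The only non-routine point I anticipate is checking that the descended map $\mu_{\rA}/\rA_{\R}$ is actually a homeomorphism rather than merely a continuous bijection; this is where the freeness and properness of the $\Gamma^d$-action must be used carefully, but it follows formally once the covering structure is in place, so I do not expect any genuine obstacle.
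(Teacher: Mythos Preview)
Your proposal is correct and follows exactly the approach indicated in the paper: the paper does not give a separate proof of this theorem but simply states that it ``follows from the properties for its universal cover $\widetilde{\mathfrak{M}}^u_{\tau}(\alpha,\tilde{\beta})$ listed above,'' and your sketch spells out precisely how to pass from Theorem~\ref{thm:homeo_stabilizer} to the elliptic statement via the free, proper $\Gamma^d$-action commuting with $\rA_{\R}$. Your added detail on matching the arrangements $\widetilde{\mathcal{A}}_{i,\gamma}$ with $\mathcal{A}_i$ under the quotient is a welcome clarification that the paper leaves implicit.
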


The arrangement $\mathcal{A}$ is called \textit{simple} if every subset of $m$ elements with nonempty intersection intersects in real codimension $3m$.

\begin{theorem} \label{thm:smoothness_ell}
	$\mathfrak{M}^u_{\tau}(\alpha,\beta)$ is an orbifold with at worst abelian quotient singularities if and only if $\mathcal{A}$ is simple. It is a smooth manifold if and only if, in addition, $u=\{u_1,\ldots,u_n\}$ is unimodular
\end{theorem}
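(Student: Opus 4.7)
The plan is to reduce Theorem \ref{thm:smoothness_ell} to its universal-cover counterpart, Theorem \ref{thm:smoothness}, by descending both the geometric conclusion and the combinatorial hypothesis through the $\Gamma^d$-quotient. The preceding proposition already established that $\widetilde{\mathfrak{M}}^u_{\tau}(\alpha,\tilde{\beta}) \to \mathfrak{M}^u_{\tau}(\alpha,\beta)$ is a covering map, with $\Gamma^d$ acting freely and properly discontinuously on the relevant semistable locus. Since smoothness and the property of being an orbifold with at worst abelian quotient singularities are purely local, they are preserved in both directions by this covering; hence $\mathfrak{M}^u_{\tau}(\alpha,\beta)$ is smooth (respectively an orbifold with at worst abelian quotient singularities) if and only if $\widetilde{\mathfrak{M}}^u_{\tau}(\alpha,\tilde{\beta})$ is.

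Next I would identify $\widetilde{\mathcal{A}}$ with the pullback of $\mathcal{A}$ along the natural covering $\mathfrak{a}^{\vee}_{\R} \oplus \mathfrak{a}^{\vee} \to \mathfrak{a}^{\vee}_{\R} \times E_{\tau}^d$, which is the identity on the first factor and the standard universal cover on the second. Indeed, for each $i$ the elliptic hyperplane $H_{\tau,i} \subset E_{\tau}^d$ lifts to precisely the periodic family $\{H_{\C,i,\gamma}\}_{\gamma \in \Gamma}$ appearing in the definition of $\widetilde{\mathcal{A}}_{i,\gamma}$. Since the real codimension of an intersection is a local invariant and the covering map is a local diffeomorphism, any $m$ members of $\{\mathcal{A}_i\}$ meet in real codimension $3m$ at a point $p$ if and only if the corresponding $m$ members of $\{\widetilde{\mathcal{A}}_{i,\gamma}\}$ meet in real codimension $3m$ at some (equivalently any) lift of $p$. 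Hence $\mathcal{A}$ is simple if and only if $\widetilde{\mathcal{A}}$ is.

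Combining these two reductions with Theorem \ref{thm:smoothness} then yields the orbifold half of the statement, and the additional unimodularity criterion for smoothness transfers verbatim because it only concerns the fixed vectors $u_1,\ldots,u_n$. I do not foresee a serious obstacle: once one accepts that simplicity on $E_{\tau}^d$ is a purely local condition at each intersection point, the remaining work is bookkeeping around the free $\Gamma^d$-action, and all the genuinely geometric input is already encapsulated in Theorem \ref{thm:smoothness}.
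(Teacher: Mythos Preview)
Your proposal is correct and follows exactly the paper's approach: the paper gives no separate proof for this theorem, stating only that it (together with Theorem~\ref{thm:homeo_stabilizer_ell}) ``follows from the properties for its universal cover $\widetilde{\mathfrak{M}}^u_{\tau}(\alpha,\tilde{\beta})$ listed above,'' i.e., from Theorem~\ref{thm:smoothness} via the free $\Gamma^d$-quotient. Your write-up supplies the details the paper leaves implicit, including the correspondence between simplicity of $\mathcal{A}$ and of $\widetilde{\mathcal{A}}$; you might note explicitly that two lifts $\widetilde{\mathcal{A}}_{i,\gamma}$ and $\widetilde{\mathcal{A}}_{i,\gamma'}$ with the same index $i$ are disjoint, so distinct members of an intersecting family upstairs always have distinct indices $i$ and hence descend to distinct members of $\mathcal{A}$.
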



Theorem \ref{thm:homeo_stabilizer_ell} and \ref{thm:smoothness_ell} together imply
\begin{corollary}
	Suppose	$\mathfrak{M}^u_{\tau}(\alpha,\beta)$ is an orbifold, then 
	\begin{itemize}
		\item The set of fixed points for the $\rA$-action is in one-to-one correspondence with the set of intersection points of $d$ elements in $\mathcal{A}$.
		\item If $p\in \frak{a}^{\vee}_{\R}\oplus \frak{a}^{\vee}$ lies in exactly $r$ elements in $\mathcal{A}$, then the stabilizer of a point in $\mu^{-1}_{\rA}(p)$ is an $r$-dimensional subtorus in $\rA$. 
	\end{itemize}
\end{corollary}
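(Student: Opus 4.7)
The plan is to deduce this corollary as a direct consequence of Theorem \ref{thm:homeo_stabilizer_ell} (the stabilizer description) and Theorem \ref{thm:smoothness_ell} (the simpleness criterion), in exact parallel to how the corollary for the universal cover $\widetilde{\mathfrak{M}}^u_{\tau}(\alpha,\tilde{\beta})$ was derived from Theorem \ref{thm:homeo_stabilizer} and Theorem \ref{thm:smoothness}. In fact, one can even bypass a direct argument by pushing the already-established corollary for $\widetilde{\mathfrak{M}}^u_{\tau}(\alpha,\tilde{\beta})$ down through the covering map $\widetilde{\mathfrak{M}}^u_{\tau}(\alpha,\tilde{\beta})\to \mathfrak{M}^u_{\tau}(\alpha,\beta)$ and noting that the arrangement $\widetilde{\mathcal{A}}$ is just the lift of $\mathcal{A}$ to the universal cover of $\frak{a}^{\vee}_{\R}\times E_{\tau}^d$.

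For the first bullet, I would argue as follows. A point of $\mathfrak{M}^u_{\tau}(\alpha,\beta)$ is fixed by $\rA$ precisely when its stabilizer is the full torus $\rA$. By the second part of Theorem \ref{thm:homeo_stabilizer_ell}, this is equivalent to the vectors $\{u_i : p\in \mathcal{A}_i\}$ spanning $\frak{a}$, where $p=\mu_{\rA}(\text{point})$. Hence we need $p$ to lie in at least $d$ elements of $\mathcal{A}$. Conversely, if $p$ lies in exactly $r$ elements of $\mathcal{A}$, the simpleness assumption (guaranteed by Theorem \ref{thm:smoothness_ell} since $\mathfrak{M}^u_{\tau}(\alpha,\beta)$ is an orbifold) forces the intersection to have real codimension $3r$ inside the real $3d$-dimensional space $\frak{a}^{\vee}_{\R}\times E_{\tau}^d$, giving $r\leq d$. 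Combining, $r=d$ and the intersection is a single point, so fixed-point orbits are in bijection with $d$-fold intersection points of $\mathcal{A}$.

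For the second bullet, the stabilizer description in Theorem \ref{thm:homeo_stabilizer_ell} already says the stabilizer is the subtorus whose Lie algebra is spanned by the $u_i$ with $p\in \mathcal{A}_i$. The only thing to verify is that when $p$ lies in exactly $r$ elements, the corresponding $r$ vectors $u_i$ are linearly independent, so that the subtorus has dimension exactly $r$. This is precisely where simpleness enters: each $\mathcal{A}_i$ has real codimension $3$, and if the $r$ vectors $u_{i_1},\ldots,u_{i_r}$ were linearly dependent, the intersection $\bigcap_j \mathcal{A}_{i_j}$ would have real codimension strictly less than $3r$, contradicting simpleness.

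I expect no serious obstacle: both statements are formal combinatorial consequences of the two preceding theorems, and the argument is the standard one used in the additive hypertoric setting of \cite{BD00}. The only point requiring mild care is the switch from $\frak{a}^{\vee}_{\R}\oplus\frak{a}^{\vee}$ in the universal-cover version to $\frak{a}^{\vee}_{\R}\times E_{\tau}^d$ here, but since simpleness is a local condition on the arrangement, this substitution goes through without change.
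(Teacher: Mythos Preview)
Your proposal is correct and matches the paper's approach: the paper simply states that the corollary follows from Theorem \ref{thm:homeo_stabilizer_ell} and Theorem \ref{thm:smoothness_ell} together, without writing out any details, and your argument is precisely the standard unpacking of that implication (and of the parallel Corollary \ref{cor:fixed_points} for the universal cover). Your observation about the typo $\frak{a}^{\vee}_{\R}\oplus\frak{a}^{\vee}$ versus $\frak{a}^{\vee}_{\R}\times E_{\tau}^d$ is also apt.
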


\begin{theorem}
Suppose	$\mathfrak{M}^u_{\tau}(\alpha,\beta)$ and 	$\mathfrak{M}^u_{\tau}(\alpha',\beta')$ are both orbifolds, then $\mathfrak{M}^u_{\tau}(\alpha,\beta)$ is homeomorphic to $\mathfrak{M}^u_{\tau}(\alpha',\beta')$. 
\end{theorem}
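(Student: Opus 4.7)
The plan is to adapt the gradient flow argument of Theorem~\ref{thm:topological_type} directly to $X_{\vartheta}^n$, bypassing the universal cover. Since $\mathfrak{M}^u_{\tau}(\alpha,\beta)=\mu_{\rK}^{-1}(\alpha,\beta)/\rK_{\R}$ and likewise for $(\alpha',\beta')$, it will suffice to construct a $\rK_{\R}$-equivariant diffeomorphism between the two level sets $\mu_{\rK}^{-1}(\alpha,\beta)$ and $\mu_{\rK}^{-1}(\alpha',\beta')$ inside $X_{\vartheta}^n$, where $\mu_{\rK}=(\mu_{\R,\rK},\mu_{\C,\rK})$.

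First I would use Theorem~\ref{thm:smoothness_ell} to identify the orbifold hypothesis with $(\alpha,\beta)$ and $(\alpha',\beta')$ being regular values of $\mu_{\rK}$, and to conclude that the critical value locus in $\frak{k}_{\R}^{\vee}\times E_{\tau}^k$ has real codimension at least three. Consequently I can choose a smooth embedded path $\ell:[0,1]\to \frak{k}_{\R}^{\vee}\times E_{\tau}^k$ from $(\alpha,\beta)$ to $(\alpha',\beta')$ whose image avoids this locus. Setting $\cM_{\ell}=\mu_{\rK}^{-1}(\ell([0,1]))$ and $F=\ell^{-1}\circ\mu_{\rK}|_{\cM_{\ell}}$ yields a $\rK_{\R}$-invariant function without critical points on a smooth $\rK_{\R}$-invariant submanifold, whose level sets at $t=0,1$ are exactly the level sets I wish to relate.

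Next I would equip $X_{\vartheta}^n$ with the $\rK_{\R}$-invariant Kähler metric $g$ coming from $\omega_{\R}$ and form the conformally rescaled metric $g'=g/(1+\rho)^2$ with $\rho$ denoting $g$-distance from a basepoint in $\cM_{\ell}$. As in Theorem~\ref{thm:topological_type}, the Gliklikh completeness results \cite{Gli97} should then imply that $\nabla_{g'}F$ is a complete $\rK_{\R}$-invariant vector field on $\cM_{\ell}$, whose time-one flow provides the desired $\rK_{\R}$-equivariant diffeomorphism between the two level sets.

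The main obstacle, compared with Theorem~\ref{thm:topological_type}, is the lack of an ambient Euclidean metric: one must replace the restricted Euclidean metric on $\widetilde{X}_{\vartheta}^n\subset\C^{3n}$ by the intrinsic Kähler metric $g$ on $X_{\vartheta}^n$, and therefore reverify the two estimates driving the Gliklikh argument, namely completeness of $g'$ and boundedness of $|\nabla F|_{g'}$. These should follow from properness of the local Kähler potential $\sum_i|z_i|^2+\exp(-2\pi(\Im x_i)^2/\Im\tau)|w_i|^2+|x_i|^2/\Im\tau$ in the $z$- and $w$-directions together with the compactness of $E_{\tau}$ (so that $|\Im x_i|$ is bounded by $\Im\tau/2$ in any standard fundamental domain, keeping the exponential weight on $|w_i|^2$ between fixed positive constants), but carrying out the growth estimates carefully is the technical heart of the argument.
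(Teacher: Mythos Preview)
The paper does not give an explicit proof of this theorem; it simply asserts that it ``follows from the properties for its universal cover'', i.e.\ from Theorem~\ref{thm:topological_type}. Taken literally this is incomplete: the diffeomorphism produced there uses the restriction of the standard Euclidean metric on $\C^{3n}$, which is \emph{not} invariant under the $\Gamma$-action (the automorphy factor $\alpha(\gamma,x)$ is not unitary), so the resulting gradient flow is not $\Gamma^d$-equivariant and does not descend to the quotients $\mathfrak{M}^u_\tau(\alpha,\beta)=\widetilde{\mathfrak{M}}^u_\tau(\alpha,\tilde{\beta})/\Gamma^d$ without further work.

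Your plan is exactly the right way to close this gap. Working directly on $X_\vartheta^n$ with the K\"ahler metric coming from $\omega_\R$ is the same as rerunning the argument on $\widetilde{X}_\vartheta^n$ with the $\Gamma^n$-invariant metric $\widetilde{\omega}_\R$ instead of the Euclidean one; either way the flow becomes $\rK_\R$-equivariant (and implicitly $\Gamma^d$-equivariant upstairs), hence descends. You have correctly identified the only new technical input: completeness of $g$ and the gradient bound must now be checked for this K\"ahler metric rather than the flat one, and your observation that on a fundamental domain $|\Im x_i|$ is bounded so the weight $\exp(-2\pi(\Im x_i)^2/\Im\tau)$ stays between fixed positive constants is the key to reducing these estimates to the Euclidean case.

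One small correction: you should take the gradient $\nabla_g F$ with respect to the invariant metric $g$, not $\nabla_{g'}F$. As in the proof of Theorem~\ref{thm:topological_type}, the rescaled metric $g'=g/(1+\rho)^2$ is only an auxiliary device to certify that $\nabla_g F$ is complete (bounded in $g'$-norm on a $g'$-complete manifold); since $\rho$ depends on a basepoint, $g'$ itself need not be $\rK_\R$-invariant, but $\nabla_g F$ is, because both $g$ and $F$ are.
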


\section{A comparison with the abelian Coulomb branches} \label{sec:BFN}
For a complex reductive group $G$ and $\bfM$ a symplectic $G$-representation, one can associate to the pair $(G,\bfM)$ two holomorphic symplectic varieties $\cM_{H}(G,\bfM)$ and $\cM_{C}(G,\bfM)$, which are the Higgs and Coulomb branches of the moduli space of vacua for the $3$-dimensional $\cN=4$ supersymmetric gauge theory. 
When $\bfM=T^*\bfN$ for a $G$-representation $\bfN$, the Coulomb branch was defined in \cite{BFN16} as an affine variety whose coordinate ring is the equivariant Borel-Moore homology of the variety of triples over the affine Grassmanian of $G$, equipped with a convolution type product. 

When $G=\rK$ is abelian $\bfN\cong \C^n$ is the restriction of the standard representation of $\rT\cong (\C^{\times})^n$ to its subtorus $\rK$ defined by the short exact sequence \eqref{eq:ses3}, the Higgs branch is a hypertoric variety, i.e., we have
\[
\cM_{H}(\rK,T^*\C^n)=\frak{M}^+_u(0,0),
\]
where $\frak{M}^+_u(0,0)$ is defined as in Definition \ref{defn:+_hypertoric}. On the other hand, the Coulomb branch $\cM_{C}(\rK,T^*\C^n)$ is also an additive hypertoric variety. In this section, we will compare the ``coordinate rings'' of elliptic hypertoric varieties with that of additive and multiplicative hypertoric varieties arising from the BFN Coulomb branch construction. 

Let's consider the short exact sequences
\begin{equation}  \label{eq:ses4}
0\longrightarrow \mathfrak{a}^{\vee} \overset{\pi^{\vee}}{\longrightarrow} \mathfrak{t}^{\vee} \overset{\iota^{\vee}}{\longrightarrow} \mathfrak{k}^{\vee} \longrightarrow 0,
\end{equation} 
\begin{equation} \label{eq:ses5}
0\longleftarrow \mathfrak{a}\overset{\pi}{\longleftarrow}\mathfrak{t}\overset{\iota}{\longleftarrow}\mathfrak{k}\longleftarrow 0,
\end{equation}
dual to the sequences \eqref{eq:ses1} and \eqref{eq:ses2}. Exponentiating \eqref{eq:ses4} gives us a short exact sequence of tori dual to \eqref{eq:ses3}
\begin{equation} \label{eq:ses6}
1\longrightarrow \rA^{\vee} \overset{\psi^{\vee}}{\longrightarrow} \rT^{\vee} \overset{\phi^{\vee}}{\longrightarrow} \rK^{\vee}\longrightarrow 1.
\end{equation}
The abelian Coulomb branch $\cM_{C}(\rA^{\vee},T^*\C^n)$ is the same hypertoric variety
\[
\cM_{C}(\rA^{\vee},T^*\C^n)= \mathfrak{M}_u^+(0,0).
\]
The coordinate ring of $\cM_{C}(\rA^{\vee},T^*\C^n)$ has the following explicit presentation (cf. 5(ii) in \cite{Nak15} and Theorem 4.1 in \cite{BFN16}):  

For two integers $\ell, m$, let's set
\begin{equation} \label{eq:delta}
\delta(\ell,m)=\begin{cases*}
0 & \text{if $\ell$ and $m$ have the same sign,}\\
\min(|\ell|,|m|) & \text{if $\ell$ and $m$ have different signs.}
\end{cases*}
\end{equation}
Let $\rho_+:\C[\frak{t}^{\vee}]\to \C[\frak{a}^{\vee}]$ be the map induced by $\pi^{\vee}$, and let
$\zeta_1,\ldots,\zeta_n$ be the generators of $\C[\frak{t}^{\vee}]$. We have 
\begin{equation} \label{eq:+_ring}
\C[\cM_{C}(\rA^{\vee},T^*\C^n)]=\bigoplus_{\lambda\in \frak{a}^{\vee}_{\Z}} \C[\frak{a^{\vee}}] r_+^{\lambda},
\end{equation}
as a vector space, and the multiplication is given by 
\begin{equation} \label{eq:+_relations}
r_+^{\lambda}r_+^{\mu}=r_+^{\lambda+\mu} \rho_+\left( \prod_i^n \zeta_i^{\delta(\lambda_i,\mu_i)}\right),
\end{equation}
where $\lambda_i=\langle \lambda,e_i\rangle$, $\mu_i=\langle \mu,e_i\rangle$, and $\langle -,-\rangle$ is the natural pairing between $\frak{t}$ and $\frak{t}^{\vee}$.  
We note that $\psi^{\vee}$ induces a map between the classifying spaces $B\rA^{\vee}\to B\rT^{\vee}$. Thus, $\pi_+$ can naturally be viewed as the pullback map on cohomology. 
From this viewpoint, $\zeta_i$ are the universal first Chern class in $H^*_{\rT^{\vee}}(\pt)=H^*(B\rT^{\vee})$, i.e., $\zeta_i$ is the first Chern class of a universal line bundle over $B\rT^{\vee}=(\C\bP^{\infty})^n$.


In \cite{BFN16}, the authors also defined the K-theoretic Coulomb branch associated to a pair $(G,T^*\bfN)$ (see also \cite{Tel21}). 
The multiplicative hypertoric variety $\mathfrak{M}_u^{\times}(0,0)$ defined using quasi-Hamiltonian reduction (Definition \ref{defn:*_hypertoric}) coincides with the K-thereotic Coulomb branch associated to $(\rA^{\vee},T^*\C^n)$,  whose coordinate ring is generated by the ring $\C[\rA^{\vee}]$ together with the symbols $r^{\lambda}_{\times}$, 
\begin{equation} \label{eq:*_ring}
\C[\mathfrak{M}_u^{\times}(0,0)]=\bigoplus_{\lambda\in \frak{a}^{\vee}_{\Z}} \C[\rA^{\vee}] r_{\times}^{\lambda},
\end{equation}
subject to the relations
\begin{equation} \label{eq:*_relations}
r^{\lambda}_{\times}r^{\mu}_{\times}=r^{\lambda+\mu}_{\times} \rho_{\times} \left(\prod_i^n (1-t_i)^{\delta(\lambda_i,\mu_i)}\right).
\end{equation}
Here $\rho_{\times}:\C[\rT^{\vee}] \to \C[\rA^{\vee}]$ is the map induced by $\psi^{\vee}$, and $t_1,\ldots,t_n$ are generators of $\C[\rT^{\vee}]$. We note that $t_i$ is the class of a universal line bundle over $B\rT^{\vee}$ and $(1-t_i)$  is its first Chern class in $K_{\rT^{\vee}}(\pt)=K(B\rT^{\vee})$. 

From the above presentations of coordinate rings of additive and multiplicative hypertoric varieties, the most natural generalization to an elliptic analogue would be to replace $\C[\frak{a}^{\vee}]$ and $\C[\rA^{\vee}]$ with the equivariant elliptic cohomology $\mathrm{Ell}_{\rA^{\vee}}(\pt)$ (see Section \ref{sec:Hikita} for the definition), and to replace the pullbacks of $\zeta_i$ and $(1-t_i)$ by classes in $\mathrm{Ell}_{\rA^{\vee}}(\pt)$. However, this does not immediately work, since $\mathrm{Ell}_{\rA^{\vee}}(\pt)$ is defined to be the abelian variety $E_{\tau}^d$. 
To resolve this, we can try to interpret $\mathrm{Ell}_{\rA^{\vee}}(\pt)$ as a graded ring of sections of some line bundles over $E_{\tau}^d$. As we will see momentarily, for suitable choice of line bundles, this does lead to our definition of elliptic hypertoric varieties.

Let $\psi^{\vee}_{\tau}:E_{\tau}^d \to E_{\tau}^n$ be the embedding which fits into the short exact sequence 
\[
1\longrightarrow E_{\tau}^d\overset{\psi^{\vee}_{\tau}}{\longrightarrow} E_{\tau}^n \overset{\phi^{\vee}_{\tau}}{\longrightarrow} E_{\tau}^k \longrightarrow 1.
\]
For $i=1,\ldots,n$, let $\bar{\cL}_i$ be the line bundle over $E_{\tau}^d$ which is the pullback of $\cO(e)$ 
via the map
\[
E_{\tau}^d\overset{\psi^{\vee}_{\tau}}{\longrightarrow} E_{\tau}^n \overset{\mathrm{pr}_i}{\longrightarrow} E_{\tau},
\]
where $\mathrm{pr}_i$ is the projection to the $i$-th component. We denote by $\cL_i$ the pullback of $\bar{\cL}_i$ to $\mathfrak{M}^{\tau}_u(0,0)$ via the moment map $\mu_{\C,\rA}$. We note that the hyperplane $H_{\tau, i}$ in $E_{\tau}^d$ defined by \eqref{eq:ell_hyperplane} is a Cartier divisor for $\cL_i$. 

In order the match with the coordinate rings above, we will consider the $\Z^n$-graded rings $\mathrm{R}(E_{\tau}^d)$ and $\mathrm{R}(\mathfrak{M}^{\tau}_u)$ defined by
\[
\mathrm{R}(E_{\tau}^d) = \bigoplus_{\ell_1,\ldots,\ell_n \ge 0} H^0(E_{\tau}^d, \bar{\cL}_1^{\ell_1} \otimes \ldots \otimes \bar{\cL}_n^{\ell_n}),
\]
and
\[
\mathrm{R}(\mathfrak{M}^{\tau}_u) = \bigoplus_{\ell_1,\ldots,\ell_n \ge 0} H^0(\mathfrak{M}^{\tau}_u, \cL_1^{\ell_1} \otimes \ldots \otimes \cL_n^{\ell_n}).
\]
The the multi-Proj of $\mathrm{R}(E_{\tau}^d)$ gives a toric embedding $E_{\tau}^d \into \bP(2,3,1)^n$ which factors through through $\psi^{\vee}_{\tau}$.\footnote{The line bundle $\cO(e)$ over $E_{\tau}$ gives an embedding of $E_{\tau}$ into the weighted projective space $\bP(2,3,1)$ whose image is a degree $6$ curve of the form
	\[
	Y^2+a_1XYZ+A_3YZ^3=X^3+a_2X^2Y^2+a_2XZ^4+a_6Z^6.
	\]
}
$\mathrm{R}(\mathfrak{M}^{\tau}_u)$ is an algebra over $\mathrm{R}(E_{\tau}^d)$ whose generators and relations can be described as follows.

The map $\psi^{\vee}_{\tau}:E_{\tau}^d \to E_{\tau}^n$  has the form
\[
\psi^{\vee}_{\tau}(y_1,\ldots,y_d)=\left(\sum_{j=1}^d \psi_{1j}y_j, \ldots,\sum_{j=1}^d \psi_{nj}y_j  \right)  
\] 
where $(\psi_{ij})\in \mathrm{Mat}_{n\times d}(\Z)$. For $i=1,\ldots,n$, let $\bar{\vartheta}_i:\C^d\to \C$ be the theta function
\[
\bar{\vartheta}_i(y_1,\ldots,y_d)=\vartheta\left(\sum_{j=1}^d \psi_{ij}y_j \right).
\]
It is easy to see that $\bar{\vartheta}_i\in \mathrm{R}(E_{\tau}^d)$. If we interpret the elliptic classes of $B\rA^{\vee}$ as elements in $\mathrm{R}(E_{\tau}^d)$, then $\bar{\vartheta}_i$ are the pullback of the universal first Chern classes in $\mathrm{Ell}(B\rT^{\vee})$. 

The generators of $\mathrm{R}(\mathfrak{M}^{\tau}_u)$ over $\mathrm{R}(E_{\tau}^d)$ are $\rK$-invariant monomials in variables $z_i$ and $w_i$ for $i=1,\ldots,n$, which are of the form
\begin{equation} \label{eq:invariant_monomial}
\prod_i^n \left(z_i^{\max(\lambda_i,0)}  w_i^{\max(-\lambda_i,0)} \right), 
\end{equation}
for $\lambda\in \frak{a}_{\Z}^{\vee}\subset \frak{t}_{\Z}^{\vee} $. Let's denote the monomial \eqref{eq:invariant_monomial} by the symbol $r^{\lambda}$. Then, we have 
\[
\mathrm{R}(\mathfrak{M}^{\tau}_u) =\bigoplus_{\lambda\in \frak{a}^{\vee}_{\Z}} \mathrm{R}(E_{\tau}^d) r^{\lambda}
\]
and the multiplication is given by
\begin{equation*} 
r^{\lambda}r^{\mu}=r^{\lambda+\mu}\prod_i^n \bar{\vartheta}_i^{\delta(\lambda_i,\mu_i)}.
\end{equation*} 
The $\Z^n$-grading of the symbols $r^{\lambda}$ are determined by $\deg z_i=0$ and $\deg w_i=(\delta_{i1},\ldots,\delta_{in})$. By construction, 

\begin{theorem}
We have 
\[
\mathfrak{M}^{\tau}_u(0,0)=\Spec_{E_{\tau}^d} \widetilde{\mathrm{R}(\mathfrak{M}^{\tau}_u)},
\]
where the RHS denotes the relative spectrum over $E_{\tau}^d$ of the sheaf  $\widetilde{\mathrm{R}(\mathfrak{M}^{\tau}_u)}$ of $\cO_{E_{\tau}^d}$-algebras associated to $\mathrm{R}(\mathfrak{M}^{\tau}_u)$.
\end{theorem}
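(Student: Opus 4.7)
The plan is to show that $\mu_{\C,\rA}:\mathfrak{M}^{\tau}_u(0,0)\to E_{\tau}^d$ is an affine morphism and then to identify the pushforward sheaf $(\mu_{\C,\rA})_{*}\cO_{\mathfrak{M}^{\tau}_u(0,0)}$ with $\widetilde{\mathrm{R}(\mathfrak{M}^{\tau}_u)}$ as sheaves of $\cO_{E_{\tau}^d}$-algebras. Affineness is built in stages: the projection $\mathrm{Tot}(\cO\oplus \cO(e))\to E_{\tau}$ is an affine morphism, so its closed subscheme $X_{\vartheta}=\{zw=\vartheta(x)\}$ is also affine over $E_{\tau}$; products give $X_{\vartheta}^n\to E_{\tau}^n$ affine; base change along the closed embedding $\psi_{\tau}^{\vee}:E_{\tau}^d\hookrightarrow E_{\tau}^n$ yields the affine morphism $(\mu_{\C,\rK})^{-1}(0)\to E_{\tau}^d$; and the fiberwise GIT quotient by $\rK$ with trivial linearization ($\alpha=0$) is the fiberwise affine quotient, preserving affineness. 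Consequently $\mathfrak{M}^{\tau}_u(0,0)=\Spec_{E_{\tau}^d}(\mu_{\C,\rA})_{*}\cO_{\mathfrak{M}^{\tau}_u(0,0)}$, and the theorem reduces to a comparison of sheaves of $\cO_{E_{\tau}^d}$-algebras.

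Next I would compute the pushforward locally by lifting to the universal cover $\widetilde{X}_{\vartheta}^n\subset \C^{3n}$. Over an open $\widetilde U\subset \C^d$ covering $U\subset E_{\tau}^d$, sections of the pushforward are polynomials in $(z_i,w_i)$ over $\cO(\widetilde U)$, subject to $z_iw_i=\vartheta(x_i)$ and required to be $\rK$-invariant. Decomposing under $\rT$ gives monomials of weight $\lambda=(a_i-b_i)_i\in \frak{t}_{\Z}^{\vee}$, and $\rK$-invariance forces $\lambda\in \frak{a}_{\Z}^{\vee}=\ker(\frak{t}_{\Z}^{\vee}\to \frak{k}_{\Z}^{\vee})$. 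Any weight-$\lambda$ monomial reduces, via $z_iw_i=\vartheta(x_i)$, to the canonical form $\prod_i\vartheta(x_i)^{c_i}\cdot r^{\lambda}$ with $r^{\lambda}=\prod_i z_i^{\max(\lambda_i,0)}w_i^{\max(-\lambda_i,0)}$, so each weight-$\lambda$ piece is free of rank one, generated by $r^{\lambda}$. Tracking the $\Gamma^n$-action, $r^{\lambda}$ transforms by $\prod_i\alpha(\gamma_i,x_i)^{\max(-\lambda_i,0)}$, hence after descent and restriction along $\psi_{\tau}^{\vee}$ defines a local section of $\bigotimes_i\bar{\cL}_i^{\max(-\lambda_i,0)}$ on $E_{\tau}^d$, matching the multidegree $(\max(-\lambda_i,0))_i$ in the statement.

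The algebra structure then follows by multiplying monomials and reducing via $z_iw_i=\bar{\vartheta}_i$: the identity $\max(\lambda_i,0)+\max(\mu_i,0)=\max(\lambda_i+\mu_i,0)+\delta(\lambda_i,\mu_i)$ together with its sign-flipped counterpart yields $r^{\lambda}r^{\mu}=r^{\lambda+\mu}\prod_i\bar{\vartheta}_i^{\delta(\lambda_i,\mu_i)}$, which is exactly the product rule defining $\widetilde{\mathrm{R}(\mathfrak{M}^{\tau}_u)}$; passing to $\Spec_{E_{\tau}^d}$ then gives the theorem. The main (though mild) obstacle is the double descent: one must verify that $r^{\lambda}$ remains well-defined after both the $\Gamma^d$-action on $\widetilde{X}^n_{\vartheta}$ and the $\rK$-quotient, and that its automorphy factor produces precisely the line bundle $\bigotimes_i\bar{\cL}_i^{\max(-\lambda_i,0)}$ rather than some shifted or twisted variant. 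This is careful bookkeeping with characters and automorphy factors, but presents no genuine geometric obstacle.
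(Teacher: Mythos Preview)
Your argument is correct and is precisely the detailed verification that the paper omits: in the paper the entire proof is the phrase ``By construction,'' placed immediately after the description of the generators $r^{\lambda}$ and the relation $r^{\lambda}r^{\mu}=r^{\lambda+\mu}\prod_i\bar{\vartheta}_i^{\delta(\lambda_i,\mu_i)}$. Your chain of affineness reductions, the weight decomposition under $\rT$ reducing each $\lambda$-piece to $\cO\cdot r^{\lambda}$ via $z_iw_i=\vartheta(x_i)$, the automorphy-factor bookkeeping giving multidegree $(\max(-\lambda_i,0))_i$, and the elementary identity $\max(\lambda_i,0)+\max(\mu_i,0)=\max(\lambda_i+\mu_i,0)+\delta(\lambda_i,\mu_i)$ are exactly what ``by construction'' is meant to encode, so there is nothing to compare.
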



\section{The Hikita Conjecture} \label{sec:Hikita}
The additive hypertoric varieties $T^*\C^n\threebar \rK$ and $T^*\C^n\threebar \rA^{\vee}$ are dual symplectic singularities in the sense of symplectic duality \cite{BLPW14,BDGH16}. In \cite{Hik17}, Hikita conjectured if $X\to Y$ and $X^{!}\to Y^{!}$ are dual pairs of symplectic resolutions, then there is an isomorphism $H^*(X)\cong \C[(Y^!)^{\rA}]$, where $\rA$ denotes the maximal torus in the Hamiltonian automorphism group of $X$. This was proved for additive hypertoric varieties in the same paper. 
For additive hypertoric varieties and Springer resolutions, the equivariant and quantum extension of Hikita's conjecture was proved in \cite{KMP21} relating the quantized coordinate ring of $Y^!$ with the quantum cohomology of $X$. On the other hand, a $K$-theoretic analogue of \cite{KMP21} was proved in \cite{Zho21}, which relates the quantum $K$-theory of the Higgs branch of a $3d$ $\mathcal{N}=4$ supersymmetric gauge theory with the quantized coordinate ring of its $K$-theoretic Coulomb branch.

In this section, we establish an isomorphism between the $\rK^{\vee}$-equivariant elliptic cohomology 
of a smooth additive hypertoric variety 
$T^*\C^n\threebar_{(\alpha,\beta)} \rA^{\vee}$ 
and the $\rA$-fixed point variety in $X_{\vartheta}^n\twobar \rK$. The later is an $\rA$-equivariant deformation space (of holomorphic Poisson structures) for the elliptic hypertoric variety $\mathfrak{M}^{\tau}_u(0,0)$.

\subsection{Equivariant elliptic cohomology of additive hypertoric vareties}
Let $v=\{v_1,\ldots,v_n\}$ be a collection of primitive vectors in $\frak{k}_{\Z}^{\vee}$. We will assume $v$ is unimodular. Let $\rA^{\vee}$ be the subtorus in $\rT^{\vee}$ determined by $v$ as in \eqref{eq:ses6}. For $(\alpha,\beta)\in \frak{a}_{\R}\oplus \frak{a}$, we denote by $\mathfrak{M}^+_v(\alpha,\beta)$ the additive hypertoric variety
\[
\mathfrak{M}^+_v(\alpha,\beta)= T^*\C^n\threebar_{(\alpha,\beta)} \rA^{\vee}.
\]
We will assume $\mathfrak{M}^+_v(\alpha,\beta)$ to be smooth. Since the topology of $\mathfrak{M}^+_v(\alpha,\beta)$ is independent of $(\alpha,\beta)$ whenever it is smooth (see \cite[Theorem ~6.1]{BD00}), we can set $(\alpha,\beta)=(\alpha,0)$ for generic $\alpha$.

We are interested in a description of the $\rK^{\vee}$-equivariant elliptic cohomology of $\mathfrak{M}^+_v(\alpha,\beta)$, for the residual $\rK^{\vee}$-action. For this purpose, we follow the the treatment of equivariant elliptic cohomology in \cite{AO16, RSVZ19} as it is closest to our setting. 
We will refer to \cite{Gro94,GKV95,Lur09} for more detailed discussions on the subject.

For a torus $\rT=(\C^{\times})^n$, the equivariant elliptic cohomology is a covariant functor 
\[
\mathrm{Ell}_{\rT}: \{\rT\text{ - varieties}\} \longrightarrow \{ \text{schemes}\},
\]
which assigns to a $\rT$-variety $X$ a scheme $\mathrm{Ell}_{\rT}(X)$. In particular, it assigns to a point the abelian variety $E_{\tau}^n$. 
\[
\mathrm{Ell}_{\rT}(\pt)=\rT/q^{\frak{t}_{\Z}}= E_{\tau}^n.  
\]
To the canonical projection $X\to \pt$, it associates the map 
\[
\pi:\mathrm{Ell}_{\rT}(X)\to E_{\tau}^n.
\] 
For each $x\in E_{\tau}^n$, we take a small analytic neighborhood $\mathrm{U}_x$ isomorphic via the exponential map 
\[
\frak{t}= \mathrm{Lie}(E_{\tau})\otimes _{\Z} \frak{t}_{\Z} \longrightarrow E_{\tau}\otimes_{\Z} \frak{t}_{\Z}=E_{\tau}^n
\]
to a small analytic neighborhood in $\frak{t}\cong \C^n$. Let $\cO_{E_{\tau}^n}$ denote the sheaf of holomorphic functions on $E_{\tau}^n$ and let $\mathscr{H}_{\rT}$ be the sheaf of $\cO_{E_{\tau}^n}$-algebras 
\begin{equation*}
\mathscr{H}_{\rT}(\mathrm{U}_x)=H^*_{\rT}(X^{\rT_x})\otimes_{H^*_{\rT}(\pt)} \cO_{\mathrm{U}_x}, 
\end{equation*}
where $\rT_x\subset \rT$ is the subgroup
\begin{equation} \label{eq:T_x}
\rT_x= \bigcap_{\chi(x)=q^{\Z}} \ker \chi, 
\end{equation}
and $\chi$ ranges over all characters
\[
\chi\in \frak{t}_{\Z}\cong \Hom(E_{\tau}^n,E_{\tau}). 
\]
The scheme $\mathrm{Ell}_{\rT}(X)$ is defined to be the relative spectrum \cite{Gro94}
\begin{equation*}
\mathrm{Ell}_{\rT}(X):=\Spec_{E_{\tau}^n} \mathscr{H}_{\rT}. 
\end{equation*}
The fibers of $\pi$ can be described in the following diagram \cite{AO16}:
\begin{equation}
\label{eq:pi}
\begin{tikzcd}
\Spec H^*_{\rT}(X^{\rT_x};\C)  \arrow{d}   &  \pi^{-1}(\mathrm{U}_x)
 \arrow{l} \arrow{r} \arrow{d} & \mathrm{Ell}_{\rT}(X)  \arrow{d}  \\
\C^n    &  \mathrm{U}_x \arrow{l} \arrow{r}   & E_{\tau}^n . 
\end{tikzcd}
\end{equation}

In general, it is difficult to describe the scheme structure of $\mathrm{Ell}_{\rT}(X)$. However, when $X$ is a GKM variety, $\mathrm{Ell}_{\rT}(X)$ has a nice combinatorial description. 

\begin{defn} 
A smooth $\rT$-variety $X$ is called a GKM variety if
\begin{enumerate}

\item  $X$ is equivariantly formal in the sense of \cite{GKM98}, i.e, $H^*_{\rT}(X)$ is a free module over $H^*_{\rT}(\pt)$. 

\item The $\rT$-action on $X$ has finitely many fixed points and $1$-dimensional orbits.

\end{enumerate}
\end{defn}


It is shown in \cite{HH04} that $\mathfrak{M}^+_v(\alpha,\beta)$ is a GKM variety for the $\bm{\rK}:=\C^{\times}_{\hbar}\times \rK^{\vee}$-action, where $\C^{\times}_{\hbar}:=\C^{\times}$ acts on $\mathfrak{M}^+_v(\alpha,\beta)$ by 
\[
t\cdot[\vec{z},\vec{w}]=[\vec{z},t\vec{w}], \quad [\vec{z},\vec{w}]\in \mathfrak{M}^+_v(\alpha,\beta), \quad t\in \C^{\times}. 
\]
However, the $\rK^{\vee}$-action on $\mathfrak{M}^+_v(\alpha,\beta)$ is not GKM.   For example, it has infinitely many $1$-dimensional $\rK^{\vee}$-orbits. Nonetheless, we will find  $\mathrm{Ell}_{\rK^{\vee}}(\mathfrak{M}^+_v(\alpha,\beta))$ to be a closed subscheme of $\mathrm{Ell}_{\bm{\rK}}(\mathfrak{M}^+_v(\alpha,\beta))$. 
By \cite[Propostion ~1]{RSVZ19}, the latter has a relatively simple description in term of the $0$ and $1$-dimensinal $\bm{\rK}$-orbits.  

Now, let's give an explicit description for $\mathrm{Ell}_{\bm{\rK}}(\mathfrak{M}^+_v(\alpha,\beta))$. Let $\hbar,x_1,\ldots,x_n\in \C$ be the additive coordinates on $E_{\tau,\hbar}\times E_{\tau}^{n}$ where the component $E_{\tau,\hbar}:=E_{\tau}$ has the coordinate $\hbar$. Let $\cO(e)_0,\ldots,\cO(e)_n$ be the line bundles over $E_{\tau,\hbar}\times E_{\tau}^{n}$ defined by the theta functions $\vartheta(\hbar)$ and $\vartheta(x_1),\ldots,\vartheta(x_n)$, respectively. Let $\mathrm{R}(E_{\tau,\hbar}\times E_{\tau}^{n})$ the $\Z^{n+1}$-graded ring
\[
\mathrm{R}(E_{\tau,\hbar}\times E_{\tau}^{n})=\bigoplus_{\ell_0,\ldots,\ell_n \ge 0} H^0(E_{\tau,\hbar} \times E_{\tau}^{n}, \cO(e)_0^{\ell_0} \otimes \ldots \otimes \cO(e)_n^{\ell_n}), 
\]

We define a \textit{circuit} in $v$ to be a minimal subset $S\subset\{1,\ldots,n\}$ such that the vectors $v_i$ for $i\in S$ are linearly dependent. Since $\mathfrak{M}^+_v(\alpha,\beta)$ is smooth, we can fix a splitting $S=S^+\coprod S^-$ for each circuit $S$ such that if
\[
\beta_S:=\sum_{i\in S^+} e_i^{\vee}-\sum_{i\in S^-} e_i^{\vee} \in \ker(\frak{t}^{\vee}\to \frak{k}^{\vee}),
\]
then $\langle \beta_S, \hat{\alpha}\rangle \ge 0$, for any lift $\hat{\alpha}\in \frak{t}^{\vee}$ of $\alpha$.

\begin{theorem} \label{thm:ell_hbar_M}	
We have
\begin{equation} 
\mathrm{Ell}_{\bm{\rK}}(\mathfrak{M}^+_v(\alpha,\beta)) =  \Proj  \mathrm{R}(E_{\tau,\hbar} \times E_{\tau}^{n})/\left(\vartheta_S\cdot r\in  \mathrm{R}(E_{\tau,\hbar} \times E_{\tau}^{n})   | r\in \mathrm{R}(E_{\tau,\hbar} \times E_{\tau}^{n}), S\in \mathrm{Circ}(v) \right) ,
\end{equation}
where 
\[
\vartheta_S:= \prod_{i\in S^+} \vartheta(x_i) \prod_{i\in S^-} \vartheta(\hbar-x_i),
\] 
and ``Proj'' denotes the multi-Proj of a $\Z^{n+1}$-graded ring.	
\end{theorem}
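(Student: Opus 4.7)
The plan is to invoke the GKM-style description of equivariant elliptic cohomology from \cite[Proposition~1]{RSVZ19}, which requires first verifying that $\mathfrak{M}^+_v(\alpha,\beta)$ is a GKM variety for the $\bm{\rK}=\C^{\times}_{\hbar}\times \rK^{\vee}$ action. Equivariant formality is the content of \cite{HH04}, while smoothness together with unimodularity of $v$ and the moment-polytope picture for hypertoric varieties ensure that the $\bm{\rK}$-fixed points are isolated and that there are only finitely many $1$-dimensional orbits. This reduces the computation of $\mathrm{Ell}_{\bm{\rK}}(\mathfrak{M}^+_v(\alpha,\beta))$ to a combinatorial one, indexed by $\bm{\rK}$-fixed points together with $1$-dimensional orbits and their tangent weights.

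The next step is to enumerate these data. Fixed points of $\bm{\rK}$ on $\mathfrak{M}^+_v(\alpha,\beta)$ are in bijection with feasible bases $B\subset \{1,\ldots,n\}$ of the matroid on $v=\{v_1,\ldots,v_n\}$ together with a compatible sign choice on the complement $B^c$; for each such datum, the corresponding copy of $\mathrm{Ell}_{\bm{\rK}}(\pt)\cong E_{\tau,\hbar}\times E_\tau^{k}$ sits in the ambient scheme $\mathrm{Ell}_{\rT^{\vee}\times \C^{\times}_{\hbar}}(\pt)=E_{\tau,\hbar}\times E_\tau^{n}$ as the linear subvariety obtained by imposing $x_i=0$ or $x_i=\hbar$ for each $i\in B^{c}$ according to the sign. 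The $1$-dimensional $\bm{\rK}$-orbits correspond to pairs of fixed points that differ by a basis mutation along a single circuit $S\in \mathrm{Circ}(v)$, and an explicit weight calculation on $T^{*}\C^{n}$ (using that $z_{i}$ and $w_{i}$ carry $\rT^{\vee}\times\C^{\times}_{\hbar}$-weights $x_{i}$ and $\hbar-x_{i}$ respectively) shows the associated tangent weights are pulled back from $\vartheta(x_i)$ for $i\in S^+$ and $\vartheta(\hbar-x_i)$ for $i\in S^-$, whose product is exactly $\vartheta_S$.

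Assembling these ingredients, the GKM description realises $\mathrm{Ell}_{\bm{\rK}}(\mathfrak{M}^+_v(\alpha,\beta))$ as the scheme-theoretic union in $E_{\tau,\hbar}\times E_\tau^{n}$ of the components listed above, with gluings controlled by the vanishing of the relevant tangent-weight theta functions. By circuit--cocircuit duality for the matroid on $v$, this union coincides set-theoretically with the zero locus of the ideal $I=(\vartheta_S: S\in \mathrm{Circ}(v))\subset \mathrm{R}(E_{\tau,\hbar}\times E_\tau^{n})$, since a point lies in some basis component precisely when each circuit-product $\vartheta_S$ vanishes on it. Taking multi-Proj of $\mathrm{R}(E_{\tau,\hbar}\times E_\tau^{n})/I$ then yields the stated presentation.

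The principal obstacle is showing that the two scheme structures agree and not merely the underlying reduced subschemes, i.e.\ that $I$ is radical with a primary decomposition matching the component list from GKM gluing. We plan to address this by comparing multigraded Hilbert functions on both sides, using Riemann--Roch for theta functions on $E_\tau^{n}$ in each multidegree, together with the Stanley--Reisner-type combinatorics of circuits already exploited in the additive \cite{KMP21} and multiplicative \cite{Zho21} settings; these arguments should transfer to the elliptic setting by a direct section-counting argument once the theta-function analogue of a regular sequence of circuit relations is identified.
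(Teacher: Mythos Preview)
Your approach is correct and rests on the same two ingredients as the paper's proof: the GKM property of $\mathfrak{M}^+_v(\alpha,\beta)$ for the $\bm{\rK}$-action, and \cite[Proposition~1]{RSVZ19} to pass from ordinary to elliptic equivariant cohomology. The difference is one of efficiency. The paper does not re-enumerate fixed points, $1$-dimensional orbits, and tangent weights; instead it directly invokes \cite[Theorem~3.5]{HH04}, which already gives the presentation
\[
H^*_{\bm{\rK}}(\mathfrak{M}^+_v(\alpha,\beta);\C)\;\cong\;\C[\hbar,x_1,\ldots,x_n]\Big/\Bigl(\textstyle\prod_{i\in S^+} x_i\prod_{i\in S^-}(\hbar-x_i)\ :\ S\in\mathrm{Circ}(v)\Bigr),
\]
and then applies \cite[Proposition~1]{RSVZ19} to replace each copy of $\C^{n+1}$ in the GKM gluing by $E_\tau^{n+1}$, which formally turns the linear relations $x_i$, $\hbar-x_i$ into $\vartheta(x_i)$, $\vartheta(\hbar-x_i)$. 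This shortcut also obviates your final step: you do not need a separate argument (Hilbert functions, radicality of $I$, Stanley--Reisner combinatorics) to match scheme structures, because the scheme-theoretic agreement on the cohomological side is already packaged into \cite[Theorem~3.5]{HH04}, and \cite[Proposition~1]{RSVZ19} transports it verbatim. Your route has the merit of being self-contained, but the paper's is considerably shorter.
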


\begin{proof}
By \cite[Theorem ~3.5]{HH04}, if we glue copies $\C^{n+1}$ according to the $0$ and $1$-dimensinal $\bm{\rK}$-orbits in $\mathfrak{M}^+_v(\alpha,\beta)$, 
the resulting variety is 
\[
\Spec H^*_{\bm{\rK}}(\mathfrak{M}^+_v(\alpha,\beta)  ;\C)   =\Spec \left( \C[\hbar,x_1,\ldots,x_n]/\left(\prod_{i\in S^+} x_i \prod_{i\in S^-} (\hbar-x_i)| S\in \mathrm{Circ}(v) \right) \right).
\]
Then by \cite[Propostion ~1]{RSVZ19} (see also Appendix A in \cite{Ros03}), the same gluing gives $\mathrm{Ell}_{\bm{\rK}}(\mathfrak{M}^+_v(\alpha,\beta))$ if we replace $\C^{n+1}$ with $E_{\tau}^{n+1}$. 
\end{proof}

\begin{theorem} \label{thm:ell_X}
	$\mathrm{Ell}_{\rK^{\vee}}(\mathfrak{M}^+_v(\alpha,\beta))$ is the fiber product 
	\begin{equation*} 
	\begin{tikzcd}
	\mathrm{Ell}_{\bm{\rK}}(\mathfrak{M}^+_v(\alpha,\beta))\times_{E_{\tau,\hbar} \times E_{\tau}^{k}}     E_{\tau}^k \arrow{d}   \arrow{r}  &  \mathrm{Ell}_{\bm{\rK}}(\mathfrak{M}^+_v(\alpha,\beta))
	\arrow{d}  \\
	E_{\tau}^k   \arrow{r}  &  E_{\tau,\hbar} \times E_{\tau}^{k}
	\end{tikzcd}
	\end{equation*}
	in the category of schemes, where $E_{\tau}^k\to E_{\tau,\hbar}\times E_{\tau}^{k}$ is the inclusion to the second component and $\mathrm{Ell}_{\bm{\rK}}(\mathfrak{M}^+_v(\alpha,\beta))\to E_{\tau,\hbar}\times E_{\tau}^{k}$ is the map associated to $\mathfrak{M}^+_v(\alpha,\beta)\to \pt$. 
\end{theorem}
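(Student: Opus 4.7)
The plan is to prove the stronger sheaf-level statement $\iota^*\mathscr{H}_{\bm{\rK}} \cong \mathscr{H}_{\rK^{\vee}}$ on $E_{\tau}^k$, where $\iota \colon E_{\tau}^k \hookrightarrow E_{\tau,\hbar}\times E_{\tau}^k$ is the inclusion to the second factor; this map is induced on elliptic cohomology of a point by the subgroup inclusion $\rK^{\vee}\hookrightarrow \bm{\rK}$, $a\mapsto(1,a)$. Because relative $\Spec$ commutes with base change along $\iota$, this sheaf isomorphism immediately yields the claimed fiber-product description. Write $X := \mathfrak{M}^+_v(\alpha,\beta)$ throughout.

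The first step is to identify the stabilizer $\bm{\rK}_{(0,y)}$ at a point $(0,y) \in \iota(E_{\tau}^k)$. A character of $\bm{\rK}$ has weight $(m_0, m) \in \Z \oplus \Z^k$ and evaluates at $(0,y)$ to $m\cdot y \in E_\tau$, independent of $m_0$. The character $(1,0)$ is always trivial there, forcing the $\C^{\times}_{\hbar}$-factor out of $\bm{\rK}_{(0,y)}$, while the characters with $m\cdot y = 0$ in $E_\tau$ cut $\rK^{\vee}$ down exactly to $\rK^{\vee}_y$ in the sense of \eqref{eq:T_x}. Thus $\bm{\rK}_{(0,y)} = \{1\}\times \rK^{\vee}_y$ and in particular $X^{\bm{\rK}_{(0,y)}} = X^{\rK^{\vee}_y}$. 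On a small analytic neighborhood $U_y \subset E_{\tau}^k$ mapped into $\mathrm{U}_{(0,y)}$, the sheaf pullback then reads
\[
(\iota^*\mathscr{H}_{\bm{\rK}})(U_y) \;=\; H^*_{\bm{\rK}}(X^{\rK^{\vee}_y}) \otimes_{H^*_{\bm{\rK}}(\pt)} \cO_{U_y},
\]
with $H^*_{\bm{\rK}}(\pt) = H^*_{\rK^{\vee}}(\pt)[\hbar]$ acting on $\cO_{U_y}$ through the quotient $\hbar\mapsto 0$. Comparing with the defining formula for $\mathscr{H}_{\rK^{\vee}}(U_y)$, everything reduces to the algebraic base-change identity
\[
H^*_{\bm{\rK}}(X^{\rK^{\vee}_y}) \otimes_{H^*_{\bm{\rK}}(\pt)} H^*_{\rK^{\vee}}(\pt) \;\cong\; H^*_{\rK^{\vee}}(X^{\rK^{\vee}_y}).
\]

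This last isomorphism is the substantive point and is where I expect the main work to lie. It holds provided $X^{\rK^{\vee}_y}$ is $\bm{\rK}$-equivariantly formal: one then has $H^*_{\bm{\rK}}(X^{\rK^{\vee}_y}) \cong H^*(X^{\rK^{\vee}_y}) \otimes_{\C} H^*_{\bm{\rK}}(\pt)$ as $H^*_{\bm{\rK}}(\pt)$-modules, and tensoring down to $H^*_{\rK^{\vee}}(\pt)$ yields $H^*(X^{\rK^{\vee}_y}) \otimes_{\C} H^*_{\rK^{\vee}}(\pt) \cong H^*_{\rK^{\vee}}(X^{\rK^{\vee}_y})$ by the analogous formality for the $\rK^{\vee}$-action. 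The required formality of $X$ for $\bm{\rK}$ is exactly the input used in \cite{HH04}; moreover each component of the fixed subvariety $X^{\rK^{\vee}_y}$ is itself a smooth hypertoric variety, associated to the sub-arrangement of hyperplanes cut out by the vanishing condition $m\cdot y = 0$, so the same formality input applies uniformly in $y$. Globalizing these local isomorphisms and verifying compatibility with the gluing procedure used in the proof of Theorem \ref{thm:ell_hbar_M} via \cite[Prop.~1]{RSVZ19} then produces $\iota^*\mathscr{H}_{\bm{\rK}} \cong \mathscr{H}_{\rK^{\vee}}$ and completes the proof.
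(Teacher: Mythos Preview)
Your proposal is correct and follows essentially the same route as the paper: identify $\bm{\rK}_{(0,y)}=\rK^{\vee}_y$, reduce the sheaf pullback $\iota^*\mathscr{H}_{\bm{\rK}}\cong\mathscr{H}_{\rK^{\vee}}$ to a stalkwise base-change identity, and deduce that identity from equivariant formality of the fixed loci $X^{\rK^{\vee}_y}$ for both $\bm{\rK}$ and $\rK^{\vee}$; then use that relative $\Spec$ commutes with base change. The only substantive difference is how you justify formality. The paper records this as two lemmas (Lemmas~\ref{lem:formal_bk} and~\ref{lem:formal_k}) proved in the style of \cite[Lemma~2.1]{AO16}: the Bia{\l}ynicki-Birula/Morse-theoretic argument shows that $H_*(X^{\rK'})$ is generated by $\bm{\rK}$-invariant (hence also $\rK^{\vee}$-invariant) cycles, and \cite[Corollary~1.3.2]{GKM98} then gives formality directly, with no need to identify $X^{\rK'}$ geometrically. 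Your route instead invokes the fact that each component of $X^{\rK^{\vee}_y}$ is again a smooth hypertoric variety and appeals to \cite{HH04}. This is valid, but note that $\rK^{\vee}_y$ as defined in \eqref{eq:T_x} need not be connected (e.g.\ at torsion points $y$), so the ``fixed loci are hypertoric'' statement you use must be checked for arbitrary closed subgroups, not just subtori; the paper's cycle-generation argument sidesteps this issue entirely.
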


For the proof of Theorem \ref{thm:ell_X}, we need the following lemmas:
\begin{lemma} \label{lem:formal_bk}
	For any subgroup $\bm{\rK}'\subset \bm{\rK}$, we have 
	\[
	H^*_{\bm{\rK}}(\mathfrak{M}^+_v(\alpha,\beta)^{\bm{\rK}'})=H^*(\mathfrak{M}^+_v(\alpha,\beta)^{\bm{\rK}'})\otimes 	H^*_{\bm{\rK}}(\pt). 
	\]
\end{lemma}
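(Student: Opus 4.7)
The plan is to show that the fixed-point locus $Y := \mathfrak{M}^+_v(\alpha,\beta)^{\bm{\rK}'}$ is equivariantly formal for the $\bm{\rK}$-action, which is exactly the claimed identity.

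First, I would describe $Y$ explicitly from the defining presentation $\mathfrak{M}^+_v(\alpha,\beta)=T^*\C^n\threebar_{(\alpha,\beta)}\rA^{\vee}$. The subtorus $\bm{\rK}'$ acts on $T^*\C^n=\C^{2n}$ with each coordinate lying in some weight space, so the fixed locus of $\bm{\rK}'$ in $T^*\C^n$ is a coordinate subspace, and its image under hyperk\"ahler reduction is a disjoint union of smooth hypertoric subvarieties of $\mathfrak{M}^+_v(\alpha,\beta)$, each arising as the hyperk\"ahler quotient of a coordinate subspace of $T^*\C^n$ by the appropriate subtorus of $\rA^{\vee}$. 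In particular, each component is itself a smooth hypertoric variety.

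Next, I would invoke the classical fact that a smooth hypertoric variety has cohomology concentrated in even degrees; this can be seen either from the Bia{\l}ynicki-Birula decomposition induced by a generic one-parameter subgroup of the residual torus, or from the Morse theory of the norm-squared of a generic moment map. Consequently $H^*(Y)$ is concentrated in even degrees, and the Serre spectral sequence of the Borel fibration $Y\to Y\times_{\bm{\rK}}E\bm{\rK}\to B\bm{\rK}$ degenerates at $E_2$ for parity reasons (as in \cite{GKM98}), yielding $\bm{\rK}$-equivariant formality of $Y$.

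Finally, to package this as the stated K\"unneth-type identity, note that since $\bm{\rK}'$ acts trivially on $Y$, the splitting $E\bm{\rK}\simeq E(\bm{\rK}/\bm{\rK}')\times E\bm{\rK}'$ gives
\[
Y\times_{\bm{\rK}}E\bm{\rK}\;\simeq\;\bigl(Y\times_{\bm{\rK}/\bm{\rK}'}E(\bm{\rK}/\bm{\rK}')\bigr)\times B\bm{\rK}',
\]
so combining $(\bm{\rK}/\bm{\rK}')$-equivariant formality with the K\"unneth formula yields
\[
H^*_{\bm{\rK}}(Y)=H^*(Y)\otimes H^*_{\bm{\rK}/\bm{\rK}'}(\pt)\otimes H^*(B\bm{\rK}')=H^*(Y)\otimes H^*_{\bm{\rK}}(\pt).
\]
The main obstacle is the identification of the components of $Y$ as hypertoric varieties, which is combinatorially routine but requires some careful bookkeeping; the Morse-theoretic route (restricting the norm-squared moment map on $T^*\C^n$ to $Y$, whose critical set is exactly the finite $\bm{\rK}$-fixed set of $\mathfrak{M}^+_v(\alpha,\beta)$ with even Morse indices) provides a complementary argument that bypasses component-by-component identification.
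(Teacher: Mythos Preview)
Your approach is essentially the one the paper indicates: the paper omits the proof and refers to \cite[Lemma~2.1]{AO16}, whose argument for Nakajima varieties is exactly to identify the fixed locus of an arbitrary subtorus as a union of varieties whose homology is generated by $\bm{\rK}$-invariant algebraic cycles, and then to invoke \cite[Corollary~1.3.2]{GKM98} to conclude equivariant formality. Your parity/Serre-spectral-sequence formulation is an equivalent packaging of the same mechanism.

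One caveat worth flagging: when $\bm{\rK}'$ has a nontrivial $\C^\times_{\hbar}$-component, the components of $\mathfrak{M}^+_v(\alpha,\beta)^{\bm{\rK}'}$ are \emph{not} in general hypertoric varieties. For instance, the $\C^\times_{\hbar}$-fixed locus of $T^*\bP^{1}$ is $\bP^{1}$, which is not holomorphic symplectic, so it cannot be hypertoric. Thus your assertion that identifying the components as hypertoric is ``combinatorially routine'' is not quite right in this generality. The conclusion you need (vanishing of odd cohomology, or homology generated by invariant algebraic cycles) is nonetheless true for these components, and your Morse/Bia{\l}ynicki--Birula alternative---using that the residual $\bm{\rK}$-action on each component still has isolated fixed points and a semiprojective structure---is the correct way to cover the general case, in line with the AO16 argument the paper cites.
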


The proof for Lemma \ref{lem:formal_bk} is similar to that of \cite[Lemma ~2.1]{AO16} (which proved the same statement for Nakajima's quiver varieties), 
and is hence omitted.

\begin{lemma} \label{lem:formal_k}
	For any subgroup $\rK'\subset \rK^{\vee}$, we have 
	\[
	H^*_{\rK^{\vee}}(\mathfrak{M}^+_v(\alpha,\beta)^{\rK'})=H^*(\mathfrak{M}^+_v(\alpha,\beta)^{\rK'})\otimes 	H^*_{\rK^{\vee}}(\pt). 
	\]
\end{lemma}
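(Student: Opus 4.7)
The plan is to deduce Lemma \ref{lem:formal_k} from Lemma \ref{lem:formal_bk} by a flat base change along the subtorus inclusion $\rK^{\vee}\hookrightarrow \bm{\rK}=\C^{\times}_{\hbar}\times \rK^{\vee}$, whose quotient is $\C^{\times}_{\hbar}$. Setting $Y:=\mathfrak{M}^+_v(\alpha,\beta)^{\rK'}$, I first note that because $\rK'\subset \rK^{\vee}\subset \bm{\rK}$, Lemma \ref{lem:formal_bk} applies with $\bm{\rK}':=\rK'$ and yields
\[
H^*_{\bm{\rK}}(Y)\;\cong\;H^*(Y)\otimes_{\C}H^*_{\bm{\rK}}(\pt).
\]
In particular $H^*_{\bm{\rK}}(Y)$ is a free, hence flat, module over the subring $H^*_{\C^{\times}_{\hbar}}(\pt)=\C[\hbar]$.

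Next, since $\C^{\times}_{\hbar}$ commutes with $\rK^{\vee}$, one has $Y_{\bm{\rK}}=(Y_{\rK^{\vee}})_{\C^{\times}_{\hbar}}$, and so $Y_{\rK^{\vee}}$ fits into the pullback square
\[
\begin{tikzcd}
Y_{\rK^{\vee}} \arrow{d} \arrow{r} & Y_{\bm{\rK}} \arrow{d}\\
\pt \arrow{r} & B\C^{\times}_{\hbar}.
\end{tikzcd}
\]
The Eilenberg--Moore spectral sequence of this square has $E_2$-page $\mathrm{Tor}^{\,\ast}_{H^*(B\C^{\times}_{\hbar})}(H^*_{\bm{\rK}}(Y),\C)$ and converges to $H^*_{\rK^{\vee}}(Y)$; by the flatness established above it degenerates onto its $\mathrm{Tor}^0$-row, giving
\[
H^*_{\rK^{\vee}}(Y)\;\cong\;H^*_{\bm{\rK}}(Y)\otimes_{\C[\hbar]}\C\;=\;H^*_{\bm{\rK}}(Y)/(\hbar).
\]
Combining this with the previous display and the identity $H^*_{\bm{\rK}}(\pt)/(\hbar)=H^*_{\rK^{\vee}}(\pt)$, I obtain
\[
H^*_{\rK^{\vee}}(Y)\;\cong\;\bigl(H^*(Y)\otimes H^*_{\bm{\rK}}(\pt)\bigr)/(\hbar)\;\cong\;H^*(Y)\otimes H^*_{\rK^{\vee}}(\pt),
\]
which is the claimed equivariant formality.

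The only delicate point I foresee is the convergence and degeneration of the Eilenberg--Moore spectral sequence, but this is standard given that the base $B\C^{\times}_{\hbar}\simeq \C\bP^{\infty}$ is simply connected and that $H^*_{\bm{\rK}}(Y)$ is flat over $H^*(B\C^{\times}_{\hbar})$; alternatively one can bypass Eilenberg--Moore entirely by applying the Leray--Hirsch theorem to the fibration $Y_{\rK^{\vee}}\to Y_{\bm{\rK}}\to B\C^{\times}_{\hbar}$. All remaining manipulations are purely algebraic, so I expect this to go through without further complication.
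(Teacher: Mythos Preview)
Your argument is correct: applying Lemma~\ref{lem:formal_bk} with $\bm{\rK}'=\rK'$ gives $\bm{\rK}$-equivariant formality of $Y=\mathfrak{M}^+_v(\alpha,\beta)^{\rK'}$, and then the Eilenberg--Moore spectral sequence for the homotopy pullback square over $B\C^{\times}_{\hbar}$ collapses (by freeness over $\C[\hbar]$) to give $H^*_{\rK^{\vee}}(Y)\cong H^*_{\bm{\rK}}(Y)/(\hbar)\cong H^*(Y)\otimes H^*_{\rK^{\vee}}(\pt)$. One small caveat: your ``Leray--Hirsch alternative'' is not quite a substitute, since Leray--Hirsch requires as input that classes in the total space restrict to a basis of the fibre cohomology, which is essentially what you are trying to establish; the Eilenberg--Moore route (or equivalently the observation that surjectivity of $H^*_{\bm{\rK}}(Y)\to H^*(Y)$ forces surjectivity of $H^*_{\rK^{\vee}}(Y)\to H^*(Y)$) is the clean way.

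The paper takes a genuinely different, more geometric route. Rather than using Lemma~\ref{lem:formal_bk} as a black box and base-changing, it goes back inside the \emph{proof} of \cite[Lemma~2.1]{AO16}: that argument shows $H_*(Y)$ is generated by $\bm{\rK}$-invariant algebraic cycles, which are a fortiori $\rK^{\vee}$-invariant; equivariant formality for $\rK^{\vee}$ then follows directly from the cycle criterion of \cite[Corollary~1.3.2]{GKM98}. Your approach has the advantage of using Lemma~\ref{lem:formal_bk} purely as a statement and is a general mechanism (formality for a torus descends to any subtorus), at the cost of invoking a spectral sequence. The paper's approach avoids any spectral-sequence machinery but requires re-opening the cited proof to extract the stronger ``invariant cycles'' conclusion rather than just the formality statement.
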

\begin{proof}
Since any subgroup $\rK'\subset \rK^{\vee}$ is automatically a subgroup of $\bm{\rK}$ and a $\bm{\rK}$-invariant cycle is also $\rK^{\vee}$-invariant, it follows from the proof of \cite[Lemma ~2.1]{AO16} that $H_*(\mathfrak{M}^+_v(\alpha,\beta)^{\rK'})$ is generated by $\rK^{\vee}$-invariant cycles. By \cite[Corollary ~1.3.2]{GKM98}, $\mathfrak{M}^+_v(\alpha,\beta)^{\rK'}$ is equivariantly formal. 
\end{proof}

\begin{proof}[Proof of Theorem \ref{thm:ell_X}]
	Let $\mathscr{H}_{\rK^{\vee}}$ and $\mathscr{H}_{\bm{\rK}}$ be the sheaves of algebras over $E_{\tau}^k$ and $E_{\tau,\hbar}\times E_{\tau}^{k}$ whose relative spectrum define $\mathrm{Ell}_{\rK^{\vee}}(\mathfrak{M}^+_v(\alpha,\beta))$ and $\mathrm{Ell}_{\bm{\rK}}(\mathfrak{M}^+_v(\alpha,\beta))$, respectively. For a point $a\in E_{\tau}^k\subset E_{\tau,\hbar}\times E_{\tau}^{k}$, 
	we have
	\begin{align*}
	(\mathscr{H}_{\bm{\rK}})_a&=H^*_{\bm{\rK}}(\mathfrak{M}^+_v(\alpha,\beta)^{\bm{\rK}_a})\otimes _{H^*_{\bm{\rK}}(\pt)} \cO_{E_{\tau,\hbar}\times E_{\tau}^{k},a}\\
	&=H^*(\mathfrak{M}^+_v(\alpha,\beta)^{\bm{\rK}_a})\otimes H^*_{\bm{\rK}}(\pt) \otimes _{H^*_{\bm{\rK}}(\pt)} \cO_{E_{\tau,\hbar}\times E_{\tau}^{k},a}\\
	&=H^*(\mathfrak{M}^+_v(\alpha,\beta)^{\bm{\rK}_a})\otimes_{\C} \cO_{E_{\tau,\hbar}\times E_{\tau}^{k},a}
	\end{align*}
	by Lemma \ref{lem:formal_bk}.  We denote by $f^*\mathscr{H}_{\bm{\rK}}$ the pullback of $\mathscr{H}_{\bm{\rK}}$ via the inclusion $f:E_{\tau}^k   \to  E_{\tau,\hbar} \times E_{\tau}^{k}$ to the second component. Then, by Lemma \ref{lem:formal_k} and the fact that $\rK^{\vee}_a=\bm{\rK}_a$ as a subgroup of $\bm{\rK}$, we get
	\begin{align*} 
	(f^*\mathscr{H}_{\bm{\rK}})_a&= H^*(\mathfrak{M}^+_v(\alpha,\beta)^{\bm{\rK}_a})\otimes_{\C} f^{-1}\cO_{E_{\tau,\hbar}\times E_{\tau}^{k},a} \otimes_{f^{-1}\cO_{E_{\tau,\hbar}\times E_{\tau}^{k},a}} \cO_{E_{\tau}^{k},a}\\
	&=H^*(\mathfrak{M}^+_v(\alpha,\beta)^{\bm{\rK}_a}) \otimes _{\C} \cO_{E_{\tau}^{k},a}\\
	&=H^*(\mathfrak{M}^+_v(\alpha,\beta)^{\rK^{\vee}_a}) \otimes _{\C} \cO_{E_{\tau}^{k},a}\\
	&=H^*_{\rK^{\vee}}(\mathfrak{M}^+_v(\alpha,\beta)^{\rK^{\vee}_a})\otimes_{H^*_{\rK^{\vee}}(\pt)} \cO_{E_{\tau}^{k},a}\\
	&=(\mathscr{H}_{\rK^{\vee}})_a.
	\end{align*}
	Since relative spectrum respects pullback, it follows that
	\begin{align*} 
	\mathrm{Ell}_{\rK^{\vee}}(\mathfrak{M}^+_v(\alpha,\beta)) &=\Spec_{E^{\tau}_k} \mathscr{H}_{\rK^{\vee}} 
	=  \Spec_{E^{\tau}_k} f^*\mathscr{H}_{\bm{\rK}} \\
	&= \left (\Spec_{E_{\tau,\hbar}\times E_{\tau}^{k}} \mathscr{H}_{\bm{\rK}} \right) \times_{E_{\tau,\hbar} \times E_{\tau}^{k}}  E_{\tau}^{k} \\
	&=  \mathrm{Ell}_{\bm{\rK}}(\mathfrak{M}^+_v(\alpha,\beta))\times_{E_{\tau,\hbar} \times E_{\tau}^{k}}     E_{\tau}^k. 
	\end{align*}
\end{proof}

We have a similar description for 	$\mathrm{Ell}_{\C^{\times}_{\hbar}}(\mathfrak{M}^+_v(\alpha,\beta))$.

\begin{theorem} \label{thm:ell_cstar_X}
	$\mathrm{Ell}_{\C^{\times}_{\hbar}}(\mathfrak{M}^+_v(\alpha,\beta))$ is the fiber product 
	\begin{equation*} 
	\begin{tikzcd}
	\mathrm{Ell}_{\bm{\rK}}(\mathfrak{M}^+_v(\alpha,\beta))\times_{E_{\tau,\hbar} \times E_{\tau}^{k}}     E_{\tau} \arrow{d}   \arrow{r}  &  \mathrm{Ell}_{\bm{\rK}}(\mathfrak{M}^+_v(\alpha,\beta))
	\arrow{d}  \\
	E_{\tau}   \arrow{r}  &  E_{\tau,\hbar}\times E_{\tau}^{k}
	\end{tikzcd}
	\end{equation*}
	in the category of schemes, where $E_{\tau}\to E_{\tau,\hbar}\times E_{\tau}^{k}$ is the inclusion to the first component and $\mathrm{Ell}_{\bm{\rK}}(\mathfrak{M}^+_v(\alpha,\beta))\to E_{\tau,\hbar}\times E_{\tau}^{k}$ is the map associated to $\mathfrak{M}^+_v(\alpha,\beta)\to \pt$. 
\end{theorem}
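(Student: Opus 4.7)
The plan is to imitate the proof of Theorem~\ref{thm:ell_X} essentially verbatim, with $\C^{\times}_{\hbar}$ playing the role of $\rK^{\vee}$. The key observation is that $\C^{\times}_{\hbar}$, like $\rK^{\vee}$, sits as a subtorus of $\bm{\rK}=\C^{\times}_{\hbar}\times \rK^{\vee}$, so the same sheaf-theoretic identification of stalks should apply once we establish equivariant formality for $\C^{\times}_{\hbar}$-fixed loci.

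First, I would record the analogue of Lemma~\ref{lem:formal_k} for $\C^{\times}_{\hbar}$: for any subgroup $\rK'\subset \C^{\times}_{\hbar}$,
\[
H^*_{\C^{\times}_{\hbar}}(\mathfrak{M}^+_v(\alpha,\beta)^{\rK'})= H^*(\mathfrak{M}^+_v(\alpha,\beta)^{\rK'})\otimes H^*_{\C^{\times}_{\hbar}}(\pt).
\]
The argument is the same as in Lemma~\ref{lem:formal_k}: any such $\rK'$ is automatically a subgroup of $\bm{\rK}$, and by the proof of \cite[Lemma~2.1]{AO16} together with Lemma~\ref{lem:formal_bk}, $H_*(\mathfrak{M}^+_v(\alpha,\beta)^{\rK'})$ is generated by $\bm{\rK}$-invariant (hence $\C^{\times}_{\hbar}$-invariant) cycles, so \cite[Corollary~1.3.2]{GKM98} yields equivariant formality.

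Next I would compute stalks. Denote by $\mathscr{H}_{\C^{\times}_{\hbar}}$ and $\mathscr{H}_{\bm{\rK}}$ the sheaves of algebras over $E_{\tau}$ and $E_{\tau,\hbar}\times E_{\tau}^{k}$ whose relative spectra define the two equivariant elliptic cohomologies, and let $g: E_{\tau}\hookrightarrow E_{\tau,\hbar}\times E_{\tau}^{k}$ be the inclusion into the first factor. For $a\in E_{\tau}\subset E_{\tau,\hbar}\times E_{\tau}^{k}$, the stabilizer subgroup defined by~\eqref{eq:T_x} satisfies $\C^{\times}_{\hbar,a}=\bm{\rK}_a$ as a subgroup of $\bm{\rK}$, because at a point lying in $E_{\tau}\times\{0\}$ the only characters vanishing are those factoring through the first component plus those already trivial. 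Using Lemma~\ref{lem:formal_bk} to unfold $(\mathscr{H}_{\bm{\rK}})_a$, then restricting along $g$, and finally re-applying the new formality lemma, one gets $(g^*\mathscr{H}_{\bm{\rK}})_a\cong (\mathscr{H}_{\C^{\times}_{\hbar}})_a$ exactly as in the proof of Theorem~\ref{thm:ell_X}. Since relative spectra commute with pullback, one concludes
\[
\mathrm{Ell}_{\C^{\times}_{\hbar}}(\mathfrak{M}^+_v(\alpha,\beta))= \Spec_{E_{\tau}} g^*\mathscr{H}_{\bm{\rK}}= \mathrm{Ell}_{\bm{\rK}}(\mathfrak{M}^+_v(\alpha,\beta))\times_{E_{\tau,\hbar}\times E_{\tau}^{k}} E_{\tau}.
\]

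The only delicate point, and the step I would want to double-check carefully, is the identification $\C^{\times}_{\hbar,a}=\bm{\rK}_a$ for $a$ in the first component. This needs a brief verification that the characters of $\bm{\rK}$ vanishing on such a point are precisely the characters of $\C^{\times}_{\hbar}$ vanishing there, which is straightforward from~\eqref{eq:T_x} and the product decomposition $\bm{\rK}=\C^{\times}_{\hbar}\times\rK^{\vee}$. Everything else is a direct transcription of the argument already given for Theorem~\ref{thm:ell_X}.
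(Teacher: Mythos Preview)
Your proposal is correct and follows exactly the paper's approach: the paper simply states that, by the proof of Theorem~\ref{thm:ell_X}, the result follows from the formality lemma $H^*_{\C^{\times}_{\hbar}}(\mathfrak{M}^+_v(\alpha,\beta)^{\mathrm{H}})=H^*(\mathfrak{M}^+_v(\alpha,\beta)^{\mathrm{H}})\otimes \C[\hbar]$ for any subgroup $\mathrm{H}\subset \C^{\times}_{\hbar}$, which is precisely the lemma you record and prove. Your explicit stalk computation and the check that $(\C^{\times}_{\hbar})_a=\bm{\rK}_a$ for $a$ in the first factor are the details implicit in the paper's phrase ``by the proof of Theorem~\ref{thm:ell_X}''.
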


By the proof of Theorem \ref{thm:ell_X}, this follows from the following lemma:

\begin{lemma}
	For any subgroup $\mathrm{H}\subset \C^{\times}_{\hbar}$, we have 
	\[
	H^*_{\C^{\times}_{\hbar}}(\mathfrak{M}^+_v(\alpha,\beta)^{\mathrm{H}})=H^*(\mathfrak{M}^+_v(\alpha,\beta)^{\mathrm{H}})\otimes \C[\hbar]. 
	\]
\end{lemma}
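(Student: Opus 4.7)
The plan is to argue exactly as in the proof of Lemma \ref{lem:formal_k}, with $\C^{\times}_{\hbar}$ playing the role that $\rK^{\vee}$ played there. The only inputs are: (i) $\mathfrak{M}^+_v(\alpha,\beta)$ is a GKM variety for the full torus $\bm{\rK}=\C^{\times}_{\hbar}\times \rK^{\vee}$ (by \cite{HH04}); (ii) the homological criterion for equivariant formality of \cite[Corollary ~1.3.2]{GKM98}; and (iii) the argument of \cite[Lemma ~2.1]{AO16} adapted to our setting.

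First, I would observe that any subgroup $\mathrm{H}\subset \C^{\times}_{\hbar}$ is automatically a subgroup of $\bm{\rK}$, so the fixed locus $\mathfrak{M}^+_v(\alpha,\beta)^{\mathrm{H}}$ makes sense and carries a residual $\bm{\rK}$-action (in particular a $\C^{\times}_{\hbar}$-action). Next, note that every $\bm{\rK}$-invariant cycle on $\mathfrak{M}^+_v(\alpha,\beta)^{\mathrm{H}}$ is in particular $\C^{\times}_{\hbar}$-invariant, since $\C^{\times}_{\hbar}\subset \bm{\rK}$.

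Then I would apply the argument of \cite[Lemma ~2.1]{AO16}: because $\mathfrak{M}^+_v(\alpha,\beta)$ is $\bm{\rK}$-equivariantly formal (being GKM) and its $\bm{\rK}$-action is ``sufficiently rich'' (one-skeleton generates homology via Morse/Bialynicki-Birula type decomposition), the homology $H_*(\mathfrak{M}^+_v(\alpha,\beta)^{\mathrm{H}})$ is generated by $\bm{\rK}$-invariant cycles. By the previous observation these are in particular $\C^{\times}_{\hbar}$-invariant, so $H_*(\mathfrak{M}^+_v(\alpha,\beta)^{\mathrm{H}})$ is generated by $\C^{\times}_{\hbar}$-invariant cycles. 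The criterion of \cite[Corollary ~1.3.2]{GKM98} then gives equivariant formality
\[
H^*_{\C^{\times}_{\hbar}}(\mathfrak{M}^+_v(\alpha,\beta)^{\mathrm{H}})\cong H^*(\mathfrak{M}^+_v(\alpha,\beta)^{\mathrm{H}})\otimes H^*_{\C^{\times}_{\hbar}}(\pt)= H^*(\mathfrak{M}^+_v(\alpha,\beta)^{\mathrm{H}})\otimes \C[\hbar],
\]
as desired.

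There is essentially no obstacle here, since the statement is the precise analogue of the preceding lemma and the proof is verbatim after replacing $\rK^{\vee}$ by $\C^{\times}_{\hbar}$. The only point to check with any care is that the hypothesis of \cite[Lemma ~2.1]{AO16} applies to the fixed locus of a subgroup of the GKM torus (and not only to $\mathfrak{M}^+_v(\alpha,\beta)$ itself), but this is exactly what is used (without extra commentary) in the proof of Lemma \ref{lem:formal_k}, so one may simply invoke it and omit the details.
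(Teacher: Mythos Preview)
Your proposal is correct and is exactly the argument the paper has in mind: the paper omits the proof of this lemma because it is verbatim the proof of Lemma~\ref{lem:formal_k} with $\rK^{\vee}$ replaced by $\C^{\times}_{\hbar}$, which is precisely what you wrote.
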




\subsection{The isomorphism $\mathrm{Ell}_{\rK^{\vee}}(\mathfrak{M}^+_v(\alpha,\beta))\cong (X_{\vartheta}^n\twobar \rK)^{\rA}$}




We now give a description for the $\rA$-fixed points in $X_{\vartheta}^n\twobar \rK$. By an abuse of notations, we again denote by $\cO(e)_i$ be the line bundle over $E_{\tau}^n$ given by the theta function $\vartheta(x_i)$ for $i=1,\ldots,n$. Let $\mathrm{R}(E^n_{\tau})$ be the $\Z^n$-graded algebra
\[
\mathrm{R}(E^n_{\tau})=\bigoplus_{\ell_1,\ldots,\ell_n \ge 0} H^0(E_{\tau}^n, \cO(e)_1^{\ell_1} \otimes \ldots \otimes \cO(e)_n^{\ell_n}).
\]
Then, 

\begin{theorem} \label{thm:fixed_loci}
	We have 
	\begin{equation} \label{eq:fixed_loci_1}
	(X_{\vartheta}^n\twobar \rK)^{\rA}= \Proj \left( \mathrm{R}(E_{\tau}^n)/\left(\prod_{i\in S} \vartheta(x_i) | S\in \mathrm{Circ}(v) \right) \right).
	\end{equation}
\end{theorem}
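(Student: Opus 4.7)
The plan is to extend the graded-algebra presentation of $\mathrm{R}(\mathfrak{M}^{\tau}_u)$ from Section \ref{sec:BFN} to the entire ``total family'' $X_{\vartheta}^n \twobar \rK$ over $E_{\tau}^n$, and then identify the coordinate ring of the $\rA$-fixed locus with the weight-zero quotient of that algebra. The delicate point will be the first step: confirming that the candidate generators exhaust all $\rK$-invariants and that the resulting relative $\Spec$ realizes the full GIT quotient rather than only its stable locus.

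First I would verify, by the same argument used for $\mathrm{R}(\mathfrak{M}^{\tau}_u)$ in Section \ref{sec:BFN}, that the $\mathfrak{a}^{\vee}_{\Z}$-graded $\mathrm{R}(E_{\tau}^n)$-algebra
\[
\widetilde{\mathrm{R}} := \bigoplus_{\lambda \in \mathfrak{a}^{\vee}_{\Z}} \mathrm{R}(E_{\tau}^n)\cdot r^{\lambda}, \qquad r^{\lambda} r^{\mu} = r^{\lambda+\mu} \prod_{i=1}^n \vartheta(x_i)^{\delta(\lambda_i,\mu_i)},
\]
is the ring of $\rK$-invariants on $X_{\vartheta}^n$: the generators $r^{\lambda} = \prod_i z_i^{\max(\lambda_i,0)} w_i^{\max(-\lambda_i,0)}$ exhaust the $\rK$-invariant monomials in the coordinates $z_i,w_i$, and the only relations come from $z_iw_i = \vartheta(x_i)$. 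This identifies $X_{\vartheta}^n \twobar \rK$ with $\Spec_{E_{\tau}^n}$ of the associated sheaf of $\cO_{E_{\tau}^n}$-algebras.

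Next I would observe that $r^{\lambda}$ has $\rT$-weight $\lambda \in \mathfrak{a}^{\vee}_{\Z} \subset \mathfrak{t}^{\vee}_{\Z}$, so the $\mathfrak{a}^{\vee}_{\Z}$-grading on $\widetilde{\mathrm{R}}$ is precisely the $\rA = \rT/\rK$-weight decomposition. The $\rA$-fixed closed subscheme of $X_{\vartheta}^n \twobar \rK$ is therefore cut out by the ideal $\mathcal{J} := (r^{\lambda} : \lambda \neq 0)$, and its coordinate ring is the weight-zero piece $\mathrm{R}(E_{\tau}^n)/\mathcal{J}_0$ with $\mathcal{J}_0 := \mathcal{J} \cap \mathrm{R}(E_{\tau}^n)$. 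Using the identity $r^{\lambda} r^{-\lambda} = \prod_i \vartheta(x_i)^{|\lambda_i|}$, one sees that $\mathcal{J}_0$ is generated by the products $\prod_i \vartheta(x_i)^{|\lambda_i|}$ as $\lambda$ ranges over $\mathfrak{a}^{\vee}_{\Z} \setminus \{0\}$.

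Finally I would match $\mathcal{J}_0$ with the circuit ideal. An element $\lambda \in \mathfrak{a}^{\vee}_{\Z}$ amounts to an integer relation $\sum_i \lambda_i v_i = 0$ in $\mathfrak{k}^{\vee}$, so the minimal supports of nonzero such $\lambda$ are exactly the circuits $S \in \mathrm{Circ}(v)$; hence each generator $\prod_i \vartheta(x_i)^{|\lambda_i|}$ is divisible by some $\prod_{i \in S}\vartheta(x_i)$. Conversely, the standing smoothness hypothesis on $\mathfrak{M}^+_v$ implies that $v$ is unimodular, which forces the primitive dependence on each circuit $S$ to have coefficients $\pm 1$; this produces a $\lambda \in \mathfrak{a}^{\vee}_{\Z}$ with $\mathrm{supp}(\lambda) = S$ and $|\lambda_i|=1$, so $\prod_{i\in S}\vartheta(x_i) \in \mathcal{J}_0$. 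Combining yields $\mathcal{J}_0 = (\prod_{i\in S}\vartheta(x_i) : S \in \mathrm{Circ}(v))$, and \eqref{eq:fixed_loci_1} follows on taking $\Proj$.
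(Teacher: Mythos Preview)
Your proof is correct and follows essentially the same route as the paper's own argument: identify the $\rK$-invariant algebra as $\bigoplus_{\lambda\in\mathfrak{a}^{\vee}_{\Z}} \mathrm{R}(E_{\tau}^n)\,r^{\lambda}$ with the stated multiplication, pass to the $\rA$-fixed locus by killing the nonzero-weight generators (the paper phrases this as taking the $\rA$-coinvariant ring, which amounts to the same thing), observe that the resulting ideal in $\mathrm{R}(E_{\tau}^n)$ is generated by the products $r^{\lambda}r^{-\lambda}=\prod_i\vartheta(x_i)^{|\lambda_i|}$, and then use unimodularity of $v$ to match this with the circuit ideal. The only cosmetic difference is that the paper writes the exponents as $\delta(\lambda_i,-\lambda_i)$ rather than $|\lambda_i|$, and spells out the divisibility argument in the last step slightly more explicitly.
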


\begin{proof}
	The proof is similar to that of Theorem B.2 in \cite{Hik17}. 
	Let $\cL_i$ be the line bundle over $X_{\vartheta}^n$ given by the
	pullback of the line bundle $\cO(e)$ over $E_{\tau}$ via the composition
	\[
	X_{\vartheta}^n\overset{\mu_{\C}}{\longrightarrow}E_{\tau}^n\overset{\mathrm{pr}_i}{\longrightarrow} E_{\tau}.
	\]
	Let $\mathrm{R}(X_{\vartheta}^n)$ be the $\Z^n$-graded ring
	\[
	\mathrm{R}(X_{\vartheta}^n)=\bigoplus_{\ell_1,\ldots,\ell_n \ge 0} H^0(X_{\vartheta}^n, \cL_1^{\ell_1} \otimes \ldots \otimes \cL_n^{\ell_n}).
	\]
	$\mathrm{R}(X_{\vartheta}^n)$ is an algebra over $\mathrm{R}(E_{\tau}^n)$ with generators $z_i$, $w_i$ for $i=1,\ldots,n$, and subject to the relations 
	\[
	z_iw_i=\vartheta(x_i). 
	\]
	The grading of the generators are given by $\deg z_i=0$ and $\deg w_i=(\delta_{i1},\ldots,\delta_{in})$. By construction, $X_{\vartheta}^n$ is the relative spectrum over $E_{\tau}^n$ of the sheaf of $\cO_{E_{\tau}^n}$-algebras  $\widetilde{\mathrm{R}(X_{\vartheta}^n)}$ associated to $\mathrm{R}(X_{\vartheta}^n)$. 
	
	For the $\rK$-action on $\mathrm{R}(X_{\vartheta}^n)$, the $\rK$-invariant subring $\mathrm{R}(X_{\vartheta}^n)^{\rK}$ is an algebra over $\mathrm{R}(E_{\tau}^n)$ with generators the $\rK$-invariant monomials, which are of the form
	\[
	r^{\lambda}:=\prod_i^n \left(z_i^{\max(\lambda_i,0)}  w_i^{\max(-\lambda_i,0)} \right), \qquad \lambda \in \frak{a}^{\vee}_{\Z}\subset \frak{t}^{\vee}_{\Z} ,  
	\] 
	and subject to the relations
	\begin{equation*} 
	r^{\lambda}r^{\mu}=r^{\lambda+\mu}\prod_i^n \vartheta_i^{\delta(\lambda_i,\mu_i)}, 
	\end{equation*} 
	where $\delta(\lambda_i,\mu_i)$ is defined by \eqref{eq:delta}. The relative spectrum of $\widetilde{\mathrm{R}(X_{\vartheta}^n)^{\rK}}$ over $E_{\tau}^n$ gives $X_{\vartheta}^n\twobar \rK$.
	
	Now, let's consider the $\rA$-action on $\mathrm{R}(X_{\vartheta}^n)^{\rK}$ induced by the residual $\rA$-action on $X_{\vartheta}^n\twobar \rK$. The $\rA$-coinvariant ring $(\mathrm{R}(X_{\vartheta}^n)^{\rK})_{\rA}$ is defined by
	\[
	(\mathrm{R}(X_{\vartheta}^n)^{\rK})_{\rA}=\mathrm{R}(X_{\vartheta}^n)^{\rK}/\left(  r^{\lambda}-t\cdot r^{\lambda}| t\in \rA, \lambda \in \frak{a}_{\Z}^{\vee}  \right).
	\]
	It is easy to see that all the generators $r^{\lambda}$ are contained in the ideal. Thus, we have an ring isomorphism
	\begin{equation} \label{eq:fixed_loci_2}
	(\mathrm{R}(X_{\vartheta}^n)^{\rK})_{\rA}=\mathrm{R}(E_{\tau}^n)/\left( \prod_i^n \vartheta(x_i)^{\delta(\lambda_i,-\lambda_i)}| \lambda\in \frak{a}_{\Z}^{\vee}  \right).
	\end{equation}
	Since we have
	\begin{equation} 
	(X_{\vartheta}^n\twobar \rK)^{\rA} =\Proj(\mathrm{R}(X_{\vartheta}^n)^{\rK})_{\rA},
	\end{equation}
	it remains to show that $(\mathrm{R}(X_{\vartheta}^n)^{\rK})_{\rA}$ is isomorphic to the ring in the RHS of \eqref{eq:fixed_loci_1}. 
	
	Let $S\in \mathrm{Circ}(v)$ be a circuit. There is a relation $\sum_{i\in S} a_i v_i =0$, where $a_i\in \Z$ is nonzero. Since $v$ is unimodular, we can assume $a_i=\pm 1$. We set $a_i=0$ for $i\notin S$ and let $\vec{a}\in \frak{t}_{\Z}$ be the vector $\vec{a}=(a_1,\ldots,a_n)$. Since $\vec{a}\in \ker(\frak{t}_{\Z}^{\vee}\to \frak{k}_{\Z}^{\vee})=\Im(\frak{a}_{\Z}^{\vee}\to\frak{t}_{\Z}^{\vee})$, we have $r^{\vec{a}}r^{-\vec{a}}=\prod_{i\in S} \vartheta(x_i)$ in $\mathrm{R}(X_{\vartheta}^n)^{\rK}$ and $\prod_{i\in S} \vartheta(x_i)=0$ in $(\mathrm{R}(X_{\vartheta}^n)^{\rK})_{\rA}$. Conversely, if a monomial $\prod_{i=1}^n \vartheta(x_i)^{b_i}$ vanishes in $(\mathrm{R}(X_{\vartheta}^n)^{\rK})_{\rA}$, then we must have 
	\[
	\prod_{i=1}^n \vartheta(x_i)^{b_i}=r^{\lambda} r^{-\lambda} \prod_{i=1}^n \vartheta(x_i)^{c_i},
	\]
	for some $(c_1,\ldots,c_n)\in \Z^n$ and $\lambda \in \frak{a}_{\Z}^{\vee}$ with $\lambda \ne 0$. 
	This means the set $\{i \mid c_i\ne 0\}$ is contained in $\{i \mid \lambda_i\ne 0\}$. Since $\sum_{i=1}^n \lambda_i v_i=0$, the latter set is generated by circuits in $\mathrm{Circ}(v)$. Hence $\prod_{i=1}^n \vartheta(x_i)^{b_i}$ is contained in the ideal generated by $\prod_{i\in S}\vartheta(x_i)$ for $S\in \mathrm{Circ}(v)$. It follows that 
	\[
	\mathrm{R}(E_{\tau}^n)/\left(\prod_{i\in S} \vartheta(x_i) | S\in \mathrm{Circ}(v) \right) \cong \mathrm{R}(E_{\tau}^n)/\left( \prod_i^n \vartheta(x_i)^{\delta(\lambda_i,-\lambda_i)}| \lambda\in \frak{a}_{\Z}^{\vee}  \right).
	\]
\end{proof}

It follows from \ref{thm:ell_X} and Theorem \ref{thm:fixed_loci} that

\begin{corollary} \label{cor:Hikita}
	There is an isomorphism
	\begin{equation*}
	\mathrm{Ell}_{\rK^{\vee}}(\mathfrak{M}^+_v(\alpha,\beta))\cong (X_{\vartheta}^n\twobar \rK)^{\rA}.
	\end{equation*}
\end{corollary}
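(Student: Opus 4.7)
The plan is to combine Theorem \ref{thm:ell_X} with Theorem \ref{thm:ell_hbar_M} to obtain an explicit Proj presentation of the left-hand side, and then match it with the presentation of the right-hand side furnished by Theorem \ref{thm:fixed_loci}. The whole argument reduces the corollary to the observation that the Jacobi theta function is odd.

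First I would apply Theorem \ref{thm:ell_X} to identify $\mathrm{Ell}_{\rK^{\vee}}(\mathfrak{M}^+_v(\alpha,\beta))$ with the fiber product of $\mathrm{Ell}_{\bm{\rK}}(\mathfrak{M}^+_v(\alpha,\beta))$ with $E_{\tau}^k$ over $E_{\tau,\hbar}\times E_{\tau}^k$, where the inclusion is the identity element in the $E_{\tau,\hbar}$-factor (i.e.\ $\hbar = 0$). Next I would substitute the multi-Proj description from Theorem \ref{thm:ell_hbar_M}. Since the line bundles $\cO(e)_i$ for $i\ge 1$ are pulled back from $E_{\tau}^n$, while $\cO(e)_0$ is pulled back from $E_{\tau,\hbar}$, the base change along $\{\hbar = 0\}\hookrightarrow E_{\tau,\hbar}$ amounts to specializing the graded ring $\mathrm{R}(E_{\tau,\hbar}\times E_{\tau}^n)$ to $\mathrm{R}(E_{\tau}^n)$ and specializing each relation $\vartheta_S$ to its value at $\hbar = 0$.

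The remaining step is then purely algebraic. Using that $\vartheta$ is odd, which follows from its product formula by the substitution $t\mapsto t^{-1}$ (the factor $t^{1/2}-t^{-1/2}$ flips sign while the infinite product is symmetric), we obtain
\[
\vartheta_S|_{\hbar = 0} \;=\; \prod_{i\in S^+}\vartheta(x_i)\prod_{i\in S^-}\vartheta(-x_i) \;=\; (-1)^{|S^-|}\prod_{i\in S}\vartheta(x_i).
\]
Since the overall sign is a unit, the ideal generated by $\vartheta_S|_{\hbar = 0}$ for $S\in \mathrm{Circ}(v)$ coincides with the ideal $\bigl(\prod_{i\in S}\vartheta(x_i)\bigr)_{S\in\mathrm{Circ}(v)}$ appearing in Theorem \ref{thm:fixed_loci}. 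Taking Proj on both sides then yields the claimed isomorphism.

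The one bookkeeping obstacle is verifying that the fiber product in Theorem \ref{thm:ell_X} is actually compatible with the multi-Proj construction in the way described, i.e.\ that multi-Proj commutes with base change along $\{\hbar = 0\}\hookrightarrow E_{\tau,\hbar}$. This follows from general facts about relative Proj over a smooth base, together with the pullback structure of the line bundles $\cO(e)_i$. Apart from this routine compatibility, the corollary really is just the single algebraic observation that $\vartheta$ is an odd function.
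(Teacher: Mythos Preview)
Your proposal is correct and follows essentially the same route as the paper, which simply asserts that the corollary follows from Theorem~\ref{thm:ell_X} and Theorem~\ref{thm:fixed_loci}. You have made explicit the intermediate step the paper leaves tacit: one must feed the multi-Proj description of Theorem~\ref{thm:ell_hbar_M} into the fiber-product description of Theorem~\ref{thm:ell_X}, restrict to $\hbar=0$, and then use the oddness of $\vartheta$ to match the resulting ideal $\bigl(\vartheta_S|_{\hbar=0}\bigr)_S$ with the ideal $\bigl(\prod_{i\in S}\vartheta(x_i)\bigr)_S$ of Theorem~\ref{thm:fixed_loci}.
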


\begin{remark}
	Hikita's conjecture was extended by Nakajima to an isomorphism between $H^*_{\C^{\times}_{\hbar}}(X^!)$ and the $B$-algebra of the quantized coordinate ring of $X$ (see \cite{KTWWY19}).  
	Therefore, we expect $\mathrm{Ell}_{\bm{\rK}}(\mathfrak{M}^+_v(\alpha,\beta))$ and $\mathrm{Ell}_{C^{\times}_{\hbar}}(\mathfrak{M}^+_v(\alpha,\beta))$ to correspond to the quantization of $X_{\vartheta}^n\twobar \rK$ and $\mathfrak{M}^{\tau}_u(0,0)$, respectively. This is an interesting problem in its own right and we hope to study it in a future work. 
\end{remark}

\appendix
\section{Additive and Multiplicative hypertoric varieties} \label{sec:appendix}
We give a brief review of additive and multiplicative hypertoric varieties in this appendix. We will assume the same combinatorial input as given in the beginning of subsection \ref{sec:ell_hypertoric}.
\subsection{Additive hypertoric varieties}
Let's consider the $T^*\C^n$ equipped with the standard hyperkähler structure. We have a Kähler form
\begin{equation*} 
\omega_{\R}^+=\frac{\sqrt{-1}}{2}\sum_{i=1}^n (dz_i\wedge d\bar{z}_i+dw_i\wedge d\bar{w}_i),
\end{equation*}
and holomorphic symplectic form 
\[
\omega_{\C}^+=\sum_{i=1}^n dz_i\wedge dw_i.
\]
Here $z_i$ and $w_i$ are the base and fiber coordinates, respectively. The action of $\rT$ (resp. the compact real torus $\rT_{\R}$ in $\rT$) on $T^*\C^n$ defined by
\begin{equation} \label{eq:T_action}
\vec{t}\cdot(\vec{z},\vec{w})=(t_1z_1,t_1^{-1}w_1,\ldots,t_nz_n,t_n^{-1}w_n), 
\end{equation}
is Hamiltonian with respect to $\omega_{\C}^+$ (resp. $\omega_{\R}^+$). The hyperkähler moment map 
\[
(\mu_{\R,\rK}^+,\mu_{\C,\rK}^+):T^*\C^n \to \mathfrak{k}_{\R}^{\vee}\oplus\mathfrak{k}^{\vee}
\]
for the action of the subtorus $K\subset T^n$ is given by
\[
\mu_{\R,\rK}^+ (\vec{z},\vec{w})=\frac{1}{2}\sum_{i=1}^n(|z_i|^2-|w_i|^2)\iota^{\vee}e_i^{\vee},
\] 
\[
\mu_{\C,\rK}^+ (\vec{z},\vec{w})=\sum_{i=1}^n (z_iw_i)\iota^{\vee}e_i^{\vee}.
\]

\begin{defn}[Additive hypertoric varieties]\label{defn:+_hypertoric}
	Given a collection $u=\{u_1,\ldots,u_n\}$ of primitive vectors in $\mathfrak{a}_{\Z}$, and a choice of parameters $(\alpha,\beta)\in \mathfrak{k}_{\R}^{\vee}\oplus \mathfrak{k}^{\vee}$,
	we defined the associated (additive) hypertoric variety $\mathfrak{M}^+_u(\alpha,\beta)$ to be the (possibly singular) hyperkähler quotient 
	\[
	\mathfrak{M}_u^+(\alpha,\beta)= T^*\C^n\threebar_{(\alpha,\beta)}\rK :=(\mu_{\R,\rK}^+,\mu_{\C,\rK}^+)^{-1}(\alpha,\beta)/\rK_{\R}
	\]
\end{defn}

Equivalently, as a holomorphic symplectyc variety, we can define $\mathfrak{M}_u^+(\alpha,\beta)$ to as the GIT quotient
\[
\mathfrak{M}_u^+(\alpha,\beta)=(\mu_{\C,\rK}^+)^{-1}(\beta)\twobar_{\alpha} \rK. 
\]


\subsection{Multiplicative hypertoric varierties} \label{sec_2.2}
For multiplicative hypertoric varieties, 
we consider the space
\[
(T^*\mathbb{C}^{n})^{\circ}=\{(z,w)\in T^*\mathbb{C}^n|1-z_iw_i \ne 0, i=1,\ldots,n\}.
\]
equipped with the holomorphic symplectic form
\[
\omega_{\C}^{\times}=\sum_i^n \cfrac{dz_i\wedge dw_i}{1-z_iw_i},
\] 
and the Kähler form $\omega_{\R}^{\times}$ which is the restriction of $\omega_{\R}^{+}$  to $(T^*\mathbb{C}^{n})^{\circ}$.

$(T^*\mathbb{C}^{n})^{\circ}$ is a $\rT$-invariant open subset of $\rT$-action on $T^*\C^n$ defined in \eqref{eq:T_action}. Moreoever, the $\rT$-action on $(T^*\mathbb{C}^{n})^{\circ}$ is quasi-Hamiltonian with respect to $\omega_{\C}^{\times}$, and there is a torus-valued moment map $\mu_{\C}^{\times}:(T^*\mathbb{C}^{n})^{\circ}\to \rT$ 
\[
\mu_{\C}^{\times}(\vec{z},\vec{w})= (1-z_1w_1,\ldots,1-z_nw_n). 	
\] 
The inclusion map $\phi:\rK\to \rT$ for the subtorus $\rK$ has the form \eqref{eq:K}. We define its transpose $\phi^{\vee}_{\tau}: \rT\to \rK$ by
\[
\phi^{\vee}_{\tau}(t_1,\ldots,t_n) = \left(\prod_{i=1}^n t_i^{\phi_{i1}},\ldots,\prod_{i=1}^n t_i^{\phi_{ik}}\right).
\]
The $\rK$-action on $(T^*\mathbb{C}^{n})^{\circ}$ 
is quasi-Hamiltonian with the torus-valued moment map $\mu_{\C,\rK}^{\times}$ given by $\mu_{\C,\rK}^{\times}=\phi^{\vee}_{\tau}\circ \mu_{\C}^{\times}$.

\begin{defn}[Multiplicative hypertoric varieties] \label{defn:*_hypertoric}
	Given a collection $u=\{u_1,\ldots,u_n\}$ of primitive vectors in $\mathfrak{a}_{\Z}$, and a choice of parameters $(\alpha,\beta)\in \mathfrak{k}_{\R}^{\vee}\times \rK$,
	we defined the associated multiplicative hypertoric variety $\mathfrak{M}_u^{\times}(\alpha,\beta)$ either by the symplectic quotient
	\[
	\mathfrak{M}_u^{\times}(\alpha,\beta):=(\mu_{\R,\rK}^{\times},\mu_{\C,\rK}^{\times})(\alpha,\beta)/\rK_{\R},
	\]
	or the GIT quotient 
	\[
	\mathfrak{M}_u^{\times}(\alpha,\beta):= (\mu_{\C,\rK}^{\times})^{-1} (\beta)\twobar_{\alpha} \rK.
	\]
\end{defn}

\bibliographystyle{amsalpha}
\bibliography{geometry}	
\end{document}